\begin{document}

\theoremstyle{plain}
\newtheorem{Thm}{Theorem}[section]
\newtheorem{TitleThm}[Thm]{}
\newtheorem{Corollary}[Thm]{Corollary}
\newtheorem{Proposition}[Thm]{Proposition}
\newtheorem{Lemma}[Thm]{Lemma}
\newtheorem{Conjecture}[Thm]{Conjecture}
\theoremstyle{definition}
\newtheorem{Definition}[Thm]{Definition}
\theoremstyle{definition}
\newtheorem{Example}[Thm]{Example}
\newtheorem{TitleExample}[Thm]{}
\newtheorem{Remark}[Thm]{Remark}
\newtheorem{SimpRemark}{Remark}
\renewcommand{\theSimpRemark}{}

\numberwithin{equation}{section}

\newcommand{\C}{{\mathbb C}}
\newcommand{\Q}{{\mathbb Q}}
\newcommand{\R}{{\mathbb R}}
\newcommand{\Z}{{\mathbb Z}}
\newcommand{\mbS}{{\mathbb S}}
\newcommand{\mbU}{{\mathbb U}}
\newcommand{\mbO}{{\mathbb O}}
\newcommand{\mbG}{{\mathbb G}}
\newcommand{\mbH}{{\mathbb H}}

\newcommand{\flushpar}{\par \noindent}

\newcommand{\proj}{{\rm proj}}
\newcommand{\coker}{{\rm coker}\,}
\newcommand{\Sol}{{\rm Sol}}
\newcommand{\supp}{{\rm supp}\,}
\newcommand{\codim}{{\operatorname{codim}}}
\newcommand{\sing}{{\operatorname{sing}}}
\newcommand{\Tor}{{\operatorname{Tor}}}
\newcommand{\Hom}{{\operatorname{Hom}}}
\newcommand{\wt}{{\operatorname{wt}}}
\newcommand{\rk}{{\operatorname{rk}}}
\newcommand{\dlog}{{\operatorname{Derlog}}}
\newcommand{\Olog}[2]{\Omega^{#1}(\text{log}#2)}
\newcommand{\produnion}{\cup \negmedspace \negmedspace 
\negmedspace\negmedspace {\scriptstyle \times}}
\newcommand{\pd}[2]{\dfrac{\partial#1}{\partial#2}}

\def \ba {\mathbf {a}}
\def \bb {\mathbf {b}}
\def \bc {\mathbf {c}}
\def \bd {\mathbf {d}}
\def \bone {\boldsymbol {1}}
\def \bg {\mathbf {g}}
\def \bG {\mathbf {G}}
\def \bh {\mathbf {h}}
\def \bi {\mathbf {i}}
\def \bj {\mathbf {j}}
\def \bk {\mathbf {k}}

\def \bm {\mathbf {m}}
\def \bn {\mathbf {n}}
\def \bt {\mathbf {t}}
\def \bu {\mathbf {u}}
\def \bv {\mathbf {v}}
\def \bV {\mathbf {V}}
\def \bx {\mathbf {x}}
\def \bw {\mathbf {w}}
\def \b1 {\mathbf {1}}
\def \bga {\boldsymbol \alpha}
\def \bgb {\boldsymbol \beta}
\def \bgg {\boldsymbol \gamma}

\def \itc {\text{\it c}}
\def \ite {\text{\it e}}
\def \ith {\text{\it h}}
\def \iti {\text{\it i}}
\def \itj {\text{\it j}}
\def \itm {\text{\it m}}
\def \itM {\text{\it M}} 
\def \itn {\text{\it n}}
\def \ithn {\text{\it hn}}
\def \itt {\text{\it t}}

\def \cA {\mathcal{A}}
\def \cB {\mathcal{B}}
\def \cC {\mathcal{C}}
\def \cD {\mathcal{D}}
\def \cE {\mathcal{E}}
\def \cF {\mathcal{F}}
\def \cG {\mathcal{G}}
\def \cH {\mathcal{H}}
\def \cK {\mathcal{K}}
\def \cL {\mathcal{L}}
\def \cM {\mathcal{M}}
\def \cN {\mathcal{N}}
\def \cO {\mathcal{O}}
\def \cP {\mathcal{P}}
\def \cS {\mathcal{S}}
\def \cT {\mathcal{T}}
\def \cU {\mathcal{U}}
\def \cV {\mathcal{V}}
\def \cW {\mathcal{W}}
\def \cX {\mathcal{X}}
\def \cY {\mathcal{Y}}
\def \cZ {\mathcal{Z}}

\def \ga {\alpha}
\def \gb {\beta}
\def \gg {\gamma}
\def \gd {\delta}
\def \ge {\epsilon}
\def \gevar {\varepsilon}
\def \gk {\kappa}
\def \gl {\lambda}
\def \gs {\sigma}
\def \gt {\tau}
\def \gw {\omega}
\def \gz {\zeta}
\def \gG {\Gamma}
\def \gD {\Delta}
\def \gL {\Lambda}
\def \gS {\Sigma}
\def \gW {\Omega}

\def \dim {{\rm dim}\,}
\def \mod {{\rm mod}\;}
\def \rank {{\rm rank}\,}

\newcommand{\ds}{\displaystyle}
\newcommand{\vf}{\vspace{\fill}}
\newcommand{\vect}[1]{{\bf{#1}}}
\def\R{\mathbb R}
\def\C{\mathbb C}
\def\CP{\mathbb{C}P}
\def\RP{\mathbb{R}P}
\def\N{\mathbb N}

\def\Sym{\mathrm{Sym}}
\def\Sk{\mathrm{Sk}}
\def\GL{\mathrm{GL}}
\def\Diff{\mathrm{Diff}}
\def\id{\mathrm{id}}
\def\Pf{\mathrm{Pf}}
\def\sll{\mathfrak{sl}}
\def\g{\mathfrak{g}}
\def\h{\mathfrak{h}}
\def\k{\mathfrak{k}}
\def\t{\mathfrak{t}}
\def\OcN{\mathscr{O}_{\C^N}}
\def\Ocn{\mathscr{O}_{\C^n}}
\def\Ocm{\mathscr{O}_{\C^m}}
\def\Ocnz{\mathscr{O}_{\C^n,0}}
\def\Derlog{\mathrm{Derlog}\,}
\def\expdeg{\mathrm{exp\,deg}\,}

\title[Schubert Decomposition for Milnor Fibers]
{Schubert Decomposition for Milnor Fibers of the Varieties of Singular 
Matrices}
\author[James Damon]{James Damon$^1$}

\thanks{(1) Partially supported by the National Science Foundation grant 
DMS-1105470}
\address{Department of Mathematics, University of North Carolina, Chapel 
Hill, NC 27599-3250, USA
}

\keywords{varieties of singular matrices, global Milnor fibration, 
classical symmetric spaces, Cartan Model, Cartan conjugacy, 
pseudo-rotations, ordered symmetric and skew-symmetric factorizations, 
Schubert decomposition, Schubert cycles, Iwasawa decomposition, 
characteristic subalgebra}

\subjclass{Primary: 11S90, 32S25, 55R80
Secondary:  57T15, 14M12, 20G05}

\begin{abstract}
We consider the varieties of singular $m \times m$ complex matrices 
which may be either general, symmetric or skew-symmetric (with $m$ 
even).  For these varieties we have shown in another paper that they had 
compact \lq\lq model submanifolds\rq\rq\,  for the homotopy types of the 
Milnor fibers which are classical symmetric spaces in the sense of Cartan.  
In this paper we use these models, combined with results due to a number of 
authors concerning the Schubert decomposition of Lie groups and symmetric 
spaces via the Cartan model, together with Iwasawa decomposition, to give 
cell decompositions of the global Milnor fibers.
  \par 
The Schubert decomposition is in terms of \lq\lq unique ordered 
factorizations\rq\rq\, of matrices in the Milnor fibers as products of \lq\lq 
pseudo-rotations\rq\rq.  In the case of symmetric or 
skew-symmetric matrices, this factorization has the form of iterated \lq\lq 
Cartan conjugacies\rq\rq\, by pseudo-rotations. The decomposition respects 
the towers of Milnor fibers and symmetric spaces ordered by inclusions.  
Furthermore, the \lq\lq Schubert cycles\rq\rq, which are the closures of 
the Schubert cells, are images of products of suspensions of projective 
spaces (complex, real, or quaternionic as appropriate).  In the cases of 
general or skew-symmetric matrices the Schubert cycles have fundamental 
classes, and for symmetric matrices $\mod 2$ classes,  which give a basis 
for the homology.  They are also shown to correspond to the cohomology 
generators for the symmetric spaces.  For general matrices the duals of the 
Schubert cycles are represented as explicit monomials in the generators of 
the cohomology exterior algebra; and for symmetric matrices they are 
related to Stiefel-Whitney classes of an associated real vector bundle.\par 
Furthermore, for a matrix singularity of any of these types. the pull-backs of 
these cohomology classes generate a characteristic subalgebra of the 
cohomology of its Milnor fiber.    \par
We also indicate how these results extend to exceptional orbit 
hypersurfaces, complements and links, including a characteristic subalgebra 
of the cohomology of the complement of a matrix singularity. 
\end{abstract} 
\maketitle
\par
\section*{Preamble: Motivation from the Work of Brieskorn}  
\label{S:sec0a}
\par
After Milnor developed the basic theory of the Milnor fibration and the 
properties of Milnor fibers and links for isolated hypersurface 
singularities, Brieskorn was involved in fundamental ways in developing a 
more complete theory of isolated hypersurface singularities.  Furthermore 
through the work of his many students the theory was extended to isolated 
complete intersection singularities.  \par 
For isolated hypersurface singularities Brieskorn developed the 
importance of the intersection pairing on the Milnor fiber \cite{Br}.  This 
includes the computation of the intersection index for Pham-Brieskorn 
singularities, leading to the discovery that for a number of these 
singularities the link is an exotic topological sphere.  He also 
demonstrated in a variety of ways that group theory in various forms 
plays an essential role in understanding the structure of singularities.  
This includes the relation between the monodromy and the Milnor fiber 
cohomology by the Gauss-Manin connection, and including the intersection 
pairing \cite{Br2}.  This includes the relation with Lie groups, especially 
for the ADE classification for simple hypersurface singularities, where he 
identified the intersection pairing with the Dynkin diagrams for the 
corresponding Lie groups.  He also gave the structure of the discriminant 
for the versal unfoldings using the Weyl quotient map on the subregular 
elements of the Lie group \cite{Br3}.  In combined work with Arnold 
\cite{Br4}, he further showed that for the simple ADE singularities the 
complement of the discriminant is a $K(\pi, 1)$.  He continued on beyond 
the simple singularities to understand the corresponding structures for 
unimodal singularities \cite{Br5}, setting the stage for further work in 
multiple directions.   \par
The approaches which he initiated provide models for approaching 
questions for highly nonisolated hypersurface singularities which are used 
in this paper.  For matrix singularities, the high dimensional singular set 
means that the Milnor fiber, complement and link have low connectivity 
and hence can have (co)homology in many degrees \cite{KMs}.  To handle 
this complexity for matrix singularities of the various types, Lie group 
methods are employed to answer these questions.  Partial answers were 
already given in \cite{D3}, including determining the (co)homology of the 
Milnor fibers using representations as symmetric spaces.  This continues 
here by obtaining geometric models for the homology classes, 
understanding the analogue of the intersection pairing on the Milnor fiber 
via a Schubert decomposition, determining the structure of the link and 
complement, and their relations with the cohomology structure.  We see 
that there is the analogue of the ADE classification which is given for the matrix 
singularities by the ABCD classification for the infinite families of simple Lie groups.  
We also indicate how these geometric methods extend to complements and 
links, including more general exceptional orbit hypersurfaces for 
prehomogeneous spaces.  

\section*{Introduction}  
\label{S:sec0} 
\par
In this paper we derive the Schubert cell decomposition of the Milnor fibers 
of the varieties of singular matrices for $m \times m$ complex matrices 
which may be either general, symmetric, or skew-symmetric (with $m$ 
even).  We show that there is a homology basis obtained from \lq\lq 
Schubert cycles\rq\rq, which are the closures of these cells.  We further 
identify these homology classes with the cohomology.  For general matrices 
we identify the correspondence with monomials of the generators for the 
exterior cohomology algebra and for symmetric matrices we identify the 
Schubert classes with monomials in the 
Stiefel-Whitney classes of an associated vector bundle.  We also indicate how 
these results extend to more general exceptional orbit varieties and for the 
complements and links for all of these cases.  Furthermore, for general 
matrix singularities defined from these matrix types, we define 
characteristic subalgebras of the cohomology of the Milnor fibers and 
complements representing them as modules over these subalgebras. \par
 In \cite{D3} we computed the topology of the exceptional orbit 
hypersurfaces for classes of prehomogeneous spaces which include
these varieties of singular matrices.  
This included the topology of the Milnor fiber, link, and complement.  This 
used the representation of the complements and the global Milnor fibers as 
homogeneous spaces which are homotopy equivalent to compact models 
which are classical symmetric spaces studied by Cartan.  These symmetric 
spaces have representations as \lq\lq Cartan models\rq\rq, which can be 
identified as compact submanifolds of the global Milnor fibers.  \par 
We use the Schubert decomposition for these symmetric spaces 
developed by Kadzisa-Mimura \cite{KM} building on the earlier results for Lie 
groups  and Stiefel manifolds by J. H. C. Whitehead \cite{W}, C.E. Miller, 
\cite{Mi}, I. Yokota \cite{Y}.  This allows us to give a Schubert decomposition 
for the compact models of the Milnor fibers, which together with Iwasawa 
decomposition provides a cell decomposition for the global Milnor fibers in 
terms of the Schubert decomposition for these symmetric spaces. \par
The Schubert decompositions are in terms of cells defined by the unique 
\lq\lq ordered factorizations\rq\rq\, of matrices in the Milnor fibers into 
\lq\lq pseudo-rotations\rq\rq\, of types depending on the matrix type, and 
their relation to a flag of subspaces.  For symmetric or 
skew-symmetric matrices, this factorization has the form of iterated \lq\lq 
Cartan conjugacies\rq\rq\, by the pseudo-rotations.  These are given by a 
modified form of conjugacy which acts on the Cartan models.  \par
The Schubert decomposition is then further related to the co(homology) of 
the global Milnor fibers.  We do so by showing the Schubert cycles for the 
symmetric spaces are images of products of suspensions of projective 
spaces of various types (complex, real, and quaternionic as appropriate).  This 
allows us to relate the duals of the fundamental classes of the Schubert 
cycles ($\mod 2$ classes for symmetric matrices) to the cohomology 
classes given for Milnor fibers in \cite{D1}.  These are given for the 
different matrix types and various coefficients as exterior algebras.  In the 
symmetric matrix case the cohomology with $\Z/2\Z$ coefficients is  given 
as an exterior algebra on the Stiefel-Whitney classes of an associated 
real vector bundle.  For coefficient fields of characteristic zero the generators 
are classes which transgress to characteristic classes of appropriate types.  
\par 
We further indicate how these methods also apply to exceptional orbit 
hypersurfaces in \cite{D3} and how they further extend to the complements 
of the varieties and their links.  \par
Lastly, we show that for matrix singularities of these matrix types, we can 
pull-back the cohomology algebras of the global Milnor fibers to identify 
characteristic subalgebras of the Milnor fibers for these matrix singularities.  
This represents the cohomology of the Milnor fiber of a matrix singularity of 
any of these types as a module over the corresponding characteristic subalgebra.  
We also indicate how this also holds for the cohomology of the complement.  
\par
\vspace{10ex}
\flushpar
\centerline{CONTENTS}
\par
\vspace{2ex}
\flushpar
\S 1 \quad Cell Decomposition for Global Milnor Fibers in Terms of their 
Compact Models 
\par
\vspace{1ex}
\flushpar
\S 2 \quad Cartan Models for the Symmetric Spaces 
\par
\vspace{1ex}
\flushpar
\S 3 \quad Schubert Decomposition for Compact Lie Groups
\par
\vspace{1ex}
\flushpar
\S 4 \quad Schubert Decomposition for Symmetric Spaces 
\par
\vspace{1ex}
\flushpar
\S 5 \quad Schubert Decomposition for Milnor Fibers
\par
\vspace{1ex}
\flushpar
\S 6 \quad Representation of the Dual Classes in Cohomology
\par
\vspace{1ex}
\flushpar
\S 7 \quad Characteristic Subalgebra in the Milnor Fiber Cohomology of 
General Matrix \par
\hspace{1em} Singularities
\par
\vspace{1ex}
\flushpar
\S 8 \quad Extensions to Exceptional Orbit Varieties, Complements, and 
Links
\par
\vspace{5ex}
\section{Cell Decomposition for Global Milnor Fibers in Terms of their 
Compact Models }
\label{S:sec1} 
\par
We consider the varieties of singular $m \times m$ complex matrices 
which may be either general, symmetric, or skew-symmetric (with $m$ 
even).  In \cite{D1} we investigated the topology of these singularities, 
including the topology of the Milnor fiber, link and complement.  This was 
done by viewing them as the exceptional orbit varieties obtained by the 
representation of a complex linear algebraic group $G$ on a complex vector 
space $V$ with open orbit.  For example this includes the cases where $V = 
M$ is one of the spaces of complex matrices $M = Sym_m$ or $M = Sk_m$ 
(for $m = 2k$) acted on by $\GL_m(\C)$ by $B\cdot A = B A B^T$, or , $M = 
M_{m, m}$ and $\GL_m(\C)$ acts by left multiplication.  Each of these 
representations have open orbits and the resulting prehomogeneous space 
has an exceptional orbit variety $\cE$ which is a hypersurface of singular 
matrices.  \par
\begin{Definition} 
\label{Def1.1}
The {\em determinantal hypersurface} for the space of $m \times m$ 
symmetric or general matrices, denoted by $M = Sym_m$ or $M = M_{m, 
m}$ is the hypersurface of singular matrices defined by $\det : M \to \C$ 
and denoted by $\cD_m^{(sy)}$ for $M = Sym_m$, or $\cD_m$ for $M = 
M_{m, m}$.  For the space of $m \times m$ skew-symmetric matrices 
$M = Sk_m$ (for $m = 2k$) the determinantal hypersurface of singular 
matrices is defined by the Pfaffian $\Pf : Sk_m \to \C$, and is denoted by 
$\cD_m^{(sk)}$.  In the following we uniformly denote any of these functions 
as $f$.  
\end{Definition} 
Then, we showed in \cite{D3} that the Milnor fibers for each of these 
singularities at $0$ are diffeomorphic to their global Milnor fibers
$f^{-1}(1)$ which are denoted by: $F_m$ for general case, $F_m^{(sy)}$ for 
the symmetric case, and $F_m^{(sk)}$ for the skew-symmetric case.  Then, 
we show in Theorem 3.1 in \cite[\S 3]{D3} that each global Milnor fiber is 
acted on transitively by a linear algebraic group and so is a homogeneous 
space.  In particular, 
$F_m = SL_m(\C)$, $F_m^{(sy)} \simeq SL_m(\C)/SO_m(\C)$, and 
$F_{2m}^{(sk)} \simeq SL_{2m}(\C)/Sp_m(\C)$.  Moreover, these spaces 
have as deformation retracts spaces which are symmetric spaces of 
classical type studied by Cartan:  $SL_m(\C)$ has as deformation retract 
$SU_m$; $SL_m(\C)/SO_m(\C)$ has as deformation retract $SU_m/SO_m$; 
and $SL_{2m}(\C)/Sp_m(\C)$ has as deformation retract $SU_{2m}/Sp_m$. 
These are compact models for the Milnor fibers and we denote them as 
$F_m^{c}$, $F_m^{(sy)\, c}$, and $F_{2m}^{(sk)\, c}$ respectively.
\par 
 This allowed us to obtain the rational (co)homology (and integer cohomology 
for the general and skew-symmetric cases and the $\Z/2\Z$ cohomology 
for the symmetric cases), as well as using the Bott periodicity theorem to 
compute the homotopy groups in the stable range. 
\par 
We will now further use the cell decompositions  of the symmetric spaces 
together with Iwasawa decomposition to give the cell decompositions for the 
global Milnor fibers.  We recall the Iwasawa decomposition for $SL_m(\C)$ 
has the form $KAN$ where $K = SU_m$, $A_m$ consists of diagonal 
matrices with real positive entries of $\det = 1$, and $N_m$ is the nilpotent 
group of upper triangular complex matrices with $1$\rq s on the diagonal.  In 
particular, this means that the map $SU_m \times A_m \times N_m \to 
SL_m(\C)$ sending $(U, B, C) \mapsto U\cdot  B\cdot C$ is a real algebraic 
diffeomorphism.  Alternatively $A_m\cdot N_m$ consists of the upper 
triangular matrices of $\det = 1$ with complex entries except having real 
positive entries on the diagonal.  As a manifold it is diffeomorphic to a 
Euclidean space of real dimension $2\binom{m}{2} +m-1$.   We denote this 
subgroup of $SL_m(\C)$ as $\Sol_m$, which is a real solvable subgroup of 
$SL_m(\C)$.  
\par
For any of the preceding cases, let $F$ denote the Minor fiber and $Y$ the 
compact symmetric space associated to it.  Suppose that $Y$ has a cell 
decomposition with open cells $\{ e_i: I = 1, \dots , r\}$.  Then, we have the 
following simple proposition.
\begin{Proposition}
\label{Prop1.1}
With the preceding notation, the cell decomposition of $F$ is given by 
$\{ e_i \cdot \Sol_m: I = 1, \dots , r\}$.  Moreover, if the closure 
$\bar{e_i}$ has a fundamental homology class (for Borel-Moore homology) 
then $\overline{e_i \cdot \Sol_m}  = \bar{e_i} \cdot \Sol_m$ has a 
fundamental homology class with the same Poincar\'{e} dual.
\end{Proposition}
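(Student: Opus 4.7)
The plan is to promote the deformation retraction $Y \hookrightarrow F$ recalled above to a product diffeomorphism $\Phi \colon Y \times \Sol_m \xrightarrow{\sim} F$, under which the stated cell decomposition of $F$ is simply the $\Phi$-image of the product decomposition $\{e_i \times \Sol_m\}$ of $Y \times \Sol_m$. For $F_m = SL_m(\C)$ with $Y = SU_m$, the map $\Phi(u,s) = u s$ is precisely the Iwasawa diffeomorphism recalled just before the proposition. For the symmetric and skew-symmetric cases $F = G_\C/H_\C$ with compact model $Y = G_u/H_u$ (so $G_\C = SL_m(\C)$ and $H_\C = SO_m(\C)$ or $Sp_m(\C)$), I would use the Iwasawa decomposition of the symmetric space: the multiplication map, interpreted in the coset or Cartan-embedding sense that gives meaning to $e_i \cdot \Sol_m$, descends from the Iwasawa decomposition of $SL_m(\C)$ because $\Sol_m$ is transverse to the isotropy subgroup in the fiber of the retraction $F \to Y$ constructed in \cite{D3}.

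Given $\Phi$, the rest is formal. Since $\Sol_m$ is diffeomorphic to $\R^N$ with $N = 2\binom{m}{2} + m - 1$, each product $e_i \times \Sol_m$ is an open topological ball of dimension $\dim e_i + N$, and $\Phi$ turns this into the cell $e_i \cdot \Sol_m$ of $F$; the collection $\{e_i \cdot \Sol_m\}$ therefore exhausts $F$ as a cell decomposition. Because $\Phi$ is a homeomorphism, closures commute with multiplication by $\Sol_m$, yielding $\overline{e_i \cdot \Sol_m} = \bar e_i \cdot \Sol_m$.

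For the Borel--Moore fundamental class, I would invoke the K\"unneth isomorphism $H^{BM}_*(\bar e_i \times \Sol_m) \cong H^{BM}_*(\bar e_i) \otimes H^{BM}_*(\R^N)$; since $H^{BM}_N(\R^N) \cong \Z$ is generated by the orientation class of $\Sol_m$, pairing $[\bar e_i]^{BM}$ with this class supplies a fundamental class of $\bar e_i \cdot \Sol_m$. For the Poincar\'e dual statement, I would use that Poincar\'e--Lefschetz duality on a product factors as the tensor product of the individual dualities; since the top Borel--Moore class of $\R^N$ is dual to $1 \in H^0(\R^N)$, the Poincar\'e dual of $\bar e_i \cdot \Sol_m$ in $F$ corresponds to the Poincar\'e dual of $\bar e_i$ in $Y$ under the canonical cohomology isomorphism $H^*(F) \cong H^*(Y)$ induced by the deformation retraction.

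The main obstacle is Step 1 in the two quotient cases: verifying that multiplication by $\Sol_m$, in whatever ambient setting makes the product $e_i \cdot \Sol_m$ meaningful, actually produces a diffeomorphism $Y \times \Sol_m \to F$ rather than merely a surjection. This requires identifying the Iwasawa structure on the symmetric space and checking transversality of $\Sol_m$ to the isotropy; once this is in hand, all remaining assertions are automatic from K\"unneth, the product factorization of Poincar\'e duality, and the contractibility of $\Sol_m$.
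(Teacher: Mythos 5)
Your proposal is correct and follows essentially the same route as the paper: both rest on the Iwasawa product diffeomorphism $Y\times \Sol_m \simeq F$, after which the cell decomposition, the closure identity, and the fundamental class are formal, and your K\"unneth/product-duality justification of the Poincar\'e dual statement is interchangeable with the paper's fiber-square pull-back argument along the transverse inclusion $Y\hookrightarrow F$. You also correctly flag that the only real content is the product diffeomorphism in the symmetric and skew-symmetric cases; the paper defers that verification to \S 5, where it is realized by the twisted action $(E,\tilde A)\mapsto E\cdot\tilde A\cdot E^T$ of $\Sol_m^T$ on the Cartan model, obtained by diagonalizing the (skew-)symmetric form and applying Iwasawa in $SL_m(\C)$.
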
 

\begin{proof}
By the Iwasawa decomposition $Y \times \Sol_m \simeq F$ via $(U, B) 
\mapsto U\cdot B$.  Hence, if for $i \neq j$, $e_i \cap e_j = \emptyset$, 
then  $(e_i \times \Sol_m) \cap (e_j \times \Sol_m)  = \emptyset$ and 
$(e_i \cdot \Sol_m) \cap (e_j \cdot \Sol_m)  = \emptyset$.  
Also, as $Y = \cup_{i} e_i$ is a disjoint union, so also is $F = \cup_{i} 
e_i\cdot \Sol_m$.  Third, each $e_i \times \Sol_m$ is homeomorphic to a 
cell of dimension $\dim_{\R}(e_i) + 2\binom{m}{2} +m-1$.  Thus, $F$ is a 
disjoint union of the cells $e_i\cdot \Sol_m$.  Lastly, $\bar{e_i} = e_i 
\cup_{j_i} e_{j_i}$ where the last union is over cells of dimension less than 
$\dim e_i$.  Hence, $\bar{e_i  \cdot \Sol_m}  = \bar{e_i} \cdot \Sol_m = 
(e_i \cdot \Sol_m) \cup_{j_i} (e_{j_i}\cdot \Sol_m)$.  Hence this is a cell 
decomposition. \par
Then, $\bar{e_i}$ is a singular manifold with open smooth manifold $e_i$.  If 
it has a Borel-Moore fundamental class, which restricts to that of $e_i$, 
then so does $\overline{e_i  \cdot \Sol_m}$ have a fundamental class that 
restricts to that for $e_i  \cdot \Sol_m \simeq  e_i \times \Sol_m$.  Then, 
as $\bar{e_i}$ is the pull-back of $\overline{e_i  \cdot \Sol_m}$ under the 
map $ \iti : Y \to Y \times \Sol_m \simeq F$ which is transverse to 
$\bar{e_i}  \times \Sol_m \simeq \overline{e_i  \cdot \Sol_m}$, by a 
fiber-square argument for Borel-Moore homology, the Poincar\'{e} dual of 
$\overline{e_i  \cdot \Sol_m}$ pulls-back via $\iti^*$ to the Poincar\'{e} dual 
of $\bar{e_i}$.  As $\iti$ is a homotopy equivalence, via the isomorphism 
$\iti^*$ the Poincar\'{e} duals agree.
\end{proof}
\par
\section{Cartan Models for the Symmetric Spaces}
\label{S:sec2} 
\par 
\subsection*{The General Cartan Model} \hfill 
\par
By Cartan, a symmetric space is defined by a Lie group G with an involution 
$\gs : G \to G$ so that the symmetric space is given by the quotient space 
$G/G^{\gs}$, where $G^{\gs}$ denotes the subgroup of $G$ invariant under 
$\gs$.  Furthermore this space can be embedded into the Lie group $G$. The 
embedding is called the {\em Cartan model}.  It is defined as follows, where 
we follow the approach of Kadzisa-Mimura \cite{KM} and the references 
therein. 
They introduce two subsets $M$ and $N$ of $G$ defined by:
$$ M \,\, = \,\, \{ g \gs(g^{-1}): g \in G\} \qquad \text{and} \qquad N \,\,  = \, \, 
\{ g \in G:  \gs(g^{-1}) = g\}.$$
Then, we have $G/G^{\gs} \simeq M \subset N$.  The inclusion is the obvious 
one, and the homeomorphism is given by $g \mapsto g \gs(g^{-1})$.  Via this 
homeomorphism, we may identify the symmetric space $G/G^{\gs}$ with the 
subset $M \subset G$.  The subspace $N$ is closed in G, and it can be shown 
that $M$ is the connected component of $N$ containing the identity element.  
In the three cases we consider, it will be the case that $M = N$.  
\par  
We also note that while $M$ and $N$ are subspaces of $G$, they are not 
preserved under products nor conjugacy; however they do have the 
following properties. \flushpar
{\it Further Properties of the Cartan Model:} \hfill 
\flushpar
\begin{itemize}
\item[i)]  there is an action of $G$ on both $M$ and $N$ defined by $g \cdot 
h = g h \gs(g^{-1})$ and on $M$ it is transitive;
\item[ii)] the homeomorphism $G/G^{\gs} \simeq M$ is $G$-equivariant 
under left multiplication on $G/G^{\gs}$ and the preceding action on $M$;
\item[iii)] both $M$ and $N$ are invariant under taking inverses; and 
\item[iv)]  if $g, h \in N$ commute then $gh \in N$. 
\end{itemize}
For $U_n$, $g^* = g^{-1}$ so an alternative way to write the action in i) is 
given by $g \mapsto h\cdot g \cdot \gs(h^*)$.   We will refer to this action 
as {\em Cartan conjugacy}. 
\par 
Then, Kadzisa-Mimura use the cell decompositions for various $G$ to give the 
cell decompositions for $M$ and hence the symmetric space $G/G^{\gs}$.  
There is one key difference with what we will do versus what Kadzisa-Mimura 
do.  They give the cell decomposition; however we also want to represent the 
closed cells where possible as the images of specific singular manifolds, 
specifically products of suspensions of projective spaces of various types 
and to relate the fundamental homology classes to corresponding classes in 
cohomology.  Together with the reasoning in \S \ref{S:sec1} and the 
identification of the global Milnor fibers with the Cartan models, we will then 
be able to give the Schubert decomposition for the global Milnor fibers and 
identify the Schubert homology classes with dual cohomology classes.  \par

\subsection*{The Cartan Models for $SU_m$, $SU_m/SO_m$, and 
$SU_{2m}/Sp_m$} \hfill
\par
For the three cases we consider: 
$SU_m$ , $SU_m/SO_m$, $SU_{2m}/Sp_m$, we first observe that the exact 
sequence of groups \eqref{CD2.1} does not split  
\begin{equation}
\label{CD2.1}
\begin{CD} 
 {1} @>>> {SU_m} @>>> {U_m} @>\det >> {S^1}  @>>>  {1} \, .
\end{CD}  
\end{equation}
However, it does split as manifolds $U_m \simeq S^1 \times SU_m$ sending 
$$ C \mapsto (\det(C), I_{1, m-1}(\det(C))\cdot C),$$ 
where $I_{1, m-1}(\det(C)^{-1})$ is the $m \times m$ diagonal matrix with $1$\rq s 
on the diagonal except in the first position where it is $\det(C)^{-1}$.  Thus, 
topological statements about $U_m$ have 
corresponding statements about $SU_m$ and conversely.\par 
We first give the representation for the symmetric spaces.  For $SU_m$ we 
just use itself as a compact Lie group.  \par 
Next, for $SU(m)/SO(m)$ we let the involution $\gs$ on 
$SU(m)$ be defined by $C \mapsto \overline{C}$.  We see that $\gs(C) = C$ is 
equivalent to $C = \overline{C}$.  Thus $C$ is a real matrix which is unitary; and 
hence $C$ is real orthogonal.  As $\det(C) = 1$, we see that $SU_m^{\gs} = 
SO_m$.  \par
The third case is  $SU_{2n}/Sp_n$ for $m = 2n$.  In this case, the involution 
$\gs$ on $SU_{2n}$ sends $C \mapsto J_n\overline{C}J_n^*$ where $J_n$ is the 
$2n \times 2n$ block diagonal matrix with $2 \times 2$ diagonal blocks 
$\begin{pmatrix} 0 & 1 \\ -1 & 0 \end{pmatrix}$.  As $J_n^* = J_n^T = -J_n 
= J_n^{-1}$, then $\gs(C) = C$ is equivalent to $J_n\overline{C}J_n = -C$, or as 
$C^{-1} = \overline{C}^T$ we can rearrange to obtain $C^T J_n C = J_n$ (or 
alternatively $C J_n C^T = J_n$), which implies that $C$ leaves invariant the 
bilinear form $(v, w) = v^T J_n w$ (for column vectors $v$ and $w$) and so is 
an element of $Sp_n(\C)$, and so an element of $Sp_n = SU_{2n}\cap 
Sp_n(\C)$.  \par 
The corresponding Cartan models are then given as follows.  We denote the 
Cartan models by respectively: $\cC_m$, $\cC_m^{(sy)}$, and 
$\cC_m^{(sk)}$.  \par
First, for $G = SU_m$, which is itself a symmetric space, and we let $\cC_m 
= SU_m$.  In this case, Cartan conjugacy is replaced by left multiplication.  
\par
Second, for $SU_m/SO_m$ we claim 
\begin{equation}
\label{Eqn2.1}
 \cC_m^{(sy)} \,\, \overset{def}{=} \,\, \{ C\cdot C^T: C \in SU_m\} \,\, = 
\,\, \{B \in SU_m : B = B^T\} \, . 
\end{equation} 
The inclusion of the LHS in the RHS is immediate.  For the converse, we note 
that if $B \in SU_m$ and $B = B^T$, then by the following Lemma given in 
\cite{KM} there is an orthonormal basis of eigenvectors which are real 
vectors so we may write $B = ACA^{-1}$ with $A$ an orthogonal matrix and 
$C$ a diagonal matrix with diagonal entries $\gl_j$ so that 
$| \gl_j | = 1$.  Thus, $A^{-1} = A^T$, and so $B = ADA^T\cdot ADA^T$ with 
$D$ a diagonal matrix with entries $\sqrt{\gl_j}$.  
\begin{Lemma}
\label{Lem2.1}
If $B \in SU_m$ and $B = B^T$ then there is a real orthonormal basis of 
eigenvectors for $B$.  
\end{Lemma}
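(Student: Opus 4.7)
The plan is to reduce the problem to simultaneous diagonalization of two commuting real symmetric matrices by a real orthogonal matrix, which is a classical result.

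First I would decompose $B$ into real and imaginary parts. Write $B = X + iY$ with $X, Y$ real $m \times m$ matrices. The hypothesis $B = B^{T}$ gives $X^{T} = X$ and $Y^{T} = Y$, so both are real symmetric. The unitarity hypothesis $B^{*}B = I$ becomes $(X - iY)(X + iY) = I$, and separating real and imaginary parts yields the two relations
\begin{equation*}
X^{2} + Y^{2} \,=\, I \qquad \text{and} \qquad XY \,=\, YX.
\end{equation*}

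Next I would invoke the standard fact that a commuting pair of real symmetric matrices admits a common real orthonormal eigenbasis: choose a real orthogonal $Q$ with $Q^{T} X Q = D_{X}$ and $Q^{T} Y Q = D_{Y}$, both diagonal. Setting $D = D_{X} + i D_{Y}$, a diagonal complex matrix, gives
\begin{equation*}
B \,=\, X + iY \,=\, Q\,(D_{X} + i D_{Y})\,Q^{T} \,=\, Q D Q^{T}.
\end{equation*}
Since $Q$ is real orthogonal, $Q^{T} = Q^{-1}$, so $B Q = Q D$, showing the columns of $Q$ are eigenvectors of $B$ with eigenvalues the diagonal entries $\lambda_{j}$ of $D$. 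From $X^{2} + Y^{2} = I$ (or equivalently from $|\det B| = 1$ applied at each eigenvalue) we get $|\lambda_{j}|^{2} = D_{X,jj}^{2} + D_{Y,jj}^{2} = 1$, consistent with the assertion in the surrounding text that $|\lambda_{j}| = 1$.

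The main (mild) obstacle is simply the simultaneous diagonalization step; this is standard but worth handling carefully by either quoting the general theorem for commuting self-adjoint operators or by inducting on eigenspaces of $X$ (each eigenspace of $X$ is preserved by $Y$ because they commute, and the restriction of $Y$ to each such real eigenspace is again real symmetric, hence diagonalizable by a real orthogonal change of basis within that eigenspace). Beyond that, the proof is essentially a direct computation, and the resulting columns of $Q$ provide the desired real orthonormal basis of eigenvectors for $B$.
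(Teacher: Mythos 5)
Your proof is correct, but it takes a different route from the paper's. The paper disposes of the lemma in one sentence: since $B=B^{T}$ and $B$ is unitary, $\overline{B}=B^{*}=B^{-1}$, so each eigenspace of $B$ is invariant under complex conjugation (if $Bx=\lambda x$ then $B\bar{x}=\lambda\bar{x}$, using $|\lambda|=1$); a conjugation-invariant subspace has a real basis, which one orthonormalizes within each eigenspace, and distinct eigenspaces are already orthogonal because $B$ is normal. You instead split $B=X+iY$ into real and imaginary parts, observe that unitarity forces $X^{2}+Y^{2}=I$ and $XY=YX$ with $X,Y$ real symmetric, and then invoke simultaneous orthogonal diagonalization of a commuting pair of real symmetric matrices. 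Both arguments are sound and of comparable length once the standard inputs are granted; the paper's version works eigenspace by eigenspace and makes the role of the conjugation symmetry (which recurs throughout the Cartan-model discussion) explicit, while yours trades that for a single appeal to a classical linear-algebra theorem and has the small bonus of recovering $|\lambda_{j}|=1$ directly from $X^{2}+Y^{2}=I$ rather than from unitarity. The one step you should not leave implicit is the simultaneous diagonalization itself, but your sketch of the induction on eigenspaces of $X$ (each preserved by $Y$, on which $Y$ restricts to a real symmetric operator) is exactly the right way to close it.
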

This is a simple consequence of the eigenspaces being invariant under 
conjugation, which is easily seen to follow from the conditions.  In this case, 
Cartan conjugacy by $A$ on $B$ is checked to be given by $B \mapsto 
A\cdot B \cdot A^T$. 
\par
Third, for $SU_{2n}/Sp_n$ with $m = 2n$,  we may directly verify 
\begin{equation}
\label{Eqn2.2}
 \cC_m^{(sk)} \,\, \overset{def}{=} \,\, \{ C\cdot J_n\cdot C^T\cdot 
J_n^*: C \in SU_{2n}\} \,\, = \,\, \{ B \in SU_{2n} : (B\cdot J_n)^T = -
B\cdot J_n\} \, . 
\end{equation}
Then, Cartan conjugacy by $A$ on $B$ is given by $B \mapsto A\cdot 
(B\cdot J_n) \cdot A^T\cdot J_n^{-1}$, with $B\cdot J_n$ 
skew-symmetric for $B \in \cC_m^{(sk)}$.
\par
Hence, from \eqref{Eqn2.1}, we have the compact model for $F_m^{(sy)}$ as 
a subspace is given by $F_m^{(sy)\, c} = SU_m \cap Sym_{m}(\C)$ and the 
Cartan model for the symmetric space $SU_m/SO_m$ is given by $F_m^{(sy)\, 
c}$ itself.  Similarly, from \eqref{Eqn2.2}, we have the compact model for 
$F_m^{(sk)}$ with $m =2n$ as a subspace is given by $F_m^{(sk)\, c} = SU_{m} 
\cap Sk_{m}(\C)$ and the Cartan model for the symmetric space 
$SU_{2n}/Sp_n$ is given by $F_m^{(sk)\, c}\cdot J_n^{-1}$.  
\par
\begin{Remark}
\label{Rem2.3}
Frequently for all three cases, we will want to apply a Cartan conjugate for 
an element of $U_n$ instead of $SU_n$.  The formula for the Cartan 
conjugate remains the same and the corresponding symmetric spaces are 
$U_n$, $U_n/O_n$, and $U_{2n}/Sp_n$.  By the properties of Cartan 
conjugacy, an iteration of Cartan conjugacy by elements  $A_i \in U_n$ 
whose product belongs to $SU_n$ will be a Cartan conjugate by an element of 
$SU_n$ and preserve the Cartan models of interest to us. 
\end{Remark}
\flushpar
\subsubsection*{Tower Structures of Global Milnor fibers and Symmetric 
Spaces by Inclusion} \hfill 
\par 
Lastly, these global Milnor fibers, symmetric spaces and compact models 
form towers via inclusions: i) sending $A \mapsto \begin{pmatrix} A & 0 \\ 0 
& 1 \end{pmatrix}$ for $SU_m \subset SU_{m+1}$, $F_m \subset 
F_{m+1}$, or $F_m^{(sy)} \subset F_{m+1}^{(sy)}$  which induce inclusions 
of the symmetric spaces $SU_m$ and $SU_m/SO_m$ and corresponding 
global Milnor fibers, or  ii) sending $A \mapsto \begin{pmatrix} A & 0 \\ 0 & I_2 
\end{pmatrix}$ for the $2 \times 2$ identity matrix $I_2$ for $SU_m 
\subset SU_{m+2}$ for $m = 2n$ and the corresponding symmetric spaces 
$SU_{2n}/Sp_n$ and Milnor fibers $F_m^{(sk)} \subset F_{m+2}^{(sk)}$.  
The Schubert decompositions will satisfy the additional property that they 
respect the inclusions. \par 

We summarize these results by the following.
\begin{Proposition}
\label{Prop2.3}
For the varieties of singular $m \times m$ complex matrices which are 
either general, symmetric or skew-symmetric, their global Milnor fibers, 
representations as homogeneous spaces, compact models given as 
symmetric spaces and Cartan models are summarized in Table 
\ref{Mil.fib.sym.sp}.
\end{Proposition}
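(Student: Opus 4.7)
The proposition is essentially a consolidation of facts established earlier in this section together with Theorem 3.1 of \cite{D3}, so my plan is to verify each of the four columns (global Milnor fiber, homogeneous space representation, compact symmetric space model, Cartan model) for each of the three matrix types.

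First, for the global Milnor fiber column, I would invoke the result from \cite{D3} showing that for each of $f = \det$ (on $Sym_m$ or $M_{m,m}$) and $f = \Pf$ (on $Sk_m$), the local Milnor fiber at the origin is diffeomorphic to the global fiber $f^{-1}(1)$; these are $F_m$, $F_m^{(sy)}$, $F_m^{(sk)}$ by Definition \ref{Def1.1}. For the homogeneous space representations, I would cite Theorem 3.1 of \cite{D3} directly, giving the three identifications $F_m = SL_m(\C)$, $F_m^{(sy)} \simeq SL_m(\C)/SO_m(\C)$, and $F_{2m}^{(sk)} \simeq SL_{2m}(\C)/Sp_m(\C)$. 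Transitivity of the action in each case is verified by exhibiting an explicit element mapping the identity (or $I_m$, or $J_n$) to an arbitrary element of the fiber, using ordinary, symmetric, or skew Gram-type factorizations.

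Second, to identify the compact models as the claimed symmetric spaces, I would use the Iwasawa decomposition $SL_m(\C) = SU_m \cdot \Sol_m$ from \S \ref{S:sec1}. Since $\Sol_m$ is contractible, $SU_m \hookrightarrow SL_m(\C)$ is a deformation retract; the same decomposition, passed to the respective quotients (and intersected with the fixed-point subgroup of the Cartan involution $\gs$), gives $SU_m/SO_m \hookrightarrow SL_m(\C)/SO_m(\C)$ and $SU_{2n}/Sp_n \hookrightarrow SL_{2n}(\C)/Sp_n(\C)$ as deformation retracts. This justifies the labels $F_m^c$, $F_m^{(sy)\,c}$, $F_{2m}^{(sk)\,c}$.

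Third, for the Cartan model column, I would quote the general Cartan construction: for each involution $\gs$ on the relevant $SU_\bullet$, the symmetric space $G/G^\gs$ is identified with $M = \{g\,\gs(g^{-1}) : g \in G\}$ via $g \mapsto g\,\gs(g^{-1})$. Applied to the three involutions singled out above (trivial involution for $SU_m$; $C \mapsto \overline{C}$ for $SU_m/SO_m$; $C \mapsto J_n \overline{C} J_n^*$ for $SU_{2n}/Sp_n$), this yields the Cartan models $\cC_m = SU_m$, $\cC_m^{(sy)}$ as in \eqref{Eqn2.1}, and $\cC_m^{(sk)}$ as in \eqref{Eqn2.2}. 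The nontrivial set-theoretic equalities in \eqref{Eqn2.1} and \eqref{Eqn2.2} have already been checked using Lemma \ref{Lem2.1} and a direct computation, so I would only need to cite them. Finally, the identifications $F_m^{(sy)\,c} = SU_m \cap Sym_m(\C)$ and $F_{m}^{(sk)\,c} = (SU_m \cap Sk_m(\C))\cdot J_n$ connecting the compact models inside the Milnor fibers to the Cartan models are immediate from the same equations.

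Since every entry in the table is either a direct citation of \cite{D3} or a reformulation already carried out earlier in this section, there is no genuinely hard step; the main work is purely bookkeeping, making sure the normalizations (in particular the $J_n$ twist in the skew case and the distinction between acting by $U_n$ versus $SU_n$ as noted in Remark \ref{Rem2.3}) are consistent across all three columns. I would close the proof with a short sentence pointing out that the table simply records the preceding discussion.
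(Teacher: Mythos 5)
Your proposal matches the paper's treatment exactly: Proposition \ref{Prop2.3} is given no separate proof there --- it simply records the identifications already established in \S\ \ref{S:sec1}--\ref{S:sec2} (the homogeneous-space representations from Theorem 3.1 of \cite{D3}, the deformation retracts onto the compact symmetric spaces, and the Cartan models \eqref{Eqn2.1}--\eqref{Eqn2.2}), which is precisely what you assemble. One small normalization slip on the point you yourself flag as needing care: the compact model is $F_m^{(sk)\, c} = SU_m \cap Sk_m(\C)$ itself, and the Cartan model is $\cC_m^{(sk)} = F_m^{(sk)\, c}\cdot J_n^{-1}$ (equivalently $F_m^{(sk)\, c} = \cC_m^{(sk)}\cdot J_n$), not $F_m^{(sk)\, c} = (SU_m \cap Sk_m(\C))\cdot J_n$ as written.
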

\par
\vspace{2ex}
\begin{table}[h]
\begin{tabular}{|l|c|c|c|l|}
\hline
Milnor   & Quotient  & Symmetric  & Compact Model & Cartan \\
 Fiber $F_m^{(*)}$  &  Space  &  Space  & $F_m^{(*)\, c}$ &  Model   \\
\hline
$F_m$  & $SL_m(\C)$ & $SU_m$  &  $SU_m$  & $F_m^{c}$ \\
\hline
$F_m^{(sy)}$  &  $SL_m(\C)/SO_m(\C)$  & $SU_m/SO_m$  &  $SU_m \cap 
Sym_{m}(\C)$  &  $F_m^{(sy)\, c}$ \\
\hline
$F_m^{(sk)}, m = 2n$ &  $SL_{2n}(\C)/Sp_{n}(\C)$  & $SU_{2n}/Sp_n$  &  
$SU_{m} \cap Sk_{m}(\C)$  & $F_{m}^{(sk)\, c}\cdot J_n^{-1}$  \\
\hline
\end{tabular}
\caption{Global Milnor fiber, its representation as a homogenenous space, 
compact model as a symmetric space, compact model as subspace and 
Cartan model.}
\label{Mil.fib.sym.sp}
\end{table}
\par
\section{Schubert Decomposition for Compact Lie Groups }
\label{S:sec3} 
\par
We recall the \lq\lq Schubert decomposition\rq\rq for compact Lie groups, 
concentrating on $SU_n$.  The cell decompositions of certain compact Lie 
groups, especially $SO_n$ and $U_n$ and $SU_n$ were carried out by C. E. 
Miller \cite{Mi} and I. Yokota \cite{Y}, building on the work of J. H. C. 
Whitehead \cite{W} for the cell decomposition of Stiefel varieties.   
In the case of Grassmannians, the Schubert decomposition is in terms of the 
dimensions of the intersections of the subspaces with a given fixed flag of 
subspaces.  For these Lie groups, elements are expressed as ordered 
products of (complex) \lq\lq pseudo rotations\rq\rq about complex 
hyperplanes (or reflections about real hyperplanes in the case of $SO_n$).  
The cell decomposition is based on the subspaces of a fixed flag that contain 
the orthogonal lines to the hyperplane axes of rotation (or reflection).  We 
will concentrate on the complex case which is relevant to our situation.  \par
\subsection*{(Complex) Pseudo-Rotations} \hfill 
\par
We note that given a complex $1$--dimensional subspace $L \subset \C^n$, 
we can define a \lq\lq (complex) pseudo-rotation\rq\rq about the orthogonal 
hyperplane $L^{\perp}$ as follows.  Let $x \in L$ be a unit vector.  As $L$ is 
complex we have a positive sense of rotation through an angle $\theta$ 
given by $x \mapsto e^{i\theta} x$.  We extend this to be the identity on 
$L^{\perp}$.  This is given by the following formula for any $x^{\prime} \in 
\C^n$:
$$ A_{(\theta, x)}(x^{\prime}) \,\, = \,\,  x^{\prime} - ((1 - e^{i\theta}) 
<x^{\prime}, x>)\, x \, .  $$   
This is not a true rotation as a complex linear transformation so we refer to 
this as a \lq\lq pseudo-rotation\rq\rq.  Then, $A_{(\theta, x)}$ can be 
written in matrix form as 
$A_{(\theta, x)} = (I_n - (1 - e^{i\theta})\, x\cdot \bar{x}^T)$ for $x$ an 
$n$-dimensional column vector.  
\begin{Remark}
\label{Rem3.0}
	In the special case that $A_{(\theta, x)}$ has finite order as an 
element of the group $U_n$, it is called a \lq\lq complex reflection\rq\rq. 
\end{Remark}
\par 
We observe a few simple properties of pseudo-rotations:
\begin{itemize}
\item[i)] $A_{(\theta, x)}$ only depends on $L = <x>$, so we will also feel 
free to use the alternate notation $A_{(\theta, L)}$;
\item[ii)] $A_{(\theta, x)}$ is a unitary transformation with 
$\det(A_{(\theta, x)}) = e^{i\theta}$;
\item[iii)]  if $B \in U_n$, then 
$B\cdot A_{(\theta, x)}\cdot B^{-1} = A_{(\theta, Bx)}$ is again a 
pseudo-rotation; and
\item[iv)] $\overline{A_{(\theta, x)}} = A_{(-\theta, \bar{x})}$; 
$A^{-1}_{(\theta, x)} = A_{(-\theta, x)}$; and $A^{T}_{(\theta, x)} = 
A_{(\theta, \bar{x})}$.
\end{itemize}
 \par
\subsection*{Ordered Factorizations in $SU_m$ and Schubert Symbols} \hfill
\par
Then, given any $B \in SU_n$, we may diagonalize $B$ using an orthonormal 
basis $\{ v_1, \dots , v_n\}$ so if $C$ denotes the unitary matrix with the 
$v_i$ as columns, then we may write $B = C D C^{-1}$ where $D$ is a 
diagonal matrix with diagonal entries $\gl_i$ of unit length so that $\prod_{i 
= 1}^{n} \gl_i = 1$.  This can be restated as saying that $B$ is a product of 
pseudo-rotations about the hyperplanes $<v_j>^{\perp}$ with angles 
$\theta_j$ where $\gl_j = e^{i\theta_j}$.  Thus, $B = \prod_{j = 1}^{n}  
A_{(\theta_j, v_j)}$.  However, we note that as certain eigenspaces may 
have dimension $ > 1$, the terms and their order in the product are not unique.  \par
There is a method introduced by Whitehead and used by Miller and Yokota for 
obtaining a unique factorization leading to the Schubert decomposition in 
$SU_n$.  The product is rewritten as a product of different 
pseudo-rotations whose lines satisfy certain inclusion relations for a fixed 
flag leading to an ordering of the pseudo-rotations.  We let $0 \subset \C 
\subset \C^2 \subset \dots \subset \C^n$ denote the standard flag.  Then, 
if $L = \, <x> \, \subset \C^k$ and $L = \, <x> \, \not\subset \C^{k-1}$, we 
will say that {\em $x$ and $L$ minimally belong to $\C^k$ }and introduce the 
notation $x \in_{\min} \C^k$ or $L \subset_{\min} \C^k$.  If $x = (x_1, x_2, 
\dots , x_n)$ then $x \in_{\min} \C^k$ iff $x_{k+1} = \cdots  = x_n = 0$ 
and $x_k \neq 0$.  We observe two simple properties: if $x \in_{\min} \C^k$ 
then $\bar{x} \in_{\min} \C^k$; and if $x^{\prime} \in_{\min} 
\C^{k^{\prime}}$ with $k^{\prime} < k$, then $A_{(\theta, x^{\prime})}(x) 
\in_{\min} \C^k$. 
\par
Then to rewrite the product in a different form, we proceed, as in the other 
papers, to follow Whitehead with the following lemma.
\begin{Lemma}
\label{Lem3.1}
Suppose that we have two pseudo-rotations $A_{(\theta, x)}$ and 
$A_{(\theta^{\prime}, x^{\prime})}$ with $x \in_{\min} \C^{m}$ and 
$x^{\prime} \in_{\min} \C^{m^{\prime}}$.  
\begin{itemize}
\item[1)] If $m > m^{\prime}$, then 
\begin{equation}
\label{Eqn3.1}
 A_{(\theta, x)} \cdot A_{(\theta^{\prime}, x^{\prime})}\,\, = \,\, 
A_{(\theta^{\prime}, x^{\prime})} \cdot 
A_{(\theta, \tilde{x})}  
\end{equation}
where $\tilde{x} = A_{(\theta^{\prime}, x^{\prime})}^{-1}(x)$. 
\item[2)]  If $m = m^{\prime}$, and $<x> \neq <x^{\prime}>$ let $W = < x, 
x^{\prime}>$, which has dimension $2$, and let $L = \, <\tilde{x}> \, = W 
\cap \,\C^{m-1}$, with $\tilde{x} \in_{\min} \C^k$ for $k \leq m-1$.  Then, 
there exist pseudo-rotations $A_{(\tilde{\theta}, \tilde{x})}$ and 
$A_{(\tilde{\theta}^{\prime}, \tilde{x}^{\prime})}$ with $\tilde{x} \in_{\min} 
\C^{k}$ and $\tilde{x}^{\prime} \in_{\min} \C^{m}$ such that 
\begin{equation}
\label{Eqn3.1b}
A_{(\theta, x)} \cdot A_{(\theta^{\prime}, x^{\prime})}\,\, = \,\, 
 A_{(\tilde{\theta}, \tilde{x})}  \cdot A_{(\tilde{\theta}^{\prime}, 
\tilde{x}^{\prime})}\, .
\end{equation}
Moreover, for generic $x, x^{\prime} \in_{\min} \C^m$, $\tilde{x} \in_{\min} 
\C^{m-1}$.
 \end{itemize}
\end{Lemma}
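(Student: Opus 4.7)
For Case~1 ($m > m'$), applying property (iii) of pseudo-rotations with $B = A_{(\theta', x')}$ gives
\[ A_{(\theta, x)} \cdot A_{(\theta', x')} \,=\, A_{(\theta', x')} \cdot \bigl( A_{(\theta', x')}^{-1} A_{(\theta, x)} A_{(\theta', x')} \bigr) \,=\, A_{(\theta', x')} \cdot A_{(\theta, \tilde x)}, \]
where $\tilde x = A_{(\theta', x')}^{-1}(x) = A_{(-\theta', x')}(x)$; this is \eqref{Eqn3.1}. The condition $\tilde x \in_{\min} \C^m$ is immediate from the observation recorded just before the lemma: since $x \in_{\min} \C^m$ and $x' \in_{\min} \C^{m'}$ with $m' < m$, the pseudo-rotation $A_{(-\theta', x')}$ preserves the minimal flag level of $x$.

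For Case~2 ($m = m'$, $\langle x\rangle \neq \langle x'\rangle$), the crucial reduction is that both $A_{(\theta, x)}$ and $A_{(\theta', x')}$ fix $W^{\perp}$ pointwise, where $W = \langle x, x'\rangle$ is $2$-dimensional. Hence $T := A_{(\theta, x)} A_{(\theta', x')}$ is the identity on $W^{\perp}$ and acts as a unitary operator $T|_W$ on $W$ with $\det(T|_W) = e^{i(\theta+\theta')}$. Any candidate product $A_{(\tilde\theta, \tilde x)} A_{(\tilde\theta', \tilde x')}$ with $\tilde x, \tilde x' \in W$ also fixes $W^{\perp}$, so the factorization reduces to a $2$-dimensional problem on $W$: given $\tilde x$ a unit vector spanning $L = W \cap \C^{m-1}$ ($1$-dimensional, since $W \not\subset \C^{m-1}$), find $\tilde\theta, \tilde\theta' \in \R$ and a unit vector $\tilde x' \in W$ with $T|_W = A_{(\tilde\theta, \tilde x)} A_{(\tilde\theta', \tilde x')}|_W$. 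Equivalently, I choose $\tilde\theta$ so that $A_{(\tilde\theta, \tilde x)}^{-1} T|_W$ is a pseudo-rotation on $W$, i.e., has $1$ as an eigenvalue; then $\tilde x'$ is the unit vector in $W$ orthogonal to the $1$-eigenvector, and $\tilde\theta' := \theta + \theta' - \tilde\theta$ is forced by the determinant.

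Working in the orthonormal basis $\{\tilde x, \tilde y\}$ of $W$, write $T|_W = \begin{pmatrix} a & b \\ c & d\end{pmatrix}$. The characteristic equation $\det(A_{(\tilde\theta, \tilde x)}^{-1} T|_W - I) = 0$ simplifies to $e^{-i\tilde\theta}(\det(T|_W) - a) = d - 1$, yielding the explicit formula $e^{-i\tilde\theta} = (d-1)/(\det(T|_W) - a)$ when $a \neq \det(T|_W)$. The key verification is that this value lies on the unit circle: the cofactor formula for $T|_W^{-1} = T|_W^{*}$ gives the identity $a = \bar d \, \det(T|_W)$, hence $|a| = |d|$ and $\overline{\det(T|_W)}\,a = \bar d$, from which a direct computation produces $|d-1|^2 = 1 - 2\mathrm{Re}(d) + |d|^2 = 1 - 2\mathrm{Re}(d) + |a|^2 = |\det(T|_W) - a|^2$. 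The degenerate case $a = \det(T|_W)$ forces $d = 1$ and, by the same unitarity relations, $T|_W = \mathrm{diag}(a, 1)$, which admits the direct factorization $A_{(\arg a,\,\tilde x)} \cdot A_{(0,\,\tilde y)}$ with $\tilde y$ any unit vector in $W$ orthogonal to $\tilde x$.

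Finally, $\tilde x' \in W \setminus L$ forces $\tilde x' \notin \C^{m-1}$ and hence $\tilde x' \in_{\min} \C^m$. For the generic statement, $L$ is spanned by $\alpha x + \beta x'$ with $(\alpha,\beta)$ proportional to $(x'_m, -x_m)$, so the $(m-1)$-st coordinate of $\tilde x$ is a nonzero multiple of the $2\times 2$ minor $x'_m x_{m-1} - x_m x'_{m-1}$, which is nonzero for $(x, x')$ outside a codimension-one subvariety. The main obstacle is the Case~2 unit-circle check, where the non-obvious identity $a = \bar d \, \det(T|_W)$ (a rewriting of the $(1,1)$ cofactor formula for a $2\times 2$ unitary) is the crucial input.
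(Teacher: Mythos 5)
Your proof of part 1) is the paper's argument verbatim: conjugate by $A_{(\theta',x')}$ using property iii) and invoke the observation that a pseudo-rotation with axis minimally in a lower flag level preserves $\in_{\min}\C^m$. For part 2) you make the same initial reduction as the paper --- both factors fix $W^{\perp}$ pointwise and $L=W\cap\C^{m-1}$ is a line, so the problem lives on $W\simeq\C^2$ --- but you then take a genuinely different route to the existence of the factorization. The paper produces the \emph{second} factor geometrically: setting $v=(A_{(\theta,x)}A_{(\theta',x')})^{-1}(e_1)$ for $e_1$ the unit vector of $W$ orthogonal to $\tilde x$, it uses an order-two pseudo-rotation (a ``reflection'', so $\tilde\theta'=\pi$) carrying $e_1$ to $v$, after which the residual transformation fixes $e_1$ and is a pseudo-rotation about $\tilde x$. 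You instead solve for the \emph{first} angle from the characteristic equation $\det\bigl(A_{(\tilde\theta,\tilde x)}^{-1}T|_W-I\bigr)=0$, getting $e^{-i\tilde\theta}=(d-1)/(\det(T|_W)-a)$, and verify unimodularity from the unitarity relation $a=\bar d\,\det(T|_W)$; the second factor and its angle then come for free from the $1$-eigenvector and the determinant. I checked the identity $|d-1|^2=|\det(T|_W)-a|^2$ and your treatment of the degenerate case $a=\det(T|_W)$, $d=1$: the computation is correct. Your route is in fact the more robust one: a self-adjoint involution $A_{(\pi,y)}$ satisfies $\langle Ae_1,e_1\rangle=\langle e_1,Ae_1\rangle$, so a pseudo-rotation of angle exactly $\pi$ can carry $e_1$ to $v$ only when $\langle e_1,v\rangle$ is real, which fails for generic $\theta,\theta'$; in general the angle of the auxiliary factor must be solved for, which is precisely what your determinant computation does.

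One assertion you should justify rather than state: that $\tilde x'\in W\setminus L$ in the nondegenerate case. This holds because if the line spanned by $\tilde x'$ were $L$, the $1$-eigenvector of $A_{(\tilde\theta,\tilde x)}^{-1}T|_W$ would be orthogonal to $\tilde x$, forcing $d=1$, which is exactly your degenerate case (and there you choose $\tilde y\perp\tilde x$ in $W$, which is automatically not in $L$ and hence $\in_{\min}\C^m$). With that line added the argument, including the genericity statement via the minor $x'_mx_{m-1}-x_mx'_{m-1}$, is complete.
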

\begin{proof}
\par
For 1), by property iii) of pseudo-rotations, $A_{(\theta^{\prime}, 
x^{\prime})}^{-1}\cdot A_{(\theta, x)} \cdot A_{(\theta^{\prime}, 
x^{\prime})}$ is a pseudo-rotation of the form $A_{(\theta, \tilde{x})}$ with 
$\tilde{x} = A_{(\theta^{\prime}, x^{\prime})}^{-1}(x)$.  
Also, both $A_{(\theta, x)}$ and $A_{(\theta^{\prime}, x^{\prime})}$ are 
the identity on $\C^{m \perp}$; hence $\tilde{x} \in_{\min} \C^m$.  \par
For 2), if $<x> \, =\, <x^{\prime}>$, then the pseudo-rotations commute.  
Next, suppose these lines differ so the complex subspace $W$ spanned by 
$x$ and $x^{\prime}$ is $2$-dimensional.  Then, $\dim_{\C} W \cap 
\C^{m-1} = 1$.  We denote it by $L$ and let it be spanned by a unit vector 
$\tilde{x}$ with say $\tilde{x} \in_{\min} \C^k$ for $k \leq m-1$ (and 
generically $k = m-1$).  We note that both pseudo-rotations are the identity 
on $W^{\perp}$.  Also, $W \subset \C^{m}$.  It is sufficient to consider the 
pseudo-rotations restricted to $W \simeq \C^2$ with $\tilde{x}$ denoted by 
$e_2$ and orthogonal unit vector $e_1$.  Then, let $(A_{(\theta, x)} \cdot 
A_{(\theta^{\prime}, x^{\prime})})^{-1}(e_1) = v$.  Then, we want a 
pseudo-rotation on $W$ that sends $e_1 \mapsto v$.  If $v \neq -e_1$, then 
reflection about the complex line spanned by $e_1 + v$, is a 
pseudo-rotation by $\pi$ and sends $e_1$ to $v$.  If $v = -e_1$, then 
reflection about the complex line spanned by $e_2$ works instead.  If we 
denote this reflection by 
$A_{(\pi, \tilde{x}^{\prime})}$, then $A_{(\theta, x)} \cdot 
A_{(\theta^{\prime}, x^{\prime})} \cdot A_{(\pi, \tilde{x}^{\prime})}$ is a 
unitary transformation which fixes $e_1$ and is hence a pseudo-rotation 
about the line $<e_1>$ and so sends $e_2  = \tilde{x}$ to 
$e^{i\tilde{\theta}} \tilde{x}$ for some angle 
$\tilde{\theta}$. Thus,   
$$ A_{(\theta, x)} \cdot A_{(\theta^{\prime}, x^{\prime})} \,\, = \,\,  
A_{(\tilde{\theta}, \tilde{x})}  \cdot A_{(\tilde{\theta}^{\prime}, 
\tilde{x}^{\prime})}\, $$
giving the result. 
\end{proof}
\par 
This allows us to rewrite a product of pseudo-rotations as a product where 
the lines are minimally contained in successively larger subspaces of the 
flag. \par 
\subsubsection*{Whitehead Algorithm for ordered factorization of Unitary 
matrices} 
Given $B \in SU_n$, we may write $B = \prod_{j = 1}^{k} A_{(\theta_j, 
x_j)}$, with the $\{x_j \}$ an orthonormal set of vectors with say $x_j 
\in_{\min} \C^{m_j}$.  Note that $k$ may be less than $n$ as we may 
exclude the eigenvectors $x_j^{\prime}$ with eigenvalue $1$, which give 
$A_{(0, x_j^{\prime})} = I_n$.  Then, we may use Lemma \ref{Lem3.1} to 
reduce the product into a standard form as follows. 
For the sequence $(m_1, m_2, \dots , m_k)$, we find the largest $j$ so that 
$m_j \geq m_{j+1}$.  If $m_j > m_{j+1}$ then by 1) of Lemma \ref{Lem3.1}, 
we may replace $A_{(\theta_j, x_j)} \cdot A_{(\theta_{j+1}, x_{j+1})}$ by 
$A_{(\theta_{j+1}, x_{j+1})} \cdot A_{(\theta_j, \tilde{x}_j)}$, with 
$\tilde{x}_j \in_{\min} \C^{m_j}$.  If instead $m_j = m_{j+1}$, then by 2) of 
Lemma \ref{Lem3.1}, we may instead replace the product by 
$A_{(\theta_j^{\prime}, x_j^{\prime})} \cdot A_{(\theta_{j+1}^{\prime}, 
x_{j+1}^{\prime})}$, where $x_{j+1}^{\prime} \in_{\min} \C^{m_j}$ and 
$x_{j}^{\prime} \in_{\min} \C^{\ell}$, where $\ell < m_j$ satisfies $(<x_j, 
x_{j+1}> \cap\, \C^{m_j}) \subset_{\min} \C^{\ell}$.  \par
Then, we relabel the angles and vectors to be $(\theta_j, x_j)$, where now 
$m_j < m_{j+1} < \cdots < m_k$.  Then, we may repeat the procedure until 
we obtain $m_1 < m_2 < \cdots < m_k$.  We summarize the final result of 
this process.
\begin{Lemma}
\label{Lem3.3} 
Given $B \in SU_n$, it may be written as a product 
\begin{equation}
\label{Eqn3.3}
B \, \, = \,\,  A_{(\theta_1, x_1)} \cdot A_{(\theta_2, x_2)} \cdots 
A_{(\theta_k, x_k)}\, ,
\end{equation}
 with 
$x_j \in_{\min} \C^{m_j}$ and $1 \leq m_1 < m_2 < \cdots < m_k \leq n$, 
and each $\theta_i \not \equiv 0  \,\mod 2\pi$.
\end{Lemma}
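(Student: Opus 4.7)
My plan is to start from the spectral decomposition of the unitary matrix $B$, write it as a product of pseudo-rotations, and then apply the Whitehead-style rewriting rules of Lemma \ref{Lem3.1} iteratively until the sequence of minimal-containment indices $m_j$ becomes strictly increasing. The one substantive issue is termination.

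Since $B \in SU_n$ is unitary, the spectral theorem provides an orthonormal basis of eigenvectors $v_1, \dots, v_n$ with unit eigenvalues $\gl_j = e^{i\theta_j}$. Because the one-dimensional eigenspaces $\langle v_j\rangle$ are mutually orthogonal, the pseudo-rotations $A_{(\theta_j, v_j)}$ pairwise commute and their product equals $B$. After discarding factors whose angle is a multiple of $2\pi$ (these are the identity), I obtain an initial decomposition $B = \prod_{j=1}^{k} A_{(\theta_j, x_j)}$ in which each $\theta_j \not\equiv 0 \mod 2\pi$ and $x_j \in_{\min} \C^{m_j}$ for some sequence $(m_1, \dots, m_k)$ of integers in $\{1, \dots, n\}$.

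I then iterate the rewriting described just before the lemma. At each step, locate the largest $j$ with $m_j \geq m_{j+1}$. If $m_j > m_{j+1}$, apply part (1) of Lemma \ref{Lem3.1} to transpose the two factors, producing a pair whose indices at positions $j$ and $j+1$ are $m_{j+1}$ and $m_j$ respectively. If $m_j = m_{j+1}$, apply part (2) to replace the pair by two pseudo-rotations whose indices are $\ell$ and $m_j$ with $\ell < m_j$. Any resulting factor whose angle happens to be a multiple of $2\pi$ is discarded; the degenerate sub-case $\langle x\rangle = \langle x'\rangle$ of part (2) collapses to a single pseudo-rotation with summed angles, which is either dropped or retained as one factor.

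The hard part is proving termination, which I would handle via the well-founded lexicographic ordering on the pair
\[
\Phi \,=\, \bigl( \,\text{multiset }\{m_1, \dots, m_k\},\; \#\{(i,i') : i<i',\ m_i > m_{i'}\}\, \bigr),
\]
whose first coordinate is well-founded under the standard multiset order on finite multisets of natural numbers and whose second coordinate is a nonnegative integer. A part-(1) rewrite preserves the multiset and swaps a single adjacent strictly-descending pair; a direct bookkeeping over position pairs shows the strict inversion count drops by exactly one. A part-(2) rewrite replaces one occurrence of $m_j$ by $\ell < m_j$, strictly decreasing the multiset. Hence $\Phi$ strictly decreases at every step, forcing the procedure to halt. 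At termination no $j$ with $m_j \geq m_{j+1}$ survives, yielding $1 \leq m_1 < m_2 < \cdots < m_k \leq n$ with each retained $\theta_j \not\equiv 0 \mod 2\pi$, which is the factorization \eqref{Eqn3.3}.
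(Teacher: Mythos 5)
Your proposal is correct and follows essentially the same route as the paper, which obtains the initial factorization from the spectral decomposition of $B$ and then runs the Whitehead rewriting procedure of Lemma \ref{Lem3.1} until the indices are strictly increasing. The only addition is your explicit termination argument via the lexicographic pair (multiset of indices, inversion count), which is sound and supplies a detail the paper leaves implicit in its phrase ``we may repeat the procedure until we obtain $m_1 < m_2 < \cdots < m_k$.''
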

\par
If $B$ has the form given in Lemma \ref{Lem3.3} with $m_1 > 1$, then we 
will say that $B$ has Schubert type $\bm = (m_1, m_2, \cdots , m_k)$ and 
write $\bm(B) = \bm$.  If instead $m_1 = 1$ then as $\det(B) = 1$
$$B = A_{(-\tilde{\theta}, e_1)} \cdot A_{(\theta_2, x_2)} \cdot 
A_{(\theta_2, x_2)} \cdots A_{(\theta_k, x_k)}$$  
where $\tilde{\theta} \equiv \sum_{j = 2}^{k} \theta_j \mod 2\pi$ and we 
instead denote $\bm(B) = (m_2, \cdots , m_k)$.  For the case of an empty 
sequence with $k = 0$, we associate the unique identity element $I$. We 
refer to the tuple $\bm = (m_1, m_2, \cdots , m_k)$ as the Schubert 
symbol of $B$.  It will follow from Theorem \ref{Thm3.5} that this 
representation is unique.  
\par
There is also an alternative way to obtain a factorization \eqref{Eqn3.3} 
where instead $x_j \in_{\min} \C^{m_j^{\prime}}$ with a decreasing sequence
$m^{\prime}_1 > m^{\prime}_2 > \cdots > m^{\prime}_k$.  In fact, if we give a 
representation for $B^{-1}$ as in \eqref{Eqn3.3} with the $m_i$ increasing, 
then taking inverses gives a product of $A_{(\theta_i, x_i)}^{-1} = A_{(-
\theta_i, x_i)}$ in decreasing order.  There is a question for a given $B \in 
SU_n$ about the relation between the increasing and decreasing symbols.  
The relation between these is a consequence of the following lemma which is 
basically that given in \cite[Prop. 4.5]{KM} and is a consequence of the 
uniqueness of the Schubert symbol for one direction of ordering. 
\begin{Lemma}
\label{Lem3.4}
Suppose $x_i \in_{\min} \C^{m_i}$, for $1 \leq i \leq k$ and $m_1 < m_2 < 
\cdots < m_k$; and $y_j \in_{\min} \C^{m_j^{\prime}}$, for $1 \leq j \leq 
k^{\prime}$ and $m_1^{\prime} < m_2^{\prime} < \cdots < m_k^{\prime}$.  
Also, suppose $\theta_i, \theta_i^{\prime} \not \equiv 0 \, \mod 2\pi$ for 
each $i$.  Let $A_i = A_{(\theta_i, x_i)}$ and $B_j = A_{(\theta_j^{\prime}, 
y_j)}$.  If
$$A_{1} \cdot A_{2} \cdots A_{k} \,\, = \,\, B_{k^{\prime}} \cdot 
B_{k^{\prime}-1} \cdots B_{1}$$
then the following hold:
\begin{itemize}
\item[a)]  $k = k^{\prime}$ and $(m_1, m_2,\dots , m_k) =  (m_1^{\prime}, 
\dots , m_{k^{\prime}}^{\prime})$;
\vspace{1ex}
\item[b)] $A_{i} \,\, = \,\, B_{1}^{-1} \cdot B_{2}^{-1} \cdots  B_{i-1}^{-1} 
\cdot B_{i} \cdot B_{i-1} \cdots B_{1}$ for $1 \leq i \leq k$; and
\vspace{1ex}
\item[c)]  $B_{i} \,\, = \,\, A_{1} \cdot A_{2} \cdots  A_{i-1} \cdot A_{i} 
\cdot A_{i-1}^{-1} \cdots A_{1}^{-1}$ for $1 \leq i \leq k$.
\end{itemize}
In the cases of $k = 1$ in b) and c), we let $A_0 = B_0 = I_m$ so they are 
understood to be $A_1 = B_1$. 
\end{Lemma}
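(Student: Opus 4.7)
The plan is to derive the lemma from uniqueness of the increasing ordered Schubert factorization (the content of Theorem 3.5), using the formulas in (b) and (c) as templates for converting between the two orderings.  The key observation is that conjugation of a pseudo-rotation by a product of pseudo-rotations whose axes all lie in a strictly smaller stage of the flag is again a pseudo-rotation of the same minimal index.

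For parts (a) and (b), I would set $C_0 = I_n$ and $C_i = B_i B_{i-1} \cdots B_1$ for $1 \leq i \leq k'$, and define
$$A_i' \,\, := \,\, C_{i-1}^{-1} \cdot B_i \cdot C_{i-1}.$$
By property iii) of pseudo-rotations, $A_i'$ is the pseudo-rotation $A_{(\theta_i',\, C_{i-1}^{-1} y_i)}$.  Using the recursion $C_i = B_i C_{i-1}$, a telescoping induction gives
$$A_1' \cdot A_2' \cdots A_{k'}' \,\, = \,\, B_{k'} \cdot B_{k'-1} \cdots B_1 \,\, = \,\, A_1 \cdot A_2 \cdots A_k.$$
To identify the Schubert type of $A_i'$, I observe that for $j \leq i-1$ the axis $\langle y_j\rangle \subset \C^{m_j'} \subset \C^{m_i' - 1}$ (since the $m_l'$ are strictly increasing positive integers), so $B_j$ fixes $\langle y_j\rangle^{\perp}$ and in particular preserves both $\C^{m_i'}$ and $\C^{m_i' - 1}$.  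Hence $C_{i-1}$ and its inverse preserve both subspaces, forcing $C_{i-1}^{-1} y_i \in_{\min} \C^{m_i'}$.  Thus $(A_1',\ldots,A_{k'}')$ is an increasing ordered factorization of $A_1 \cdots A_k$ with Schubert symbol $(m_1',\ldots,m_{k'}')$, and uniqueness of the increasing Schubert symbol forces $k = k'$, $(m_1,\ldots,m_k) = (m_1',\ldots,m_{k'}')$, and $A_i = A_i'$.  These are exactly (a) and (b).

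For (c), I use the dual construction: set $D_0 = I_n$, $D_i = A_1 A_2 \cdots A_i$, and define $B_i' := D_{i-1}\cdot A_i\cdot D_{i-1}^{-1}$.  By the symmetric flag-preservation argument (now with the axes $\langle x_j\rangle \subset \C^{m_i - 1}$ for $j \leq i-1$), $B_i'$ is a pseudo-rotation with axis in $_{\min}\C^{m_i}$, and an analogous telescoping yields $B_k' \cdots B_1' = A_1 \cdots A_k = B_{k'} \cdots B_1$.  Taking inverses of both sides converts this into an equality of two increasing factorizations (the tuples of minimal indices already agreeing by (a)), so uniqueness gives $(B_i')^{-1} = B_i^{-1}$ and hence $B_i' = B_i$, proving (c).  The main (and essentially only) delicate point is the flag-preservation argument identifying the minimal index of each conjugated pseudo-rotation; everything else reduces to a telescoping identity and an appeal to uniqueness.
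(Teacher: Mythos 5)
Your proposal is correct and follows essentially the same route as the paper: the paper likewise defines the conjugates $B_1^{-1}\cdots B_{i-1}^{-1}B_iB_{i-1}\cdots B_1$, identifies them as pseudo-rotations with axes still minimally in $\C^{m_i^{\prime}}$ via the same flag-preservation observation, telescopes, and appeals to uniqueness of the increasing ordered factorization (Theorem \ref{Thm3.5} together with Remark \ref{Rem3.6}) for a) and b), then obtains c) by the dual/inverse argument. Your write-up is, if anything, slightly more explicit about the flag-preservation step.
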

\begin{proof}
We let $C_i$ denote the RHS of the equation in b) but for $1 \leq i \leq 
k^{\prime}$.  Since $B_{i-1} \cdot B_{i-2} \cdots B_{1}$ leaves 
pointwise invariant $(\C^{m_i^{\prime}})^{\perp}$, we conclude $B_{i-1} 
\cdot B_{i-2} \cdots B_{1}(y_i) = y_i^{\prime} \in_{min} 
\C^{m_i^{\prime}}$; hence by property iii) for pseudo rotations, $C_i = 
A_{(\theta_i^{\prime}, y_i^{\prime})}$.  Thus, we have that $A$ has two 
different Schubert factorizations with increasing Schubert symbols $(m_1, 
m_2,\dots , m_k)$ and  $(m^{\prime}_{1}, \dots , 
m ^{\prime}_{k^{\prime}})$.  By the uniqueness of the Schubert symbols, we 
obtain a). \par
Furthermore, by the uniqueness of the Schubert decomposition stated in 
Theorem \ref{Thm3.5} (for increasing Schubert decomposition) and Remark 
\ref{Rem3.6}, it then furthermore follows that $A_i = C_i$ for all $i$ so b) 
holds.  Lastly, the uniqueness of the increasing order Schubert 
decomposition implies by taking inverses that we also have uniqueness of 
decreasing order Schubert decomposition.  Then, the corresponding analogue 
of the argument for b) yields c).
\end{proof}
We then have the following corollary
\begin{Corollary}
\label{Cor3.5}
If $B \in SU_n$, then 
$$\bm(B) \,\, = \,\,  \bm(B^{-1})  \,\, = \,\,  \bm(\overline{B}) \,\, = \,\,  
\bm(B^T)\, . $$
\end{Corollary}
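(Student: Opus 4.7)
The plan is to take the increasing Schubert factorization of $B$ provided by Lemma \ref{Lem3.3},
$$
B \,=\, A_{(\theta_1, x_1)} \cdot A_{(\theta_2, x_2)} \cdots A_{(\theta_k, x_k)}, \qquad x_j \in_{\min} \C^{m_j},\ m_1 < m_2 < \cdots < m_k,
$$
apply the three operations $B \mapsto \overline B,\, B^T,\, B^{-1}$ term by term using the formulas in property iv) of pseudo-rotations ($\overline{A_{(\theta,x)}} = A_{(-\theta,\bar x)}$, $A_{(\theta,x)}^T = A_{(\theta,\bar x)}$, $A_{(\theta,x)}^{-1} = A_{(-\theta,x)}$), and then read off the Schubert symbol either from uniqueness of the increasing Schubert decomposition (invoked in the proof of Lemma \ref{Lem3.4}) or from Lemma \ref{Lem3.4} itself. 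The key observation to carry through is the one recorded just before Lemma \ref{Lem3.1}: complex conjugation preserves minimal containment, so $\bar x_j \in_{\min} \C^{m_j}$ whenever $x_j \in_{\min} \C^{m_j}$.

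For $\overline B$, substituting termwise yields
$$
\overline B \,=\, A_{(-\theta_1, \bar x_1)} \cdot A_{(-\theta_2, \bar x_2)} \cdots A_{(-\theta_k, \bar x_k)},
$$
with $\bar x_j \in_{\min} \C^{m_j}$ and $-\theta_j \not\equiv 0 \,\mod 2\pi$; this is already an increasing Schubert factorization for $\overline B$, so uniqueness forces $\bm(\overline B) = (m_1, \ldots, m_k) = \bm(B)$.

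For $B^T$ and $B^{-1}$, the operations also reverse the order of the product, producing
$$
B^T = A_{(\theta_k, \bar x_k)} \cdots A_{(\theta_1, \bar x_1)} \quad\text{and}\quad B^{-1} = A_{(-\theta_k, x_k)} \cdots A_{(-\theta_1, x_1)},
$$
each a \emph{decreasing} factorization with underlying sequence $(m_1, \ldots, m_k)$. I would then compare each with the corresponding increasing Schubert factorization (which exists by Lemma \ref{Lem3.3} applied to $B^T$ or $B^{-1}$) and invoke Lemma \ref{Lem3.4}, whose hypothesis is exactly the equality of an increasing factorization with a decreasing one; its conclusion (a) identifies the two sequences of minimal indices and forces $\bm(B^T) = \bm(B^{-1}) = (m_1, \ldots, m_k)$.

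The only subtlety worth flagging is the case $m_1 = 1$, where the paper's convention drops the leading factor from $\bm(B)$. But $x_1 \in_{\min} \C^1$ forces $\langle x_1\rangle = \langle e_1\rangle$, so the first pseudo-rotation has the form $A_{(\theta_1, e_1)}$; since $\bar e_1 = e_1$, the corresponding leading factor in each of the derived factorizations of $\overline B$, $B^T$, and $B^{-1}$ is again a pseudo-rotation about $\langle e_1\rangle$ with minimal index $1$. Thus the reduction convention applies uniformly across all four matrices, and the reduced tuples $(m_2, \ldots, m_k)$ coincide. No real obstacle remains beyond carefully tracking the order reversal under transposition and inversion; the argument is essentially a bookkeeping application of Lemma \ref{Lem3.4}.
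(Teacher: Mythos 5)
Your proposal is correct and follows essentially the same route as the paper: factor $B$ via Lemma \ref{Lem3.3}, apply property iv) of pseudo-rotations termwise, and use the increasing/decreasing comparison of Lemma \ref{Lem3.4} (the paper handles $B^T$ by writing it as $\overline{B^{-1}}$ rather than transposing termwise, but this is an immaterial difference). Your explicit treatment of the $m_1 = 1$ normalization is a small bookkeeping point the paper leaves implicit.
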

\begin{proof}
Given an increasing Schubert factorization
$B = A_{1} \cdot A_{2} \cdots A_{k}$ for $A_i = A_{(\theta_i, x_i)}$ with 
Schubert symbol $\bm = (m_1, m_2,\dots , m_k)$, then $B^{-1}  = A_{k} 
\cdot A_{k-1} \cdots A_{1}$ is a Schubert factorization for decreasing 
order.  This has the decreasing Schubert symbol $(m_k, m_{k-1},\dots , 
m_1)$, and hence $B^{-1}$ has the same increasing Schubert symbol $\bm$. \par
Next, $\overline{B} = \overline{A_{1}} \cdot \overline{A_{2}} \cdots 
\overline{A_{k}}$, and by property iv) of pseudo-rotations $\overline{A_{i}} 
= A_{(-\theta_i, \bar{x_i})}$ so the Schubert Symbol is the same.  \par 
Lastly, as $B \in SU_n$, $B^T = \overline{B^{-1}}$, which combined with the 
two other properties implies that it has the same Schubert symbol.
\end{proof}
\par
\begin{Remark}
\label{Rem3.8}
We will use the increasing order for the Schubert symbol to be in agreement 
with that used for the Schubert decomposition as in 
Milnor-Stasheff \cite{MS}.  In fact, if $A = A_{1} \cdot A_{2} \cdots A_{k}$ 
for $A_i = A_{(\theta_i, x_i)}$ with Schubert symbol $\bm = (m_1, 
m_2,\dots , m_k)$, and we let $V = \C <x_1, \dots , x_k>$, then $\dim_{\C} 
V \cap \C^{m_i} = i$ so $V$ as an element of the Grassmannian 
$G_k(\C^n)$ would also have Schubert symbol $\bm$.  In \cite{KM}, the 
decreasing order Schubert symbol is used; however, we easily change 
between the two. 
\end{Remark}
 \par
We next state the form of the Schubert decomposition given in terms of the 
Schubert factorization giving the Schubert types for elements of $SU_n$.
\subsection*{Schubert Decomposition for $SU_n$} \hfill
\par

In describing the Schubert decomposition for $SU_n$, we are giving a version 
of that contained in \cite{W}, \cite{Mi}, \cite{Y} and summarized in \cite{KM} 
(but using instead an increasing order). \par
Given an increasing sequence $m_1 < m_2 < \dots < m_k$ with $1 < m_1 $ 
and $m_k \leq n$, which we denote by $\bm = (m_1, m_2, \dots , m_k)$, we 
define a map 
$$ \psi_{\bm} : S \CP^{m_1 - 1} \times S \CP^{m_2 - 1} \times \cdots \times S 
\CP^{m_k - 1} \longrightarrow SU_n\, , $$
where $SX$ denotes the suspension of $X$.  This is given as follows: \par
First, we define a simpler map for $m \leq n$, $I = [0, 1]$ and a complex line 
$L \subset \C^m$,
$\tilde {\psi}_m : I \times \CP^{m-1} \to SU_n$ defined by $\tilde 
{\psi}_m(t, L) = A_{(2\pi t, L)}$.  Since $A_{(0, L)} = A_{(2\pi, L)} = I_n$ 
independent of $L$, this descends to a map $\psi_m : S\CP^{m-1} \to 
SU_n$.  Then, we define 
\begin{align}
\label{Eqn3.4}
\psi_{\bm}((t_1, L_1), \dots  , (t_k, L_k)) \,\,  &= \,\, A_{(-2\pi \tilde{t}, 
e_1)}\cdot {\psi}_{m_1}(t_1, L_1) \cdot  {\psi}_{m_2}(t_2, L_2) \cdots 
{\psi}_{m_k}(t_k, L_k)  \notag \\ 
 \,\,  &= \,\, A_{(-2\pi \tilde{t}, e_1)}\cdot A_{(2\pi t_1, L_1)} \cdot 
A_{(2\pi t_2, L_2)} \cdots A_{(2\pi t_k, L_k)} \, . 
\end{align}
where $\tilde{t} = \sum_{j = 1}^{k}t_j$.  We note that the first factor 
$A_{(-2\pi \tilde{t}, e_1)}$ ensures the product is in $SU_n$ as in the 
splitting for \eqref{CD2.1}.  \par
We observe that each $I \times \CP^{m-1}$ has an open dense cell 
$$ E_m = (0, 1) \times \{x = (x_1, \dots, x_m, 0 , \dots 0) : (x_1, \dots, 
x_m) \in S^{2m-1} \mbox{ and } x_m > 0 \} $$
which is of dimension $2m-1$ (as $x_m = \sqrt{1 - \sum_{j = 1}^{m-1} 
|x_j|^2}$ \,).  Also, if $x = (x_1, \dots, x_m, 0 , \dots 0)$ with $x_m > 0$, 
then $x \in_{\min} \C^m$.  \par 
We now introduce some notation and denote 
$$\tilde{S}_{\bm} = S \CP^{m_1 - 1} \times S \CP^{m_2 - 1} \times \cdots \times 
S \CP^{m_k - 1}\, ;$$
also, we consider the corresponding cell $E_{\bm} = E_{m_1} \times 
E_{m_2} \times \cdots \times E_{m_k}$, and the image $S_{\bm} = 
\psi_{\bm}(E_{\bm})$ in $SU_n$.  Then, $E_{\bm}$ is an open dense cell in 
$\tilde{S}_{\bm}$ with $\dim_{\R} E_{\bm} = \sum_{j = 1}^{k} (2m_j - 1) =  
2 | \bm | - \ell(\bm)$ for $| \bm | =\sum_{j = 1}^{k} m_j$ and $\ell(\bm) = 
k$, which we refer to as the length of $\bm$.  Also, the image $S_{\bm} = 
\psi_{\bm}(E_{\bm})$ consists of elements of $SU_n$ of Schubert type 
$\bm$.  Furthermore, $\overline{S_{\bm}} = \psi_{\bm}(\tilde{S}_{\bm})$.  
Then the results of Whitehead, Miller and Yokota together give the following 
Schubert decomposition of $SU_n$.
\begin{Thm}
\label{Thm3.5}
The Schubert decomposition of $SU_n$ has the following properties:
\begin{itemize}
\item[a)] $SU_n$is the disjoint union of the $S_{\bm}$ as $\bm = (m_1, 
\dots , m_k)$ varies over all increasing sequences with $1 < m_1$, $m_k 
\leq n$, and $0 \leq k \leq n-1$.
\item[b)] The map $\psi_{\bm} : E_{\bm} \to S_{\bm}$ is a homeomorphism.
\item[c)] $(\overline{S_{\bm}}\backslash S_{\bm}) \subset 
\cup_{\bm^{\prime}} S_{\bm^{\prime}}$, where the union is over all 
$S_{\bm^{\prime}}$ with $\dim S_{\bm^{\prime}} < \dim S_{\bm}$.
\item[d)] the Schubert cells $S_{\bm}$ are preserved under taking  
inverses, conjugates, and transposes.
\end{itemize}
\end{Thm}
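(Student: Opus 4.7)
The plan is to prove parts (a)--(d) sequentially; the bulk of the work goes into the uniqueness in (b).

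Part (a) follows from Lemma \ref{Lem3.3}: every $B \in SU_n$ admits a standard factorization of the form \eqref{Eqn3.3}. After discarding trivial factors (those with $\theta_j \equiv 0 \pmod{2\pi}$) and absorbing any $m_1 = 1$ factor into the normalization term $A_{(-2\pi\tilde t, e_1)}$ (using $\det B = 1$), we see $B \in \psi_{\bm}(E_{\bm}) = S_{\bm}$ for some admissible $\bm$ (or $B = I$, the empty symbol). Disjointness of the $S_{\bm}$ follows once (b) is in hand.

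For (b), I would argue by induction on $n$, peeling off the last factor. The computational core is that each $A_{(\theta_j, x_j)}$ with $x_j \in_{\min} \C^{m_j}$ fixes $(\C^{m_j})^{\perp}$ pointwise, so a product $B = A_{(\theta_1, x_1)} \cdots A_{(\theta_k, x_k)}$ with $m_1 < \cdots < m_k$ satisfies $B e_j = e_j$ for $j > m_k$, and a direct calculation gives $\langle B e_{m_k}, e_{m_k}\rangle = 1 - (1 - e^{i\theta_k}) x_{k, m_k}^2 \neq 1$ (using the slice convention $x_{k,m_k} > 0$ real). Hence $m_k = \max\{m : B e_m \neq e_m\}$ is intrinsic to $B$. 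Setting $u = B^{-1} e_{m_k}$ and using that the prefix $D = A_{(\theta_1, x_1)} \cdots A_{(\theta_{k-1}, x_{k-1})}$ also fixes $e_{m_k}$, one finds
\[
u - e_{m_k} \;=\; -(1 - e^{-i\theta_k})\, x_{k, m_k}\, x_k,
\]
which forces $L_k = \langle u - e_{m_k}\rangle$; the $m_k$-th component identity $1 - u_{m_k} = (1 - e^{-i\theta_k})\, x_{k, m_k}^2$ recovers $\theta_k \in (0, 2\pi)$ from $\arg(1 - u_{m_k}) \in (-\pi/2, \pi/2)$, and then $x_k$ is pinned down by the slice condition. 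Peeling off $A_{(\theta_k, x_k)}$ leaves $D \in U_{m_k - 1}$, and the induction hypothesis supplies uniqueness for $D$; continuity of $\psi_{\bm}^{-1}$ follows from continuous dependence of $u$, $L_k$, and $\theta_k$ on $B$.

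For (c), a point of $\overline{E_{\bm}} \setminus E_{\bm}$ has either some $t_j \in \{0, 1\}$ (so $A_{(\theta_j, x_j)} = I$ and the factor drops out) or some limiting $x_j$ with $(x_j)_{m_j} = 0$ (so $L_j \subset \C^{m_j-1}$). Iterated application of Lemma \ref{Lem3.1} rewrites the resulting product in standard form with a new Schubert symbol $\bm'$ satisfying $2|\bm'| - \ell(\bm') < 2|\bm| - \ell(\bm)$, placing the limit in a strictly lower-dimensional cell. For (d), invariance under conjugation is immediate from property iv) combined with the observation that the slice condition is preserved ($\overline{x_{j, m_j}} = x_{j, m_j}$ since it is real positive). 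Invariance under inverses follows from Lemma \ref{Lem3.4}, now applicable since (b) is established: $B^{-1} = A_{(-\theta_k, x_k)} \cdots A_{(-\theta_1, x_1)}$ is a decreasing Schubert factorization with indices $m_k > \cdots > m_1$, so by Lemma \ref{Lem3.4}(a) the unique increasing factorization of $B^{-1}$ also has symbol $\bm$. Transposition reduces to inverse plus conjugate via $B^T = \overline{B^{-1}}$ for unitary $B$.

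The main obstacle is the explicit extraction of $(\theta_k, x_k)$ from $B$ in (b). One must verify that $\Re(1 - u_{m_k}) > 0$ (automatic from $u \neq e_{m_k}$, which holds because $m_k$ is the largest moved index) so that $\arg(1 - u_{m_k}) \in (-\pi/2, \pi/2)$ uniquely specifies $\theta_k \in (0, 2\pi)$ via the bijection $\theta \mapsto 1 - e^{-i\theta}$ from $(0, 2\pi)$ onto the circle $|z-1| = 1$ minus the origin; and the slice normalization must be checked to correctly fix the $U(1)$-phase ambiguity of the line representative. The closure analysis in (c) is conceptually routine but calls for systematic bookkeeping on how Lemma \ref{Lem3.1} handles the various degenerations.
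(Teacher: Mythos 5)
Your argument is essentially correct, but it is worth saying up front that the paper does not actually prove a)--c): it invokes the results of Whitehead, Miller and Yokota (as organized by Kadzisa--Mimura) and only observes that d) follows from Corollary \ref{Cor3.5}. What you have written is a self-contained reconstruction of the classical uniqueness argument, and the computational core is right: for $B = A_{(-2\pi\tilde t, e_1)}A_{(\theta_1,x_1)}\cdots A_{(\theta_k,x_k)}$ with $x_j \in_{\min}\C^{m_j}$ and $m_1 < \cdots < m_k$, the index $m_k$ is recovered as the largest $m$ with $Be_m \neq e_m$, and $u = B^{-1}e_{m_k}$ satisfies $u - e_{m_k} = -(1-e^{-i\theta_k})x_{k,m_k}x_k$ with $1 - u_{m_k} = 2\sin^2(\theta_k/2)\,x_{k,m_k}^2\,e^{i(\pi/2-\theta_k/2)}$, so $\theta_k$, the slice representative of $L_k$, and hence the whole factor are pinned down. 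Two points in your write-up deserve tightening. First, after peeling off $A_{(\theta_k,x_k)}$ the remaining product $D$ has $\det D = e^{-i\theta_k} \neq 1$, so the induction cannot be run inside $SU_{m_k-1}$; you must formulate and prove the uniqueness statement for $U_n$ (where $m_1 = 1$ is allowed, cf.\ Remark \ref{Rem3.6}) and then specialize. Second, you are right to be careful about logical order with Lemma \ref{Lem3.4}: in the paper that lemma is itself proved using the uniqueness in Theorem \ref{Thm3.5}, so your use of it for the inverse-invariance in d) is legitimate only because b) has already been established independently --- which your argument does. With those caveats, your approach buys a proof where the paper offers a citation; the closure analysis in c) indeed reduces to checking that each degeneration ($t_j \in \{0,1\}$, or $(x_j)_{m_j}\to 0$ followed by the re-sorting of Lemma \ref{Lem3.1}, including the collision case 2) which drops the total index sum) strictly decreases $2|\bm|-\ell(\bm)$.
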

We note that d) follows from Corollary \ref{Cor3.5}.  \par
Hence, the Schubert decomposition by the cells $S_{\bm}$ is a cell 
decomposition of $SU_n$.  The cells $S_{\bm}$ are referred to as the {\em 
Schubert cells of $SU_n$}.  We note that as $\overline{S_{\bm}}$ is the 
image of the \lq\lq singular manifold\rq\rq\, $\tilde{S}_{\bm}$ which 
has a Borel-Moore fundamental class, we can describe in \S \ref{S:sec5} the 
homology of $SU_n$ in terms of the images of these fundamental classes.
\begin{Remark}
\label{Rem3.6}
There is an analogous Schubert decomposition for $U_n$ where the Schubert 
symbols can include $m_1 = 1$.  
\end{Remark}
\par
\section{Schubert Decomposition for Symmetric Spaces }
\label{S:sec4} 
\par
For the Milnor fibers for the varieties of singular matrices, we have compact 
models which are symmetric spaces.  To give the Schubert decomposition of 
these, we use the results of Kadzisa and Mimura \cite{KM} which modifies 
the Schubert decomposition given for $SU_n$ to apply to the Cartan models 
for the symmetric spaces.  We have given the Schubert decomposition for 
$SU_n$ in the previous section so we will consider the form it takes for both 
$SU_n/SO_n$ and $SU_{2n}/Sp_n$. \par
We again use the standard flag $0 \subset \C \subset \C^2 \subset \dots 
\subset \C^n$ and the same notation for pseudo-rotations as in \S 
\ref{S:sec3}.  
\vspace{2ex}
\flushpar
\subsection*{Schubert Decomposition for $SU_n/SO_n$} \hfill
\par
We consider an element of the Cartan model $\cC_n^{(sy)}$ for 
$SU_n/SO_n$.  If $B \in \cC_n^{(sy)}$ we have that $B \in SU_n$ and $B = 
B^T$.  By Lemma \ref{Lem2.1}, there is an orthonormal basis of real 
eigenvectors $x_i$ for $B$.   Hence, each $<x_i> \in \RP^{n-1}$.  Then $B$ 
can be written as a product of pseudo-rotations about complexifications of 
real hyperplanes $\C<x_i>^{\perp}$.  We will refer to such a 
pseudo-rotation $A_{(\theta, x)}$ for a real vector $x$ as an 
$\R$-pseudo-rotation.  There are two problems in trying to duplicate the 
reasoning used for the Schubert decomposition for $SU_n$.  First, there is 
no analogue of Lemma \ref{Lem3.1} for products of 
$\R$-pseudo-rotations.  Second, it need not be true that the ordered 
product of $\R$-pseudo-rotations $A_{(\theta, x_i)}$ is an element of 
$\cC_n^{(sy)}$ if the vectors $x_i$ are not mutually orthogonal.  \par
The solution obtained by Kadzisa-Mimura is to use instead  \lq\lq ordered 
symmetric factorizations\rq\rq\, by $\R$-pseudo-rotations.  Specifically it 
will be a product resulting from the successive application of Cartan 
conjugates by $\R$-pseudo rotations, which always yields elements of 
$\cC_n^{(sy)}$.  \par
  Then, in describing the Schubert decomposition for $SU_n/SO_n$, we are 
giving a version of that contained in \cite{KM}, except we again define maps 
from products of cones on real projective spaces whose open cells give the 
cell decomposition.  \par
Given an increasing sequence $m_1 < m_2 < \dots < m_k$ with $1 <  
m_1$ and $m_k \leq n$, which we denote by $\bm = (m_1, m_2, \dots , 
m_k)$ we define a map 
$$ \psi_{\bm}^{(sy)} : (C \RP^{m_1 - 1}) \times (C \RP^{m_2 - 1}) \times 
\cdots \times (C \RP^{m_k - 1}) \longrightarrow SU_n\, , $$
with $CX = (I \times X)/(\{0\} \times X)$ for $I = [0, 1]$, denoting the cone 
on $X$.  
This is given as follows: \par
First, we define a simpler map for $m \leq n$, $I = [0, 1]$ and a real line 
$L \subset \R^m$, 
$\tilde {\psi}_m^{(sy)} : C\RP^{m-1} \to SU_n$ defined by $\tilde 
{\psi}^{(sy)}_m(t, L) = A_{(\pi t, L_{\C})}$, with $L_{\C}$ denoting the 
complexification of the real line $L$.  Note this factors through the cone as 
$A_{(0, L_{\C})} = Id$, independent of $L$.  We will henceforth abbreviate 
this to $A_{(\pi t, L)}$.  
Then, we extend this to a map 
$$ \tilde{\psi}^{(sy)}_{\bm} : \prod_{i = 1}^{k} (C\RP^{m_i-1})\,\,  \longrightarrow 
\,\, SU_n $$
 defined by
\begin{align}
\label{Eqn4.1}
\tilde{\psi}^{(sy)}_{\bm}((t_1, L_1), \dots  , (t_k, L_k)) \,\,  &= \,\, A_{(-\pi 
\tilde{t}, e_1)}\cdot {\psi}_{m_1}(t_1, L_1) \cdot  {\psi}_{m_2}(t_2, L_2) 
\cdots {\psi}_{m_k}(t_k, L_k)  \notag \\ 
 \,\,  &= \,\, A_{(-\pi \tilde{t}, e_1)}\cdot A_{(\pi t_1, L_1)} \cdot A_{(\pi 
t_2, L_2)} \cdots A_{(\pi t_k, L_k)} \, . 
\end{align}
where $\tilde{t} = \sum_{j = 1}^{k}t_j$.  We note that the first factor 
$A_{(-\pi \tilde{t}, e_1)}$ ensures the product is in $SU_n$ as in the 
splitting for \eqref{CD2.1}.  
Then we define
\begin{equation}
\label{Eqn4.2}
\psi_{\bm}^{(sy)}((t_1, L_1), \dots  , (t_k, L_k)) \,\,  = \,\, 
\tilde{\psi}_{\bm}^{(sy)}((t_1, L_1), \dots  , (t_k, L_k)) \cdot \left( 
\tilde{\psi}_{\bm}^{(sy)}((t_1, L_1), \dots  , (t_k, L_k))\right)^T 
\end{equation}
We note that the RHS is the Cartan conjugate of $I$ by 
$\tilde{\psi}_{\bm}((t_1, L_1), \dots  , (t_k, L_k)) \in SU_n$ and thus is in 
the Cartan model $\cC_n^{(sy)}$.  It can also be
obtained by successively applying to $I$ the Cartan conjugates by the 
$A_{(\pi t_j, L_j)}$, for $j = k, k-1, \dots , 1, 0$, where we let $A_{(\pi t_0, 
L_0)}$ denote $A_{(-\pi \tilde{t}, e_1)}$ (each of these are, strictly 
speaking, Cartan conjugates for $U_n$ but their product is in $SU_n$).  
\par
We observe that each $C\RP^{m-1}$ has an open dense cell 
$$ E_m^{(sy)} = (0, 1) \times \{x = (x_1, \dots, x_m, 0 , \dots 0) : (x_1, 
\dots, x_m) \in S^{m-1} \mbox{ and } x_m > 0 \} $$
which is of dimension $m$.  Also, if $x = (x_1, \dots, x_m, 0 , \dots 0)$ with 
$x_m > 0$, then $x \in_{\min} \C^m$.  \par 
\par
We now introduce some notation and denote $\tilde{S}_{\bm}^{(sy)} = 
(C\RP^{m_1 - 1}) \times (C\RP^{m_2 - 1}) \times \cdots \times (C\RP^{m_k - 
1})$, the cell 
$$ E_{\bm}^{(sy)} \,\, = \,\, E_{m_1}^{(sy)} \times E_{m_2}^{(sy)} \times 
\cdots \times E_{m_k}^{(sy)}\, ,$$
 and $S_{\bm}^{(sy)} = \psi_{\bm}(E_{\bm}^{(sy)})$.  Then, 
$E_{\bm}^{(sy)}$ is an open dense cell in $\tilde{S}_{\bm}^{(sy)}$ with 
$\dim_{\R} E_{\bm}^{(sy)} =  | \bm | \overset{def}{=} \sum_{j = 1}^{k} m_j 
$.  Also, the image  $S_{\bm}^{(sy)} = \psi_{\bm}(E_{\bm}^{(sy)})$ consists 
of elements of $SU_n$ of {\em real Schubert type} $\bm$.  Furthermore, 
$\overline{S_{\bm}^{(sy)}} = \psi_{\bm}^{(sy)}(\tilde{S}_{\bm}^{(sy)})$.  
Then the results of Kadzisa-Mimura \cite[Thm 6.7]{KM} give the following 
Schubert decomposition of $SU_n/SO_n$.
\begin{Thm}
\label{Thm4.5}
The Schubert decomposition of $SU_n/SO_n$ has the following properties:
\begin{itemize}
\item[a)] $SU_n/SO_n$is the disjoint union of the $S_{\bm}^{(sy)}$ as $\bm 
= (m_1, \dots , m_k)$ varies over all increasing sequences with $1 < m_1$, 
$m_k \leq n$, and $0 \leq k \leq n-1$.
\item[b)] The map $\psi_{\bm}^{(sy)} : E_{\bm}^{(sy)} \to S_{\bm}^{(sy)}$ 
is a homeomorphism.
\item[c)] $(\overline{S_{\bm}^{(sy)}}\backslash S_{\bm}^{(sy)}) \subset 
\cup_{\bm^{\prime}} S_{\bm^{\prime}}^{(sy)}$, where the union is over all 
$S_{\bm^{\prime}}^{(sy)}$ with $\dim S_{\bm^{\prime}}^{(sy)} < \dim 
S_{\bm}^{(sy)}$.
\end{itemize}
\end{Thm}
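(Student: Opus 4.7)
The plan is to follow Kadzisa-Mimura by adapting the Whitehead-Miller-Yokota proof of Theorem \ref{Thm3.5} to the Cartan model $\cC_n^{(sy)}$, replacing ordinary products of pseudo-rotations with iterated Cartan conjugacies $B \mapsto A \cdot B \cdot A^T$ by $\R$-pseudo-rotations, which preserve $\cC_n^{(sy)}$.

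For (a), I would first prove that every $B \in \cC_n^{(sy)}$ lies in some $S_{\bm}^{(sy)}$. By Lemma \ref{Lem2.1}, $B$ has a real orthonormal eigenbasis with eigenvalues $e^{i\theta_j}$, so $B = O\cdot D\cdot O^T$ with $O \in O_n$ and $D = \mathrm{diag}(e^{i\theta_j})$. Writing $O$ as a product of real Householder reflections and $D = E \cdot E^T$ for $E = \mathrm{diag}(e^{i\theta_j/2})$, one obtains $B = C\cdot C^T$ for an unordered product $C$ of $\R$-pseudo-rotations. I would then execute a Whitehead-style reordering on $C$, interpreting each elementary move as a Cartan conjugacy on the evolving symmetric matrix. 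The crucial technical point is a symmetric analogue of Lemma \ref{Lem3.1}: when two axes $x, x'$ of $\R$-pseudo-rotations both minimally belong to $\C^m$, the intersection $\langle x, x'\rangle_{\R} \cap \R^{m-1}$ is again a real line, so the lowered factor produced by the swap is itself an $\R$-pseudo-rotation. Iterating puts $B$ into the form $\psi_{\bm}^{(sy)}(u)$ for some $u \in E_{\bm}^{(sy)}$.

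For (b), suppose $B = \psi_{\bm}^{(sy)}(u)$. The factor $\tilde\psi_{\bm}^{(sy)}(u) \in SU_n$ is itself in increasing ordinary Schubert normal form with symbol $\bm$ and data $(\pi t_j, L_{j,\C})$, so Theorem \ref{Thm3.5}(b) determines these data from $\tilde\psi_{\bm}^{(sy)}(u)$. The remaining step is to recover $\tilde\psi_{\bm}^{(sy)}(u)$ from $B = \tilde\psi_{\bm}^{(sy)}(u)\cdot \tilde\psi_{\bm}^{(sy)}(u)^T$; the reality of each $L_j$ together with property iv) of pseudo-rotations forces the recovered Schubert data to arise from real lines and real angles, and restricting $t_j$ to $(0,1)$ removes the $\pi$-periodicity ambiguity. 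Hence $\psi_{\bm}^{(sy)}$ restricted to $E_{\bm}^{(sy)}$ is injective; continuity together with compactness of the closures then gives the homeomorphism onto $S_{\bm}^{(sy)}$ and disjointness of distinct cells, completing (a) and (b).

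For (c), $\overline{S_{\bm}^{(sy)}}$ equals the compact image $\psi_{\bm}^{(sy)}(\tilde S_{\bm}^{(sy)})$; a boundary point arises either when some $t_j = 0$, so the $j$-th factor collapses to the identity and the tuple shortens to $(m_1,\ldots,\widehat{m_j},\ldots,m_k)$, or when some $L_j$ minimally lies in a proper $\C^{m_j'}$ with $m_j' < m_j$, so Whitehead reordering produces a new symbol $\bm'$ with $|\bm'| < |\bm|$. Either way $\dim S_{\bm'}^{(sy)} < \dim S_{\bm}^{(sy)}$. The main obstacle is the uniqueness argument in (b): one must carefully verify that the \emph{real} structure of the $L_j$ is genuinely preserved under the Whitehead reductions and then recovered unambiguously from $B = C\cdot C^T$. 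This is exactly where the Cartan conjugacy formulation is essential, and it is the principal technical content of the Kadzisa-Mimura proof.
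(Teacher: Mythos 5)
The paper does not prove Theorem \ref{Thm4.5} directly; it quotes it from Kadzisa--Mimura \cite[Thm 6.7]{KM}, with the substantive input (existence and uniqueness of the ordered symmetric factorization) supplied by the algorithm described in \S 5. Measured against that, your proposal has a genuine gap at exactly the point you call ``the crucial technical point.'' You posit a symmetric analogue of Lemma \ref{Lem3.1} for $\R$-pseudo-rotations, arguing that since $\langle x, x'\rangle_{\R} \cap \R^{m-1}$ is a real line, the Whitehead swap stays within the class of $\R$-pseudo-rotations. The paper explicitly warns that no such analogue exists, and the reason is visible in the proof of Lemma \ref{Lem3.1}: in case 1) the new axis is $\tilde{x} = A_{(\theta', x')}^{-1}(x) = x - (1-e^{-i\theta'})\langle x, x'\rangle x'$, which is not real for real $x, x'$ because of the complex scalar $(1-e^{-i\theta'})$; and in case 2) the new axis $\tilde{x}'$ spans $e_1 + v$ with $v = (A_{(\theta,x)}A_{(\theta',x')})^{-1}(e_1)$, again not real. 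So your reordering exits the class of $\R$-pseudo-rotations after one step and the induction collapses. A second, related problem (also flagged in the paper) is that an ordered product of $\R$-pseudo-rotations about non-orthogonal real axes need not be symmetric, so even a successful reordering of $C$ would not land you in the normal form $\tilde\psi_{\bm}^{(sy)}(u)\cdot\tilde\psi_{\bm}^{(sy)}(u)^T$ that defines the cells; saying each move is ``interpreted as a Cartan conjugacy'' does not specify an actual induction.

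The uniqueness step in your part (b) is also not sound as written: the fiber of $C \mapsto C\cdot C^T$ over $B$ is a full coset of $SO_n$, so $\tilde\psi_{\bm}^{(sy)}(u)$ cannot be ``recovered'' from $B$, and reality of the $L_j$ does not rescue this. The correct route, which is the one the paper records in \S 5 (Ordered Symmetric Factorizations for $\cC^{(sy)}$), works on $B$ itself: take the ordinary complex Whitehead factorization of $B$ in decreasing order, use $B = B^T$ together with the uniqueness statement of Lemma \ref{Lem3.4} to conclude that the lowest factor $A_1$ is symmetric with real axis, extract its square root $C_1$, Cartan-conjugate $B$ by $C_1^*$ to strip off that factor, and induct. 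This yields $B = C_1\cdots C_k\cdot\gs(C_k^*)\cdots\gs(C_1^*)$ and Corollary \ref{Cor5.6} (the symmetric Schubert symbol equals the ordinary one), which is what simultaneously gives existence, disjointness, and the injectivity of $\psi^{(sy)}_{\bm}$ on $E^{(sy)}_{\bm}$. Your part (c) is fine in outline once (a) and (b) are in place.
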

Hence, the Schubert decomposition by the cells $S_{\bm}^{(sy)}$ is a cell 
decomposition of $SU_n/SO_n$.  We refer to the cells $S_{\bm}^{(sy)}$ as  
the {\em symmetric Schubert cells of $SU_n/SO_n$}.  We also refer to the 
factorization given by \eqref{Eqn4.2} for elements $B$ of $S_{\bm}^{(sy)}$ 
as the {\em ordered symmetric factorization} and the corresponding 
Schubert symbol is denoted by $\bm^{(sy)}(B)$.  \par
\begin{Remark}
Unlike the case of $SU_n$, in general the $\tilde{S}_{\bm}^{(sy)}$ do not 
carry a top dimensional fundamental class.  In the case of a simple Schubert 
symbol $(m_1)$, since $L$ is real, $A_{(\pi, L)}$ is the complexification of a 
real reflection about the real hyperplane $L_{\C}^{\perp}$  and hence it is 
its own inverse and transpose.  This is independent of $L$.  Then, 
\begin{align}
\label{Eqn4.3}
 \psi_{(m_1)}^{(sy)}(\pi, L_1) \,\, &=  \,\,  A_{(-\pi , e_1)}\cdot A_{(\pi , 
L_1)}\cdot A_{(\pi , L_1)}^T \cdot A_{(-\pi , e_1)}^T  \notag \\
\,\, &=  \,\, A_{(-\pi , e_1)}\cdot A_{(\pi , L_1)}\cdot A_{(\pi , L_1)}^{-1} 
\cdot A_{(-\pi , e_1)}^{-1} \,\, = \,\, Id
\end{align}
Thus, $\psi_{(m_1)}^{(sy)}( \{1\} \times \RP^{m_1-1}) = Id$ and so factors 
to give a map $\psi_{(m_1)}^{(sy)} : S\RP^{m_1-1} \to \cC^{(sy)}_n$.  
Hence, for the simple Schubert symbol $(m_1)$, 
$\overline{E_{(m_1)}^{(sy)}} = \psi_{(m_1)}^{(sy)}(S\RP^{m_1-1})$ has a 
fundamental class which is the image of the fundamental class of 
$S\RP^{m_1-1}$.  
\par
For a general symmetric Schubert symbol $\bm = \bm^{(sy)} = (m_1, m_2, 
\dots , m_k)$, if $(SU_n/SO_n)^{(\ell)}$ denotes the $\ell$-skeleton of 
$SU_n/SO_n$, then $\psi ^{(sy)}_{\bm}$ composed with the projection does factor 
through to give a map
$$ \tilde{\psi}_{\bm}^{(sy)\, \prime} : \prod_{i = 1}^{k} S\RP^{m_i-1} \to 
(SU_n/SO_n)/(SU_n/SO_n)^{(|\bm|-1)}\, .  $$
The product again carries a fundamental class and in \S \ref{S:sec5} we see 
how these images in homology correspond to generators.  
\end{Remark}
\vspace{2ex}
\flushpar
\subsection*{Schubert Decomposition for $SU_{2n}/Sp_n$} \hfill
\par
For the Schubert decomposition for $SU_{2n}/Sp_n$ we will largely follow 
\cite[\S 7]{KM}; except that for the geometric properties of Milnor fibers we 
will emphasize the use of the quaternionic structure on $\C^{2n}$.  We 
already have the complex structure giving multiplication by $\bi$.  We extend 
it to $\mbH$ by defining multiplication by $\bj$ by $\bj x = J_n \bar{x}$ for 
$x \in \C^{2n}$ with $\bar{x}$ complex conjugation (so $\bk x = \bi\bj x$).  
Then, it is a standard check (see e.g. 
\cite[\S 1.4.4]{GW}) that this defines a quaternionic action so $\C^{2n} 
\simeq \mbH^n$.  For this quaternionic structure, each subspace $\C^{2m}$ 
spanned by $\{e_1, \dots , e_{2m}\}$ is a quaternionic subspace. \par
 Let $<x , y > = x^T\cdot \bar{y}$ (for column vectors $x$ and $y$) denote 
the Hermitian inner product on $\C^{2n}$.  It has the following directly 
verifiable properties:
\begin{itemize}
\item[i)] multiplication by $J_n$ is $\mbH$-linear;
\item[ii)] $<\bj x , \bj y > = \overline{<x , y >}$;  and
\item[iii)] (by ii)) both $<x , \bj x > = 0$ and $<\bj x , y > = - \overline{<x 
, \bj y >}$.
\end{itemize}
\par
An element $B$ of the Cartan model for $SU_{2n}/Sp_n$ is characterized 
from \eqref{Eqn2.2} by $(BJ_n)^T = -BJ_n$.  so that $BJ_n$ is an element 
of $SU_{2n}$ and is skew-symmetric. 
This has the following consequence, which is basically equivalent to \cite[Thm 
3.4]{KM}. 
\begin{Lemma}
\label{Lem4.6}
If $B \in \cC_{2n}^{(sk)}$, the Cartan model for $SU_{2n}/Sp_n$, then 
\begin{itemize}
\item[a)] $B \bj x = \bj B^*x$; and 
\item[b)] if $B$ satisfies the condition in a), then the eigenspaces of $B$ are 
$\mbH$-subspaces.
\end{itemize}
\end{Lemma}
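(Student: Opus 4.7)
The plan is to reduce both parts to direct algebraic manipulations, using only the defining identity for $\cC_{2n}^{(sk)}$, the formula $\bj x = J_n \bar{x}$, and the standard facts $J_n^T = J_n^* = J_n^{-1} = -J_n$ recalled in \S\ref{S:sec2}.

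For (a), I would first rewrite the condition $B \in \cC_{2n}^{(sk)}$ in the form that will be used. From \eqref{Eqn2.2}, $(BJ_n)^T = -BJ_n$ expands to $J_n^T B^T = -BJ_n$, and since $J_n^T = -J_n$ this gives the key identity
\[
 J_n B^T \,\, = \,\, B J_n.
\]
Then I would compute both sides of (a) in terms of ordinary matrix operations. The left side is $B \bj x = B J_n \bar{x}$. For the right side, since $B^* = \bar{B}^T$, we have $\overline{B^* x} = B^T \bar{x}$, so $\bj B^* x = J_n B^T \bar{x}$. The identity $J_n B^T = B J_n$ derived above makes these equal, proving (a).

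For (b), suppose $Bx = \gl x$ with $x \neq 0$. Since $B \in SU_{2n}$ is unitary, $|\gl| = 1$ and $B^* x = \gl^{-1} x = \bar{\gl} x$. Applying (a) yields
\[
 B(\bj x) \,\, = \,\, \bj B^* x \,\, = \,\, \bj (\bar{\gl} x).
\]
From $\bj x = J_n \bar{x}$ and the $\C$-linearity of $J_n$, one checks that $\bj (\mu x) = \bar{\mu}\, \bj x$ for any $\mu \in \C$, so $\bj (\bar{\gl} x) = \gl \bj x$. Hence $B(\bj x) = \gl (\bj x)$, so the $\gl$-eigenspace is closed under multiplication by $\bj$. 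Since eigenspaces are automatically $\C$-subspaces and $\mbH$ is generated over $\R$ by $1, \bi, \bj, \bk = \bi\bj$, closure under $\bj$ upgrades them to $\mbH$-subspaces.

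I do not anticipate a serious obstacle here: both parts are bookkeeping, and the only subtle point is keeping the antilinearity of $\bj$ straight (i.e.\ $\bj \mu = \bar{\mu} \bj$), which is already built into property ii) for the Hermitian form listed just before the lemma. The step that deserves the most care is the derivation of $J_n B^T = B J_n$ from the skew-symmetry of $B J_n$, since a sign error there would propagate into (a).
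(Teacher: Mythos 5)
Your proposal is correct and follows essentially the same route as the paper: part (a) is the same direct computation (the paper strings the identities $BJ_n = -(BJ_n)^T = J_nB^T$ through a single chain of equalities rather than isolating $J_nB^T = BJ_n$ first), and part (b) is the identical eigenvector calculation using unitarity and the antilinearity of $\bj$. Your closing remark that closure under $\bj$ (together with $\C$-linearity) upgrades the eigenspaces to $\mbH$-subspaces is a point the paper leaves implicit, and it is a worthwhile clarification.
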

\begin{proof}
For a), this is a simple calculation.
$$ B \bj x \, = \, BJ_n\bar{x} \, = \, - (BJ_n)^T\bar{x} \, = \, -J_n^T B^T 
\bar{x} \, = \, J_n \overline{\bar{B}^T x} \, = \, J_n \overline{B^* x} \, = \, 
\bj B^*x\, .$$
\par
For b), we observe that if $ B x = \gl x$, then as $B \in SU_{2n}$, $B^* = 
B^{-1}$ and $|\gl | = 1$ so 
$$ B \bj x \, = \, \bj B^*x \, = \, \bj B^{-1}x \, = \, \bj \gl^{-1}x \, = \, J_n 
\overline{\gl^{-1}x} \, = \, \gl J_n \bar{x} \, = \, \gl \bj x\, .$$
Thus, the $\gl$-eigenspace of $B$ is invariant under multiplication by $\bj$. 
\end{proof}
\par
We will refer to a $B \in U_{2n}$ which satisfies the condition in a) of Lemma 
\ref{Lem4.6} as being $\mbH$*-linear.  To factor such a matrix, we use a 
version of pseudo-rotation for $\mbH^n$.  Given a quaternionic line $L 
\subset \C^{2n}$, let $L^{\perp}$ be the quaternionic hyperplane orthogonal 
to $L$.  We define an $\mbH$-pseudo-rotation by an angle $\theta$, 
$\tilde{A}_{(\theta, L)}$ which is the identity on $L^{\perp}$ and is 
multiplication by $e^{\iti \theta}$ on $L$.  It is $\C$-linear and can be 
checked to be $\mbH$*-linear.  If $x \in L$ is a unit vector, then by property 
iii), $\{ x, \bj x\}$ is an orthonormal basis for $L$.  Then, 
$\tilde{A}_{(\theta, L)}$ can be written as a product of 
pseudo-rotations $A_{(\theta, x)}A_{(\theta, \bj x)}$, which commute.  By 
the properties of pseudo-rotations, we have the following properties of 
$\mbH$-pseudo-rotations.
\begin{itemize}
\item[i)]  $\tilde{A}_{(\theta, L)}^* =  \tilde{A}_{(\theta, L)}^{-1} = 
\tilde{A}_{(-\theta, L)}$;
\item[ii)]   $\overline{\tilde{A}_{(\theta, L)}} = \tilde{A}_{(-\theta, 
\bar{L})}$, where $\bar{L}$ is the $\mbH$-line generated by $\bar{x}$; and
\item[iii)] $\tilde{A}_{(\theta, L)}^T = \tilde{A}_{(\theta, \bar{L})}$;
\item[iv)] $\det(\tilde{A}_{(\theta, L)}) = e^{2 \iti \theta}$;
\item[v)]  If $L \perp L^{\prime}$ then $\tilde{A}_{(\theta, L)}$ and 
$\tilde{A}_{(\theta, L^{\prime})}$ commute; 
\item[vi)]  $\tilde{A}_{(\theta, L)}$ is $\mbH$*-linear. 
\end{itemize}
\begin{proof}  All of i) - v) follow directly from the properties of 
pseudo-rotations.  For vi) we observe that $\tilde{A}_{(\theta, L)}$ is 
characterized as a unitary matrix which has $L$ for the eigenspace for 
$e^{\iti \theta}$ and $L^{\perp}$ as the eigenspace for the eigenvalue $1$.  
Thus, for vi), as both $L$ and $L^{\perp}$ are $\mbH$-subspaces we see 
$\tilde{A}_{(\theta, L)} \equiv Id$ on $L^{\perp}$ and for $x \in L$, 
$$ \tilde{A}_{(\theta, L)}( \bj x) \,\, = \,\, e^{\iti \theta} \bj x\,\, = \,\, \bj 
e^{-\iti \theta} x \,\, = \,\, \bj \tilde{A}_{(\theta, L)}^{-1}(x)\, .$$ 
As $\tilde{A}_{(\theta, L)}^* = \tilde{A}_{(\theta, L)}^{-1}$, we see that 
$\tilde{A}_{(\theta, L)}( \bj x) = \bj \tilde{A}_{(\theta, L)}^*(x)$ on each 
summand $L$ and $L^{\perp}$; hence they are equal.  
\end{proof}
In addition, we can give a unique representation of $\tilde{A}_{(\theta, L)}$ 
as an ordered product of pseudo-rotations.
\begin{Lemma}
\label{Lem4.11}
Given an $\mbH$-line $L \subset_{\min} \C^{2m}$, there is a unique unit 
vector $x \in L \cap \C^{2m-1}$ of the form $x = (x_1, \dots , x_{2m-1}, 
0)$ with $x_{2m-1} > 0$ so that $\bj x = (\bar{x_2}, -\bar{x_1}, \bar{x_4}, 
-\bar{x_3}, \dots, 0, -x_{2m-1})$.  Hence, $\tilde{A}_{(\theta, L)}$ can be 
uniquely written 
$A_{(\theta, x)}\cdot A_{(\theta, \bj x)}$.
\end{Lemma}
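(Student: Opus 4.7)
The plan is to identify $L \cap \C^{2m-1}$ as a complex line via a dimension count, show that no nonzero vector there is supported in $\C^{2m-2}$, and then pin down a canonical unit vector by a phase normalization.

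Since $L$ is a complex $2$-plane with $L \subset \C^{2m}$ but $L \not\subset \C^{2m-1}$, we have $L + \C^{2m-1} = \C^{2m}$, so a dimension count gives $\dim_{\C}(L \cap \C^{2m-1}) = 1$. Any nonzero $y$ in this intersection then has the form $(y_1, \dots, y_{2m-1}, 0)$. Next, I would rule out $y_{2m-1} = 0$: if it were zero then $y \in \C^{2m-2}$, and the block form of $J_n$ gives $(\bj y)_{2m-1} = \overline{y_{2m}} = 0$ and $(\bj y)_{2m} = -\overline{y_{2m-1}} = 0$, so $\bj y \in \C^{2m-2}$ as well. Since $L$ is $\bj$-invariant and $\C$-spanned by $y$ and $\bj y$ (which are $\C$-linearly independent because $\bj^2 = -1$ rules out $\bj y = c y$ for $c \in \C$), this would force $L \subset \C^{2m-2}$, contradicting $L \subset_{\min} \C^{2m}$.

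With $y_{2m-1} \neq 0$, I would normalize by setting $x = \lambda y / \|y\|$, where $\lambda = |y_{2m-1}|/y_{2m-1}$ is the unique unit complex number making $x_{2m-1}$ real and positive. Uniqueness of $x$ then follows because any two unit vectors on a complex line differ by a unit phase, which is pinned down by the requirement $x_{2m-1} > 0$. Once $x$ is fixed, $\bj x = J_n \overline{x}$ is determined, and direct computation using $\overline{x_{2m-1}} = x_{2m-1}$ yields the displayed formula for $\bj x$. The factorization $\tilde{A}_{(\theta, L)} = A_{(\theta, x)} \cdot A_{(\theta, \bj x)}$ was already observed just before the lemma, and its uniqueness as an ordered product of pseudo-rotations is immediate from the uniqueness of $x$. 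The main (minor) obstacle is the $\bj$-invariance step ruling out $y_{2m-1} = 0$; the remaining steps are routine bookkeeping.
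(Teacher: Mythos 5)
Your proposal is correct and follows essentially the same route as the paper: a dimension count identifying $L \cap \C^{2m-1}$ as a complex line, use of the quaternionic structure to rule out the degenerate case (you invoke the $\bj$-invariance of $L$ directly, where the paper phrases the same idea via the $\mbH$-linear projection $\C^{2m} \to \C^{2m}/\C^{2m-2}$ and the even complex dimension of $\mbH$-subspaces), and then the phase normalization $x_{2m-1}>0$ to pin down $x$ uniquely. No gaps.
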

\begin{proof}
As $\dim_{\C} L = 2$. $\dim_{\C} (L \cap \C^{2m-1}) = 1$.  It is $\geq 1$, 
and otherwise it would be $2$, i.e. $L \subset \C^{2m-1}$.  Then, under the 
$\mbH$-linear projection $p : \C^{2m} \to \C^{2m}/\C^{2m-2}$ the image 
of $L$, which is an $\mbH$-subspace would have $\C$-dimension $1$, a 
contradiction.  \par
As $\dim_{\C} (L \cap \C^{2m-1}) = 1$, and $L \not \subset \C^{2m-2}$, 
we may find a unit vector $x \in L$ of the form $x^{\prime} = 
(x_1^{\prime}, \dots , x^{\prime}_{2m-1}, 0)$ with $x^{\prime}_{2m-1} \neq 
0$.  Multiplying $x^{\prime}$ by an appropriate unit complex number we 
obtain $x$ with $x_{2m-1} > 0$.  Then, $\bj x$ is as stated and so is 
$\tilde{A}_{(\theta, L)}$.  
\end{proof}
\subsection*{Whitehead-Type Ordered Factorization} \hfill 
\par
For an $\mbH$*-linear $B \in U_{2n}$, we may initially factor it as a product 
of $\mbH$-pseudo-rotations in a manner similar to the symmetric case as 
follows.  Each eigenspace $V_{\gl}$ of $B$ with $\gl = e^{\iti \theta} \neq 
1$ is an $\mbH$-subspace.  We choose the smallest $m^{\prime}_1$ so that 
$V_{\gl} \cap \C^{2m^{\prime}_1} \neq 0$, and hence is an $\mbH$-line 
$L_1^{(\gl)}$.  We successively repeat this for $(L_1^{(\gl)})^{\perp} \cap 
V_{\gl}$ and obtain an orthogonal decomposition $V_{\gl} = L_1^{(\gl)} 
\oplus L_2^{(\gl)} \cdots L_{k^{\prime}}^{(\gl)}$ with $L_j^{(\gl)} 
\subset_{\min} \C^{2m_j^{\prime}}$ and $m_1^{\prime} < m_2^{\prime} < 
\cdots  < m_{k^{\prime}}^{\prime}$.  Each $L_j^{(\gl)}$ gives an 
$\mbH$-pseudo-rotation $\tilde{A}_{(\theta, L_j^{(\gl)})}$.   We may do this 
for each eigenvalue $\gl \neq 1$.  Because different $L_j$ are orthogonal, 
the corresponding $\mbH$-pseudo-rotations commute.  Thus, we may factor 
$B$ as a product of $\mbH$-pseudo-rotations

\begin{equation}
\label{Eqn4.6a}
 B \,\, = \,\, \tilde{A}_{(\theta_1, L_1)} \cdot \tilde{A}_{(\theta_2, L_2)} 
\cdots \tilde{A}_{(\theta_k, L_k)}  
\end{equation}
where $L_j \subset_{min} \C^{2m_j}$, $1 \leq m_1 \leq m_2 \leq \cdots 
\leq m_k$, and several $\theta_j$ may be equal.  However, this is not an 
ordered factorization as some of the $m_j$ may be equal. \par
We would like to apply an analogue of the Whitehead Lemma \ref{Lem3.1} to 
products of $\mbH$-pseudo-rotations.  However, it is not possible to do so 
remaining in the category of $\mbH$-pseudo-rotations.  For example, if $B 
\in U_{2n}$ then $B\cdot \tilde{A}_{(\theta, L)}\cdot B^{-1}$ is a unitary 
transformation with $B( L)$ as the eigenspace for $e^{\iti \theta}$ and 
$B(L^{\perp}) = (B(L))^{\perp}$ as the eigenspace for the eigenvalue $1$.  
While $B( L)$ is a $2$-dimensional complex space, it need not be an 
$\mbH$-subspace.  \par
However, there is an alternate way to proceed which uses Lemma 
\ref{Lem4.11}.  We may uniquely decompose each $\mbH$-pseudo-rotation 
in \eqref{Eqn4.6a} into a product of pseudo-rotations about orthogonal 
planes which thus all commute so that \eqref{Eqn4.6a} may be rewritten
\begin{equation}
\label{Eqn4.6b}
 B \,\, = \,\, A_{(\theta_1, x_1)} \cdot A_{(\theta_2, x_2)} \cdots 
A_{(\theta_k, x_k)} \cdot A_{(\theta_k, \bj x_k)} \cdots A_{(\theta_2, \bj 
x_2)} \cdot A_{(\theta_1, \bj x_1)}
\end{equation}
\par
Then, we can progressively apply Whitehead\rq s Lemma to the factors 
$A_{(\theta_j, x_j)}$ beginning with the highest $j$ and proceeding left  to 
the lowest to obtain an ordered factorization for the product involving the 
$A_{(\theta_j, x_j)}$.  Then for each application of Whitehead\rq s Lemma 
for these, there is a corresponding application of it for the $A_{(\theta_j, 
\bj x_j)}$ from the left proceeding to the right using the following lemma. 
\par
\begin{Lemma}
\label{Lem4.7}
Given a relation between pseudo-rotations  
\begin{equation}
\label{Eqn4.7a}
 A_{(\theta, x)} \cdot A_{(\theta^{\prime}, x^{\prime})}\,\, = \,\, 
A_{(\theta_1, x_1)} \cdot 
A_{(\theta_2, x_2)}  \, ,
\end{equation}
 there is a corresponding relation 
\begin{equation}
\label{Eqn4.7b}
A_{(\theta^{\prime}, \bj x^{\prime})} \cdot A_{(\theta, \bj x)}\,\, = \,\, 
 A_{(\theta_2, \bj x_2)}  \cdot A_{(\theta_1, \bj x_1)}\, .
\end{equation}
\end{Lemma}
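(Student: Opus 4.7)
The plan is to exhibit an automorphism of $U_{2n}$ that sends a pseudo-rotation $A_{(\theta,x)}$ to $A_{(-\theta,\bj x)}$, apply this automorphism to both sides of \eqref{Eqn4.7a}, and then take inverses. The key algebraic fact driving the argument is already encoded in the definition $\bj x = J_n \bar{x}$ and in properties iii)--iv) of pseudo-rotations.

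First I would define $\tau : U_{2n} \to U_{2n}$ by $\tau(B) = J_n \bar{B} J_n^{-1}$. Because complex conjugation is multiplicative and $J_n$ is a fixed real matrix, $\tau$ is a group automorphism: $\tau(BC) = J_n \overline{BC} J_n^{-1} = (J_n \bar{B} J_n^{-1})(J_n \bar{C} J_n^{-1}) = \tau(B)\tau(C)$. I would then compute its effect on a pseudo-rotation. By property iv), $\overline{A_{(\theta,x)}} = A_{(-\theta,\bar{x})}$, and by property iii) conjugation by $J_n$ yields $J_n A_{(-\theta,\bar{x})} J_n^{-1} = A_{(-\theta,J_n \bar{x})} = A_{(-\theta,\bj x)}$. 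Hence $\tau(A_{(\theta,x)}) = A_{(-\theta,\bj x)}$.

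Next I would apply $\tau$ termwise to \eqref{Eqn4.7a}, obtaining
\begin{equation*}
A_{(-\theta,\bj x)} \cdot A_{(-\theta^{\prime},\bj x^{\prime})} \,=\, A_{(-\theta_1,\bj x_1)} \cdot A_{(-\theta_2,\bj x_2)}.
\end{equation*}
Taking inverses of both sides and using property iv) in the form $A_{(\theta,y)}^{-1} = A_{(-\theta,y)}$, together with $(UV)^{-1} = V^{-1}U^{-1}$, reverses the order of the factors on each side and flips the sign of every angle back to positive, giving
\begin{equation*}
A_{(\theta^{\prime},\bj x^{\prime})} \cdot A_{(\theta,\bj x)} \,=\, A_{(\theta_2,\bj x_2)} \cdot A_{(\theta_1,\bj x_1)},
\end{equation*}
which is \eqref{Eqn4.7b}.

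The only substantive step is the identity $\tau(A_{(\theta,x)}) = A_{(-\theta,\bj x)}$; once it is in hand the lemma is formal. The main thing to be careful about is the antilinearity of complex conjugation and the real reality of $J_n$ (so that $\bar{J_n} = J_n$ and thus $J_n$ passes through conjugation cleanly). Everything else is bookkeeping with signs of angles and reversal of factor order.
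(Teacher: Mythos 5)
Your proposal is correct and is essentially the paper's argument in different packaging: since $B^T = \bar{B}^{-1}$ on $U_{2n}$, your composite "apply $\tau(B)=J_n\bar{B}J_n^{-1}$, then invert" is literally the same order-reversing map $B\mapsto J_nB^TJ_n^{-1}$ that the paper applies to both sides of \eqref{Eqn4.7a}, and the key identity $\tau(A_{(\theta,x)})=A_{(-\theta,\bj x)}$ is equivalent to the paper's $J_nA_{(\theta,x)}^TJ_n^{-1}=A_{(\theta,\bj x)}$ via property iv) of pseudo-rotations.
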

\begin{proof}
First, apply the transpose to each side of \eqref{Eqn4.7a} and then 
conjugate with $J_n$ to obtain 
\begin{equation}
\label{Eqn4.7c}
(J_n \cdot A_{(\theta^{\prime}, x^{\prime})}^T \cdot J_n^{-1}) \cdot (J_n 
\cdot  A_{(\theta, x)}^T \cdot J_n^{-1})\,\, = \,\, 
(J_n \cdot A_{(\theta_2, x_2)}^T \cdot J_n^{-1}) \cdot (J_n \cdot 
A_{(\theta_1, x_1)}^T \cdot J_n^{-1})
\end{equation}
Then, for any pseudo-rotation $A_{(\theta, x)}$, 
\begin{equation}
\label{Eqn4.7d}
J_n\cdot A_{(\theta, x)}^T\cdot J_n^{-1} = J_n\cdot A_{(\theta, 
\bar{x})}\cdot J_n^{-1} = A_{(\theta, J_n \bar{x})} = A_{(\theta, \bj x)}\, .
\end{equation}
Thus, applying \eqref{Eqn4.7d} to each product in \eqref{Eqn4.7c} yields
\eqref{Eqn4.7b}.
\end{proof}
\par
Then, by applying Whitehead\rq s Lemma successively to appropriate 
adjacent pairs $A_{(\theta_j, x_j)} \cdot A_{(\theta_{j^{\prime}}, 
x_{j^{\prime}})}$ and Lemma \ref{Lem4.7} to the corresponding pairs 
$A_{(\theta_{j^{\prime}}, \bj x_{j^{\prime}})} \cdot A_{(\theta_j, \bj x_j)}$
we may rewrite 
\begin{equation}
\label{Eqn4.8a}
 B \,\, = \,\, A_{(\theta_1^{\prime}, x_1^{\prime})} \cdot 
A_{(\theta_2^{\prime}, x_2^{\prime})} \cdots A_{(\theta_k^{\prime}, 
x_k^{\prime})} \cdot A_{(\theta_k^{\prime}, \bj x_k^{\prime})} \cdots 
A_{(\theta_2^{\prime}, \bj x_2^{\prime})} \cdot A_{(\theta_1^{\prime}, 
\bj x_1^{\prime})}
\end{equation}
with the $A_{(\theta_j^{\prime}, x_j^{\prime})}$ in increasing order and the 
$A_{(\theta_j^{\prime}, \bj x_j^{\prime})}$ in decreasing order.  
\par
\subsection*{Kadzisa-Mimura Ordered Skew-Symmetric Factorization} \hfill
\par
In fact, this is the skew-symmetric factorization of $B \in \cC^{(sk)}_m$ 
given by Kadzisa-Mimura.  We further rewrite \eqref{Eqn4.8a} using the 
properties of pseudo-rotations 
$\gs(A_i^{-1}) = A_{(\theta_i, \bj x_i)}$.  Hence, $B$ in \eqref{Eqn4.8a} can 
be rewritten either as
\begin{equation}
\label{Eqn4.12a}
 B \,\, = \,\,  \left(A_{(\theta_1, x_1)} \cdot A_{(\theta_2, x_2)} \cdots 
A_{(\theta_k, x_k)} \cdot J_n \cdot A_{(\theta_k, x_k)}^T \cdots 
A_{(\theta_1,  x_1)}^T \right) \cdot J_n^{-1}  
\end{equation}
or alternatively for each $A_j = A_{(\theta_j, x_j)}$ as 
\begin{equation}
\label{Eqn4.12b}
 B \,\, = \,\, A_1 \cdot A_2 \cdots A_k \cdot \gs(A_k^{-1}) \cdots 
\gs(A_1^{-1})\, .
\end{equation}
which is a Cartan conjugate of $I$ and hence belongs to $F^{(sk)\, c}_m$.  
\par
What we have not yet considered is the skew-symmetric Schubert symbol 
associated to this factorization.  We shall do so in giving in the next section 
the Kadzisa-Mimura algorithm for obtaining the ordered 
skew-symmetric factorization from the full Whitehead ordered 
factorization.\par 
We next define the maps for the cell decomposition of $SU_{2n}/Sp_n$ via 
the Cartan Model $\cC_{2n}^{(sk)}$.  
In describing the Schubert decomposition for $SU_{2n}/Sp_n$, we are giving 
a version that modifies that contained in \cite{KM} to associate to the 
Borel-Moore fundamental classes of products of suspensions of quaternionic 
projective spaces the Borel-Moore fundamental classes of the \lq\lq 
Schubert cycles\rq\rq\, obtained as the closures of the Schubert cells.  
However, unlike the general and symmetric cases, we cannot directly do this 
by expressing the closures of Schubert cells as the images of the products 
of suspensions of quaternionic projective spaces.  Instead we proceed through 
intermediate spaces which are products of suspensions of complex 
projective spaces.  \par
For any $m > 0$, we define via the quaternionic structure on $\C^{2m} \simeq \mbH^m$ a map $\chi_m : \C P^{2m - 2} \to \mbH P^{m - 1}$ by $\chi_m(L) = L + \bj 
L$ for complex lines $L \subset \C^{2m-1}$.  For a quaternionic line $Q 
\subset_{\min} \mbH^m$, $Q$ has a unique element $x = (x_1, \dots, 
x_{4(m-1)}, x_{4m-3}, 0) \in S^{4m-3} \subset \C^{2m-1}$ with 
$x_{4m-3} > 0$.  Then, 
$$\bj \, x = (\bar{x}_2, -\bar{x}_1, \bar{x}_4, -\bar{x}_3, \dots, 
\bar{x}_{4(m-1)}, -\bar{x}_{4m-5}, 0, -x_{4m-3})\, .$$  
Hence, the set of such $Q$ are parametrized by the cell $E^{4m-4}$ in 
$S^{4m-3}$ with $x_{4m-3} > 0$ (since $x_{4m-3} = \sqrt{1 - \sum_{j = 
1}^{4(m-1)} |x_j|^2}$ \,).  However, this cell also parametrizes the open 
dense subset of $L \in \C P^{2m - 2}$ with $L \subset_{\min} \C^{2m-1}$. 
The map $\chi_m$ acts as the identity on these parametrized cells of 
dimension $4m-4$, and the complements have lower dimensions.  We may 
then take the suspension $S\chi_m: S\C P^{2m - 2} \to S\mbH P^{m - 1}$, 
which now is a homeomorphism on the cell $(0, 1) \times E^{4m-4}$ of 
dimension $4m-3$.  Thus, $S\chi_{m\, *}$ sends the Borel-Moore 
fundamental class of $S\C P^{2m - 2}$ to that of $S\mbH P^{m - 1}$.  
\par
Then, given an increasing sequence $1 < m_1 < m_2 < \dots < m_k \leq n$, 
which we denote by $\bm^{(sk)} = (m_1, m_2, \dots , m_k)$, we may form 
the product map 
$$\tilde{\chi}_{\bm}^{(sk)} \,\, = \,\, S\chi_{m_1} \times S\chi_{m_2}  
\times \cdots \times  S\chi_{m_k}\, .$$
which again sends the Borel-Moore fundamental class of the product $S\C 
P^{2m_1 - 2} \times \cdots \times S\C P^{2m_k - 2}$ to that of $S\mbH 
P^{m_1 - 1} \times \cdots \times S\mbH P^{m_k - 1}$.  \par 
Then, the correspondence we give between the fundamental homology 
classes of $S\mbH P^{m_1 - 1} \times \cdots \times S\mbH P^{m_k - 1}$ 
and the Schubert cycles will be via the fundamental homology classes of 
$S\C P^{2m_1 - 2} \times \cdots \times S\C P^{2m_k - 2}$.  
\par
We do so by defining a map 
$$ \psi_{\bm}^{(sk)} : S \C P^{2m_1 - 2} \times S \C P^{2m_2 - 2} \times 
\cdots \times S \C P^{2m_k - 2} \longrightarrow \cC_m^{(sk)} . $$  
This is given as follows: \par  
$$\tilde{\psi}_m^{(sk)} : (I \times \C P^{2m_1-2}) \times (I \times \C 
P^{2m_2-2}) \times \cdots \times (I \times \C P^{2m_k-2}) \longrightarrow 
SU_n$$ 
is defined by 
\begin{align}
\label{Eqn4.13}
\tilde{\psi}_{\bm}^{(sk)}((t_1, L_1), \dots  , (t_k, L_k)) \,\,  &= \,\, A_{(-
2\pi \tilde{t}, e_1)}\cdot A_{(2\pi t_1, L_1)} \cdot  A_{(2\pi t_2, L_2)} 
\cdots A_{(2\pi t_k, L_k)}  \notag \\ 
 \,\,  &\cdot A_{(2\pi t_k,\, \bj L_k)} \cdots A_{(2\pi t_2,\, \bj L_2)} \cdot 
A_{(2\pi t_1,\, \bj L_1)} \cdot A_{(-2\pi \tilde{t}, - e_3)}\, , 
\end{align}
where $\tilde{t} = \sum_{j = 1}^{k}t_j$.  We note that the product is of the 
form \eqref{Eqn4.8a} and hence \eqref{Eqn4.12b}.  Also, the first and last 
factors $A_{(-2\pi \tilde{t}, e_1)}$ and  $A_{(-2\pi \tilde{t}, -e_3)}$ ensure 
the product is in $SU_n$ as in the splitting for \eqref{CD2.1}.  \par
Since $A_{(0, L)} = A_{(2\pi, L)} = I_n$ independent of a complex line $L 
\subset \C^{2m-1}$, \eqref{Eqn4.13} descends to a map 
$$\psi_m^{(sk)} : S\C P^{2m_1-2} \times S \C P^{2m_2-2} \times \cdots \times 
S \C P^{2m_k-2} \longrightarrow \cC_m^{(sk)}\, .$$
\par
As remarked above, each $S \C P^{2m_j-2}$ has an open dense cell of 
dimension $4m_j-3$ which we denote by
\begin{align*}
 E_{m_j}^{(sk)}\,\, &= \,\, (0, 1) \times \{x = (x_1, \dots, x_{4(m_j-1)}, 
x_{4m_j-3}, 0 , \dots 0)  \\ 
&\qquad \qquad : (x_1, \dots, x_{4(m_j-1)}, x_{4m_j-3}), 0) \in S^{4m_j-3} 
\mbox{ and } x_{4m_j-3} > 0 \} 
\end{align*}
 and we conclude $\mbH<x> \, \subset_{\min} \C^{2m_j}$.  \par 
We now introduce some notation and denote 
$$\tilde{S}_{\bm}^{(sk)} = S \C P^{2m_1 - 2} \times S \C P^{2m_2 - 2} 
\times \cdots \times S \C P^{2m_k - 2}\, .$$
Also, we consider the corresponding cell $E_{\bm}^{(sk)} = E_{m_1}^{(sk)} 
\times E_{m_2}^{(sk)} \times \cdots \times E_{m_k}^{(sk)}$, and the image 
$S_{\bm}^{(sk)} = \psi_{\bm}^{(sk)}(E_{\bm}^{(sk)})$ in $\cC_{2n}^{(sk)}$.  
Then, $E_{\bm}^{(sk)}$ is an open dense cell in $\tilde{S}_{\bm}^{(sk)}$ with 
$\dim_{\R} E_{\bm}^{(sk)} = \sum_{j = 1}^{k} (4m_j - 3) =  4 | \bm^{(sk)} | 
- 3k = 4 | \bm^{(sk)} | - 3\ell(\bm^{(sk)})$ for $| \bm^{(sk)} | =\sum_{j = 
1}^{k} m_j$ (and $\ell(\bm^{(sk)}) = k$).  Also, the image $S_{\bm}^{(sk)} = 
\psi_{\bm}^{(sk)}(E_{\bm}^{(sk)})$ consists of elements of $\cC_{2n}^{(sk)}$ 
of skew Schubert type $\bm$.  Furthermore, $\overline{S_{\bm}^{(sk)}} = 
\psi_{\bm}^{(sk)}(\tilde{S}_{\bm}^{(sk)})$.  Then the results of 
Kadzisa-Mimura \cite[Thm 8.7]{KM} give the following Schubert 
decomposition of $SU_{2n}/Sp_n$.
\begin{Thm}
\label{Thm4.13}
The Schubert decomposition of $SU_{2n}/Sp_n$ has the following properties 
via the diffeomorphism $SU_{2n}/Sp_n \simeq \cC_{2n}^{(sk)}$:
\begin{itemize}
\item[a)] $SU_{2n}/Sp_n$ is the disjoint union of the $S_{\bm}^{(sk)}$ as 
$\bm = \bm^{(sk)} = (m_1, \dots , m_k)$ varies over all increasing 
sequences with $1 < m_1 < \cdots < m_k \leq n$, and $0 \leq k \leq n-1$.
\item[b)] The map $\psi_{\bm}^{(sk)} : E_{\bm}^{(sk)} \to S_{\bm}^{(sk)}$ 
is a homeomorphism.
\item[c)] $(\overline{S_{\bm}^{(sk)}}\backslash S_{\bm}^{(sk)}) \subset 
\cup_{\bm^{\prime}} S_{\bm^{\prime}}^{(sk)}$, where the union is over all 
$S_{\bm^{\prime}}^{(sk)}$ with $\dim S_{\bm^{\prime}}^{(sk)} < \dim 
S_{\bm}^{(sk)}$.
\end{itemize}
\end{Thm}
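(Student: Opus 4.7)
The plan is to parallel the proof structure of Theorem \ref{Thm3.5} for $SU_n$, carrying along the additional $\mbH$-structure imposed by membership in $\cC_{2n}^{(sk)}$. The heavy lifting has in fact already been done in this section: Lemma \ref{Lem4.6} identifies $\cC_{2n}^{(sk)}$ with the $\mbH$*-linear elements of $SU_{2n}$ whose eigenspaces are $\mbH$-subspaces, Lemma \ref{Lem4.11} pins down canonical complex vectors representing $\mbH$-lines, and Lemma \ref{Lem4.7} ensures that the Whitehead reorderings respect the pairing $x \leftrightarrow \bj x$. The theorem should therefore reduce to Theorem \ref{Thm3.5} applied to the paired factorization \eqref{Eqn4.8a}.

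For existence and the cell parametrization in (a) and (b), I would start with an arbitrary $B \in \cC_{2n}^{(sk)}$ and apply the $\mbH$-spectral decomposition from Lemma \ref{Lem4.6}(b) to obtain the $\mbH$-pseudo-rotation product \eqref{Eqn4.6a}. Using Lemma \ref{Lem4.11}, I would rewrite each $\tilde{A}_{(\theta_j, L_j)}$ as $A_{(\theta_j, x_j)} \cdot A_{(\theta_j, \bj x_j)}$ with a canonical complex unit vector $x_j$, yielding the paired form \eqref{Eqn4.6b}. Then I would run the Whitehead algorithm on the left half and, simultaneously, the mirrored algorithm on the right half controlled by Lemma \ref{Lem4.7}, ending in the strictly ordered paired form \eqref{Eqn4.8a} and hence the Cartan-conjugate form \eqref{Eqn4.12b}. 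Finally, I would identify this with the image of $\psi_{\bm}^{(sk)}$ by reading off parameters: each angle $\theta_j^{\prime}$ gives a unique $t_j \in (0,1)$ via $\theta_j^{\prime} = 2\pi t_j$ (adjusted by the leading $A_{(-2\pi \tilde{t}, e_1)}$ and trailing $A_{(-2\pi\tilde t, -e_3)}$ factors ensuring $SU_n$-membership), and each $x_j^{\prime}$ gives a complex line $L_j \subset_{\min} \C^{2m_j - 1}$, i.e. a point in $E^{(sk)}_{m_j}$. Continuity of $\psi_{\bm}^{(sk)}$ is clear from its explicit formula \eqref{Eqn4.13}, and properness on the open cell gives (b) once injectivity is established.

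For disjointness in (a) and injectivity in (b), the crux is uniqueness of the Schubert symbol $\bm^{(sk)}$. I would argue this in two stages. First, within the form \eqref{Eqn4.8a} I would establish uniqueness by the analogue of Lemma \ref{Lem3.4}: if two paired Whitehead factorizations of the same $B$ are given, conjugating successive factors through each other shows that each $A_{(\theta_i^{\prime}, x_i^{\prime})}$ is determined by $B$ together with the lower-indexed factors, and the ``minimally belongs to $\C^{k}$'' condition forces the Schubert symbol to agree. Second, to eliminate the dependence on the initial $\mbH$-pseudo-rotation decomposition, I would observe that the full paired factorization \eqref{Eqn4.8a} is itself a Whitehead factorization of the element $B \in SU_{2n}$ in the sense of \S\ref{S:sec3} (after sorting the $2k$ factors into increasing order via Lemma \ref{Lem3.1}), and so Theorem \ref{Thm3.5} for $SU_{2n}$ forces the multiset of Schubert indices, and hence the ordered pair $(m_1, \ldots, m_k)$ together with its mirror, to be an invariant of $B$.

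For the closure relation (c), I would analyze the boundary of $E_{\bm}^{(sk)}$ in $\tilde S_{\bm}^{(sk)}$. Two types of degeneration occur: either $t_j \to 0$ or $t_j \to 1$, in which case $A_{(2\pi t_j, L_j)} \cdot A_{(2\pi t_j, \bj L_j)} \to I$ and the paired factor drops out, lowering $\ell(\bm)$; or $L_j$ approaches the subspace $\C^{2m_j - 2} \subset \C^{2m_j - 1}$, which means $L_j \subset_{\min} \C^{\ell}$ for some $\ell < 2m_j - 1$, so that the corresponding $\mbH$-line now minimally lies in some $\C^{2m_j^{\prime}}$ with $m_j^{\prime} < m_j$ and the ordered factorization must be re-run (using Lemma \ref{Lem3.1} and Lemma \ref{Lem4.7} in pairs) to produce a new strictly increasing Schubert symbol $\bm^{\prime}$ with $|\bm^{\prime}| < |\bm|$. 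In either case $\dim_{\R} S^{(sk)}_{\bm^{\prime}} = 4|\bm^{\prime}| - 3\ell(\bm^{\prime}) < 4|\bm| - 3\ell(\bm) = \dim_{\R} S^{(sk)}_{\bm}$, giving (c).

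The main obstacle will be the uniqueness step, because unlike the $SU_n$ case the factorization \eqref{Eqn4.8a} is constrained by the pairing $x_j \leftrightarrow \bj x_j$, and I must verify that the Whitehead reorderings done on the left half are automatically compatible with those done on the right half without breaking that pairing — this is precisely what Lemma \ref{Lem4.7} delivers, but assembling a clean inductive argument in the style of Lemma \ref{Lem3.4} while keeping both halves synchronized is the genuinely delicate point. Once uniqueness holds, the rest of the theorem follows by bookkeeping.
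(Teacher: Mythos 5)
You should note at the outset that the paper does not actually prove this theorem: immediately before the statement it says ``the results of Kadzisa-Mimura \cite[Thm 8.7]{KM} give the following Schubert decomposition,'' so \S 4 only constructs the maps $\psi_{\bm}^{(sk)}$ and the cells $E_{\bm}^{(sk)}$ and then quotes the result. You are therefore supplying an argument where the paper supplies a citation. Your overall strategy is the right reconstruction, and your key reduction --- uniqueness of the skew-symmetric symbol via uniqueness of the ordinary Whitehead factorization of $B$ as an element of $SU_{2n}$ --- is precisely the mechanism the paper itself exploits later, in Lemma \ref{Lem5.7} and Corollary \ref{Cor5.10}. Note though that the paper's \S 5 algorithm runs in the opposite direction from yours: it starts from the already-unique decreasing Whitehead factorization of $B \in SU_{2n}$ and peels off paired factors by Cartan conjugation, which inherits uniqueness for free; your forward direction (spectral decomposition $\to$ $\mbH$-pseudo-rotations $\to$ paired reordering) has to re-establish it.

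That is where the genuine gap sits. Your existence step asserts that running Whitehead's algorithm on the left half of \eqref{Eqn4.6b}, mirrored on the right half by Lemma \ref{Lem4.7}, terminates in the normal form \eqref{Eqn4.8a} with $x_j^{\prime} \in_{\min} \C^{2m_j-1}$ and $\bj x_j^{\prime} \in_{\min} \C^{2m_j}$; and your uniqueness step assumes that sorting the $2k$ factors of \eqref{Eqn4.8a} into increasing order produces the paired symbol $(2m_1-1, 2m_1, \dots , 2m_k-1, 2m_k)$. Neither is automatic. When two $\mbH$-lines in \eqref{Eqn4.6a} minimally belong to the same $\C^{2m}$, case 2) of Lemma \ref{Lem3.1} intervenes, and the new vectors $\tilde{x}$ it produces need not respect the $x \leftrightarrow \bj x$ pairing nor land at an odd position $2m_j - 1$; Lemma \ref{Lem4.7} guarantees a mirrored algebraic identity on the right half, but not that the output re-assembles into $\mbH$-lines with the stated minimal containments. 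The statement that the increasing Whitehead symbol of any element of $\cC_{2n}^{(sk)}$ pairs up in this way, with $A_2 = \gs(A_1^*)$, is exactly Lemma \ref{Lem5.7} (i.e.\ \cite[Lemma 7.2]{KM}); it is the one nontrivial input you would need to prove or cite, and you flag the synchronization as ``the genuinely delicate point'' without closing it. Granting that lemma, the rest of your outline --- injectivity from Lemma \ref{Lem3.4} applied in $SU_{2n}$, the compactness/closed-map argument for b), and the two boundary degenerations for c) --- does go through.
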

Hence, the Schubert decomposition by the cells $S_{\bm}^{(sk)}$ gives a 
corresponding cell decomposition of $SU_{2n}/Sp_n$.  The cells 
$S_{\bm}^{(sk)}$ will be referred to as the {\em skew-symmetric Schubert 
cells of $SU_{2n}/Sp_n$ or $\cC_{2n}^{(sk)}$}.  We note that 
$\overline{S_{\bm}^{(sk)}}$ has a Borel-Moore fundamental class which we 
refer to as a {\em skew-symmetric Schubert cycle}. It is the image of the 
Borel-Moore fundamental class of the \lq\lq singular manifold\rq\rq 
$\tilde{S}_{\bm}^{(sk)}$.  It corresponds to the Borel-Moore fundamental 
class of the associated product of suspensions of quaternionic projective 
spaces.  We describe in \S \ref{S:sec5} the homology of $SU_{2n}/Sp_n$ 
and $\cC_{2n}^{(sk)}$ in terms of these skew-symmetric Schubert cycles.  
Furthermore, for $m = 2n$ the relation of $\cC_m^{(sk)}$ with $F_m^{(sk)\, c}$ allows us to give a Schubert decomposition for the Milnor fiber.  
\par
\begin{Remark}
\label{Rem4.14}
If in the initial factorization of $B \in \cC^{(sk)}_{2n}$ given in \eqref{Eqn4.6a} 
into a product of $\mbH$-pseudo-rotations, the orders for all of the 
$L^{(\gl_{\ell})}_j$ are all distinct then $1 < m_1 < m_2 < \cdots < m_k$.  
By the commutativity of the $\mbH$-pseudo-rotations, we may arrange 
them in increasing order and obtain \eqref{Eqn4.8a} without using 
Whitehead\rq s Lemma.  Hence, the skew-symmetric Schubert symbol is 
given by $\bm^{(sk)} = (m_1, m_2, \cdots , m_k)$, which would be the 
corresponding Schubert symbol in the quaternionic Grassmannian.  In general, 
the use of Whitehead\rq s Lemma has the effect of twisting the $\mbH$-lines 
which then again reappear from the form of the skew-symmetric factorization.
\end{Remark} 
\par
\section{Schubert Decomposition for Milnor Fibers }
\label{S:sec5} 
\par
In this section we apply the results giving the Schubert decomposition for 
the associated symmetric spaces providing compact models for the global 
Milnor fibers.  We first give the form that the Schubert decomposition gives 
for the specific Cartan models, and extending these to the Milnor fibers 
themselves.  Second, in doing this we give an algorithm due to 
Whitehead and Kadzisa-Mimura for identifying for a given matrix in the global 
Milnor fiber the Schubert cell to which it belongs.  Third, we will see the form 
that the Schubert decomposition takes for the global Milnor fibers using 
Iwasawa decomposition.  
\subsection*{Whitehead-Kadzisa-Mimura Algorithm for Identifying 
Schubert Cells} \hfill 
\par
The algorithm given by Kadzisa-Mimura \cite{KM} for the ordered 
factorizations of matrices in the various Cartan models uses the ordered 
factorization for $SU_m$ based on the work of Whitehead \cite{W} as 
developed by Miller \cite{Mi} and Yokota \cite{Y}.  They cleverly combine the 
uniqueness of the factorization for $U_m$ (and $SU_m$) and the Cartan 
conjugacy for the Cartan models to give the symmetric, respectively 
skew-symmetric, factorizations for the cases of $SU_m/SO_m$ and for $m 
= 2n$, $SU_{2n}/Sp_n$.   We explain this algorithm as it will apply to the 
compact models for global Milnor fibers and then for the global Milnor fibers 
themselves. \par
\par
An element of any of the Cartan models is a matrix $B \in SU_m$ for 
appropriate $m$.  Thus, by Lemma \ref{Lem3.3} we may obtain an ordered 
factorization by pseudo-rotations except with decreasing order for $B$.  
\begin{equation}
\label{Eqn5.1}
B \, \, = \,\,  A_{k} \cdot A_{k-1} \cdots A_{1}\, ,
\end{equation}
where $A_j = A_{(\theta_j, x_j)}$ with the $\{x_j \}$ a set of unit vectors 
with $x_j \in_{\min} \C^{m_j}$ and $1 \leq m_1 < m_2 < \cdots < m_k \leq 
m$, and $\theta_i \not \equiv 0  \, \mod 2\pi$ for each $i$.  In addition, if 
$m_1 = 1$ then the Schubert symbol is $\bm = (m_2, \dots , m_k)$.  Now 
from \eqref{Eqn5.1} we describe how to obtain either the symmetric or 
skew-symmetric ordered factorizations as obtained by Kadzisa-Mimura.  
\par
\subsubsection*{Ordered Symmetric Factorizations for $\cC^{(sy)}$} 
\par
As $B \in \cC^{(sy)}$, $\gs(B^{-1}) = B$.  Hence, as $\gs(B^{-1}) = 
\overline{B^{-1}} = B^T$, we obtain from \eqref{Eqn5.1}
$$  A_{k} \cdot A_{k-1} \cdots A_{1}   \, \, = \,\,  A_{1}^T \cdot A_{2}^T 
\cdots A_{k}^T \, . $$
As each $A_j = A_{(\theta_j, x_j)}$, $A_j^T = A_{(\theta_j, \bar{x}_j)}$ is a 
pseudo-rotation with $\bar{x}_j \in_{\min} \C^{m_j}$.  Thus, it follows by 
Lemma \ref{Lem3.4} that $A_1 = A_1^T$ and $x_1$ is real.  
Let $C_1 = A_{(\frac{\theta_1}{2}, x_1)}$.  We can write $A_1 = C_1 \cdot 
C_1$, and as $A_{(\theta_1, x_1)}$ is a pseudo-rotation about a real 
hyperplane, so is $C_1$.  Hence, $C_1 = C_1^T$ and $\gs(C_1) = C_1^*$.   
Then, from \eqref{Eqn5.1} since
\begin{equation}
\label{Eqn5.2}
B \, \, = \,\,  A_{k} \cdot A_{k-1} \cdots A_{1}\, ,
\end{equation} 
we have 
\begin{align}
\label{Eqn5.3}
C_1^*\cdot B \cdot \gs(C_1) \, \, &= \,\, (C_1^*\cdot A_{k}\cdot A_{k-1} 
\cdots A_{2} \cdot C_1) \cdot C_1 \cdot \gs(C_1)   \notag  \\
 \, \, &= \,\, (C_1^*\cdot A_{k}\cdot C_1)\cdot (C_1^*\cdot A_{k-1}\cdot 
C_1) \cdots (C_1^*\cdot A_{2} \cdot C_1)    \notag  \\
\, \, &= \,\,  A_{k}^{(2)} \cdot A_{k-1}^{(2)} \cdots A_{2}^{(2)}
\end{align}
where each $A_{j}^{(2)} = C_1^*\cdot A_{j}\cdot C_1$ is again a 
pseudo-rotation $A_{(\theta_j, x_j^{(2)})}$, with $x_j^{(2)} =  
C_1^{-1}(x_j)$ satisfying $x_j^{(2)}  \in_{\min} \C^{m_j}$ as $C_1 \equiv 
Id$ on $(\C^{m_1})^{\perp}$.  \par
Also, the LHS of \eqref{Eqn5.3} is the Cartan conjugate of the symmetric 
matrix $B$ and so is still symmetric (and in $SU_n$),
except now it is a product of $k-1$ pseudo-rotations with Schubert symbol 
$(m_k, \dots , m_2)$.  Thus we can inductively repeat the argument to 
write.
$$  C_j^* \cdots C_2^* \cdot C_1^*\cdot B \cdot \gs(C_1)\cdot 
\gs(C_2)\cdots \gs(C_j) \, \, = \,\, A_{k}^{(j+1)} \cdot A_{k-1}^{(j+1)} 
\cdots A_{j+1}^{(j+1)}  $$
which has Schubert symbol $(m_{j+1}, \dots , m_k)$.  After $k-1$ steps we 
obtain
\begin{equation}
\label{Eqn5.4} 
C_{k-1}^* \cdots C_2 ^*\cdot C_1^*\cdot B \gs(C_1) \cdot 
\gs(C_2)\cdots \gs(C_{k-1}) \, \, = \,\, A_{k}^{(k)}\, ,  
\end{equation}
with $A_{k}^{(k)} = A_{(\theta_k, x_k^{(k)})}$ for $x_k^{(k)} \in_{\min} 
\C^{m_k}$.  The last step then allows us to rewrite \eqref{Eqn5.4} as
\begin{equation}
\label{Eqn5.5} 
 B \, \, = \,\, C_1 \cdots C_{k-1} \cdot C_k\cdot \gs(C_k^*)\cdot \gs(C_{k-
1}^*)\cdots \gs(C_1^*)\, ,  
\end{equation}
which gives the ordered symmetric factorization.  \par
We obtain as a corollary of the algorithm
\begin{Corollary}
\label{Cor5.6}
If $B \in F_m^{(sy)\, c} = \cC^{(sy)}_m$, and has increasing Schubert symbol 
$\bm = (m_1, \dots , m_k)$, then the symmetric factorization has the same 
Schubert symbol $\bm^{(sy)} = \bm$.
\end{Corollary}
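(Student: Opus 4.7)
The plan is to verify the claim by induction through the Kadzisa--Mimura algorithm outlined in equations \eqref{Eqn5.2}--\eqref{Eqn5.5}, tracking the Schubert data that is preserved under the successive Cartan conjugacies. The starting point is that by the reversal statement (Corollary \ref{Cor3.5}), the decreasing factorization $B = A_k \cdot A_{k-1} \cdots A_1$ with $x_j \in_{\min} \C^{m_j}$ has the same increasing Schubert symbol $\bm = (m_1, \dots, m_k)$ as $B$ itself.

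The main inductive assertion is that, at the start of step $j$, the residual product $A_k^{(j)} \cdot A_{k-1}^{(j)} \cdots A_j^{(j)}$ is symmetric (being a Cartan conjugate of the symmetric $B$) and its decreasing factorization has increasing Schubert symbol $(m_j, \dots, m_k)$; moreover each $C_i$ previously extracted $(i<j)$ is a pseudo-rotation whose axis vector $x_i^{(i)}$ is real and lies in $\C^{m_i}$ minimally. From symmetry of the residual product and Lemma \ref{Lem3.4} applied to the two Schubert factorizations (one by the product and one by the transposed product), the innermost factor $A_j^{(j)}$ must satisfy $A_j^{(j)} = (A_j^{(j)})^T$, forcing $x_j^{(j)}$ to be real. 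One then sets $C_j = A_{(\theta_j^{(j)}/2, x_j^{(j)})}$ and Cartan-conjugates by $C_j$ to define $A_i^{(j+1)} = C_j^* \cdot A_i^{(j)} \cdot C_j = A_{(\theta_i, x_i^{(j+1)})}$ with $x_i^{(j+1)} = C_j^{-1}(x_i^{(j)})$.

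The key observation that closes the induction is the minimal-containment invariance: since $C_j^{-1}$ acts by $v \mapsto v - c\, x_j^{(j)}$ for a scalar $c$, and $x_j^{(j)} \in \C^{m_j}$ has zero coordinates in positions $m_j+1,\dots,m$, the map $C_j^{-1}$ leaves every coordinate of index $> m_j$ unchanged. In particular, for each $i > j$ the $m_i$-th coordinate of $x_i^{(j+1)}$ coincides with that of $x_i^{(j)}$ and is nonzero, so $x_i^{(j+1)} \in_{\min} \C^{m_i}$. Hence the residual product at step $j+1$ is again symmetric with the same ordered set of minimal flag levels $(m_{j+1}, \dots, m_k)$, and by uniqueness of the increasing Schubert symbol (Theorem \ref{Thm3.5}) the inductive hypothesis carries over. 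The same argument, applied to the axis vector of $C_j$ itself, shows that $x_j^{(j)} \in_{\min} \C^{m_j}$: the prior conjugations by $C_1,\dots,C_{j-1}$ act trivially on the coordinates of index $> m_{j-1} \geq m_j$'s predecessor, so the minimal level of $x_j$ is not altered.

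Once the induction is complete, the resulting symmetric factorization \eqref{Eqn5.5} reads
\begin{equation*}
B \;=\; C_1 \cdots C_k \cdot \gs(C_k^*) \cdots \gs(C_1^*),
\end{equation*}
with each $C_j$ a pseudo-rotation about the complexification of the real line $L_j = \R\langle x_j^{(j)}\rangle$ satisfying $L_j \subset_{\min} \C^{m_j}$. Comparing directly with the definition \eqref{Eqn4.2} of $\psi_{\bm}^{(sy)}$ for $\bm = (m_1,\dots,m_k)$, we see that $B$ lies in the image of $\psi_{\bm}^{(sy)}$ on $E_{\bm}^{(sy)}$, so $B \in S_{\bm}^{(sy)}$ and therefore $\bm^{(sy)}(B) = \bm$. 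The principal obstacle is the bookkeeping for (a): tracking that the minimal containment levels remain exactly $(m_{j+1},\dots,m_k)$ after each Cartan conjugacy; once the explicit support property of $C_j$ on coordinates is used, both the reality of the axes and the preservation of Schubert type follow uniformly.
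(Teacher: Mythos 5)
Your proposal is correct and follows essentially the same route as the paper: the paper's own argument is exactly the inductive Cartan-conjugation algorithm of \eqref{Eqn5.2}--\eqref{Eqn5.5}, using Lemma \ref{Lem3.4} to force reality of the innermost axis vector and the fact that $C_j \equiv \mathrm{Id}$ on $(\C^{m_j})^{\perp}$ to preserve the minimal containment levels $m_i$ for $i>j$. Your explicit coordinate bookkeeping for that last step is just a more detailed phrasing of the paper's observation.
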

\subsubsection*{Ordered Skew-symmetric Factorizations for $\cC^{(sk)}_m$} 
\par
The algorithm for $\cC^{(sk)}_m$, with $m = 2n$, is very similar and depends on the following 
lemma, see \cite[Lemma 7.2]{KM}. 
\begin{Lemma}
\label{Lem5.7}
If $B \in (U_{2n} \cap Sk_{m}(\C))\cdot J_n^{-1}$, with $m = 2n$, has a 
factorization as in \eqref{Eqn5.1}, then: $k$ is even, $m_1$ is odd, $m_2 = 
m_1 + 1$, and $A_2 = \gs(A_1^*)$.
\end{Lemma}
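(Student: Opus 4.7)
The plan is to exploit the defining relation $B = \gs(B^{-1})$ for the Cartan model and compare two different factorizations of $B$ via the Schubert uniqueness results. First I would compute $\gs(B^{-1})$ from the factorization $B = A_k \cdots A_1$: since $\gs$ is a group homomorphism and $\gs(A_{(\theta, x)}^{-1}) = A_{(\theta, \bj x)}$ (via properties iii) and iv) of pseudo-rotations applied with $J_n$), one obtains
\begin{equation*}
A_k \cdot A_{k-1} \cdots A_1 \,\, = \,\, A_{(\theta_1, \bj x_1)} \cdot A_{(\theta_2, \bj x_2)} \cdots A_{(\theta_k, \bj x_k)}.
\end{equation*}
Letting $\tilde m_j$ denote the minimal-subspace index of $\bj x_j$, a direct coordinate check gives $\tilde m_j = m_j$ when $m_j$ is even and $\tilde m_j = m_j + 1$ when $m_j$ is odd; in particular $\tilde m_j \leq \tilde m_{j+1}$ with equality precisely when $m_j$ is odd and $m_{j+1} = m_j + 1$.

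Next I would show that a collision $\tilde m_1 = \tilde m_2$ must occur at $j = 1$, forcing $m_1$ odd and $m_2 = m_1 + 1$. Assuming no such collision, $\tilde m_1$ is strictly smaller than every $\tilde m_j$ for $j \geq 2$. Running Whitehead's algorithm (Lemma \ref{Lem3.1}) to bring the RHS into Schubert increasing form then leaves the leftmost factor $A_{(\theta_1, \bj x_1)}$ untouched, because any intermediate reduction via Lemma \ref{Lem3.1}(2) cannot produce a subspace index $\ell < m_1$ (the final Schubert symbol equals $(m_1, \dots, m_k)$ by uniqueness, so values below $m_1$ cannot survive). On the other hand, by Lemma \ref{Lem3.4}(b) applied to $B = A_k \cdots A_1$, the leftmost factor of the Schubert increasing form of $B$ equals $A_1 = A_{(\theta_1, x_1)}$. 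Matching forces $\bj x_1 = \mu x_1$ for a unit scalar $\mu$, contradicting $\bj^2 = -I$, so the collision must occur at $j=1$.

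The identification $A_2 = \gs(A_1^*)$ follows by refining this analysis. Applying Lemma \ref{Lem3.1}(2) to $A_{(\theta_1, \bj x_1)} A_{(\theta_2, \bj x_2)}$ yields $A_{(\tilde\theta_1, \tilde y_1)} A_{(\tilde\theta_2, \tilde y_2)}$ with $\tilde y_1$ spanning $W \cap \C^{m_1}$ for $W = \langle \bj x_1, \bj x_2\rangle$ and $\tilde y_2$ orthogonal to $\tilde y_1$ in $W$. Matching with the Schubert increasing form of $B$ (whose first two factors are $A_1$ and $A_1^{-1} A_2 A_1$ by Lemma \ref{Lem3.4}(b)) forces $\tilde y_1 = x_1$, $W = \langle x_1, \bj x_1\rangle$ (using $\langle x_1, \bj x_1\rangle = 0$ from the $\mbH$-inner product), $\tilde y_2 = \bj x_1$, and hence $x_2 = \nu \bj x_1$. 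The angle $\tilde\theta_1$ is forced to equal $\theta_1$ (matching with $A_1$) and also to equal $\theta_2$ (by diagonalizing the commuting product $A_{(\theta_1, \bj x_1)} A_{(\theta_2, x_1)}$ on $W$, whose $\langle x_1\rangle$-eigenvalue is $e^{i\theta_2}$). Thus $\theta_1 = \theta_2$ and $A_2 = A_{(\theta_2, \nu \bj x_1)} = A_{(\theta_1, \bj x_1)} = \gs(A_1^*)$.

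Finally, $k$ even follows by iteration: the matched pair $(A_1, A_2)$ combines into the $\mbH$-pseudo-rotation $\tilde A_{(\theta_1, \mbH x_1)}$, and applying the same analysis to the next-smallest Schubert index shows it too is odd and paired with its successor, yielding consecutive pairs $(m_{2i-1}, m_{2i})$ throughout, so $k = 2r$. The main technical obstacle I anticipate is the claim in the second paragraph that Whitehead's algorithm cannot introduce a subspace index below $m_1$; making this rigorous requires tracking how each intermediate application of Lemma \ref{Lem3.1}(2) preserves consistency with the final Schubert symbol of $B$, so every introduced index lies in $\{m_1, \dots, m_k\}$ and hence is at least $m_1$.
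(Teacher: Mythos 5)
The paper does not actually prove this lemma --- it defers to \cite[Lemma 7.2]{KM} --- so there is no internal proof to compare against; your strategy (exploit $B = \gs(B^{-1})$, rewrite $\gs(A_{(\theta,x)}^{-1}) = A_{(\theta,\, \bj x)}$, and play the two factorizations off against each other via the uniqueness results of \S 3) is the natural one. But there are genuine gaps. First, the coordinate computation is wrong in the even case: if $x \in_{\min} \C^{2i}$ with $x_{2i-1} = 0$ then $\bj x \in_{\min} \C^{2i-1}$ (e.g.\ $\bj e_2 = e_1$), so $\tilde m_j$ can equal $m_j - 1$, and the asserted monotonicity of the $\tilde m_j$ fails in general (take $m_1$ odd, $m_2 = m_1+1$, $x_{2,m_1}=0$: then $\tilde m_1 = m_1+1 > m_1 = \tilde m_2$). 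The consequence you actually use --- that absent the collision $\tilde m_1 < \tilde m_j$ for all $j \geq 2$ --- happens to survive, but only after redoing the case analysis. Second, and more seriously, the step you yourself flag is a real gap: knowing the final symbol is $(m_1,\dots,m_k)$ does not prevent Whitehead reductions among positions $2,\dots,k$ from producing an intermediate factor of index $\leq \tilde m_1$ (for instance an index equal to $m_1 < \tilde m_1 = m_1+1$ when $m_1$ is odd, or a transient index below $m_1$ that later cancels), and any such factor must be commuted past position $1$, destroying the identification of the leftmost factor with $A_{(\theta_1,\, \bj x_1)}$. The same interference problem undercuts the matching of the first two factors in your third paragraph, where moreover $W = \langle x_1, \bj x_1\rangle$ only yields $x_2 \in \langle x_1, \bj x_1\rangle$, not $\langle x_2\rangle = \langle \bj x_1\rangle$. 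Finally, ``$k$ even by iteration'' needs the observation that $A_1^* \cdot B \cdot \gs(A_1)$ is again a Cartan conjugate, hence again satisfies $B = \gs(B^{-1})$ --- this is exactly what \eqref{Eqn5.8} supplies after the lemma --- since otherwise the defining relation is unavailable for the truncated product.

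For the first three assertions there is a cleaner route using only tools already in the paper. The span $V = \langle x_1,\dots,x_k\rangle$ is preserved by every Whitehead move of Lemma \ref{Lem3.1} and so coincides with the sum of the eigenspaces of $B$ for eigenvalues $\neq 1$; by Lemma \ref{Lem4.6}(b) it is an $\mbH$-subspace, and by Remark \ref{Rem3.8} its Grassmannian Schubert symbol is $(m_1,\dots,m_k)$. Since each $\C^{2i}$ is an $\mbH$-subspace, $\dim_{\C}(V\cap\C^{2i})$ is always even, and one checks that the jumps of $\dim_{\C}(V\cap\C^{p})$ therefore occur in consecutive pairs $(2i-1,2i)$ with the first entry odd; this gives $k$ even, $m_1$ odd, and $m_2 = m_1+1$ at once. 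The identification $A_2 = \gs(A_1^*)$ still requires the factor-by-factor comparison you attempt, so the interference issue above must be resolved in any case.
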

Here $\gs(A) = J_n\cdot \overline{A} \cdot  J_n^{-1}$ and $A_1 = 
A_{(\theta_1, x_1)}$ with $x_1 \in_{\min} \C^{m_1}$, for which we may 
arrange $x_1 = (x_{1, 1}, \dots , x_{1, m_1})$ with $x_{1, m_1} > 0$.  
Then, by properties of pseudo-rotations 
 $A_2 = \gs(A_1^*) = A_{(\theta_1, \bj x_1)}$ (hence, $A_2\cdot A_1$ is 
an $\mbH$-pseudo-rotation and $A_1$ and $A_2$ commute).  We may then 
rewrite \eqref{Eqn5.1} as 
\begin{align}
\label{Eqn5.8}
A_1^*\cdot B \cdot \gs(A_1) \, \, &= \,\, A_1^*\cdot A_{k}\cdot A_{k-1} 
\cdots  A_{3} \cdot A_1 \cdot \gs(A_1^*) \cdot \gs(A_1)   \notag  \\
 \, \, &= \,\, (A_1^*\cdot A_{k}\cdot A_1)\cdot (A_1^*\cdot A_{k-1}\cdot 
A_1) \cdots  (A_1^*\cdot A_{3} \cdot A_1)    \notag  \\
\, \, &= \,\,  A_{k}^{(2)} \cdot A_{k-1}^{(2)} \cdots A_{3}^{(2)}
\end{align}
where each $A_{j}^{(2)} = A_1^*\cdot A_{j}\cdot A_1$ is again a 
pseudo-rotation $A_{(\theta_j, x_j^{(2)})}$, with $x_j^{(2)} =  
A_1^{-1}(x_j)$ satisfying $x_j^{(2)}  \in_{\min} \C^{m_j}$ as $A_1 \equiv 
Id$ on $(\C^{m_1})^{\perp}$.  \par
Also, the LHS of \eqref{Eqn5.8} is the Cartan conjugate of 
$B$ for which $B\cdot J_n$ is skew-symmetric (and in $U_{2n}$); and so it  
also has these properties, except now it is a product of $k-2$ 
pseudo-rotations with Schubert symbol $(m_k, \dots , m_3)$.  Thus we can 
inductively repeat the argument.  After $\frac{k}{2}$ steps we obtain a 
factorization in the form
\begin{align}
\label{Eqn5.9} 
 B \, \, &= \,\, A_{(\theta_1, x_1^{\prime})} \cdots A_{(\theta_r, 
x_r^{\prime})}\cdot \gs(A_{(\theta_r, x_r^{\prime})}^*) \cdots 
\gs(A_{(\theta_1, x_1^{\prime})}^*)\, ,  \notag  \\ 
\, \, &= \,\,  A_{(\theta_1, x_1^{\prime})} \cdots A_{(\theta_r, 
x_r^{\prime})}\cdot A_{(\theta_r, \,\bj x_r^{\prime})} \cdots 
A_{(\theta_1, \,\bj x_1^{\prime})}\, .
\end{align}
Here $k = 2r$, and each $\mbH <  x_r^{\prime} > \subset_{\min} 
\C^{2m_j}$.  This gives the ordered skew-symmetric factorization.  
By \eqref{Eqn4.7d} we may write each 
$A_{(\theta_j, \bj x_j^{\prime})} = J_n\cdot A_{(\theta_j, 
x_j^{\prime})}^T \cdot J_n^{-1}$, and then by \eqref{Eqn4.12a} we may alternately 
write \eqref{Eqn5.9} in the form 
\begin{equation}
\label{Eqn5.9b} 
B \, \, = \,\, A_{(\theta_1, x_1^{\prime})} \cdots A_{(\theta_r, 
x_r^{\prime})}\cdot J_n \cdot A_{(\theta_r, x_r^{\prime})}^T \cdots 
A_{(\theta_1, x_1^{\prime})}^T \cdot J_n^{-1}\, .
\end{equation}
\par
We obtain as a corollary of the algorithm.
\begin{Corollary}
\label{Cor5.10}
If $B \in \cC^{(sk)}_m = F_m^{(sk)\, c}\cdot J_n^{-1}$ (with $m = 2n$), then it 
has an increasing Schubert symbol of the form 
$\bm = (2m_1-1, 2m_1 , 2m_2-1, 2m_2 , \dots , 2m_r-1, 2m_r)$ with 
$1 < m_1 < m_2 , \cdots < m_r \leq n$.  Then 
the ordered skew-symmetric factorization has the skew-symmetric 
Schubert symbol $\bm^{(sk)} = (m_1, m_2 , \dots , m_r)$. 
\end{Corollary}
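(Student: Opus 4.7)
The plan is to iterate Lemma \ref{Lem5.7} using the Cartan-conjugation reduction already described in equations \eqref{Eqn5.8}--\eqref{Eqn5.9}. Since $B \in \cC^{(sk)}_m \subset SU_{2n}$, Lemma \ref{Lem3.3} supplies the Whitehead ordered factorization
\[ B \,=\, A_k \cdot A_{k-1} \cdots A_1, \]
with $A_j = A_{(\theta_j, x_j)}$, $x_j \in_{\min} \C^{m_j}$, and $1 \leq m_1 < m_2 < \cdots < m_k \leq 2n$. Applying Lemma \ref{Lem5.7} to this factorization gives that $k$ is even, $m_1$ is odd, $m_2 = m_1 + 1$, and $A_2 = \gs(A_1^*)$. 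Setting $\mu_1 = (m_1+1)/2$, we obtain $m_1 = 2\mu_1 - 1$ and $m_2 = 2\mu_1$.

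Next, form the Cartan conjugate $B^{(2)} = A_1^* \cdot B \cdot \gs(A_1)$. By property (i) of the Cartan model, the Cartan action of $A_1^*$ preserves $\cC^{(sk)}_{2n}$, so $B^{(2)} \in \cC^{(sk)}_{2n}$ and the hypothesis of Lemma \ref{Lem5.7} is again satisfied. The computation in \eqref{Eqn5.8} identifies $B^{(2)} = A_k^{(2)} \cdots A_3^{(2)}$ with $A_j^{(2)} = A_{(\theta_j, A_1^{-1}(x_j))}$ and $A_1^{-1}(x_j) \in_{\min} \C^{m_j}$ (using that $A_1 \equiv \mathrm{Id}$ on $(\C^{m_1})^{\perp}$); this is the Whitehead ordered factorization of $B^{(2)}$ with increasing Schubert sequence $(m_3, \dots , m_k)$. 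A second application of Lemma \ref{Lem5.7} therefore gives that $m_3$ is odd and $m_4 = m_3 + 1$. Continuing the induction for $r = k/2$ steps exhausts all pseudo-rotations, and the Schubert sequence organizes into consecutive pairs $(m_{2j-1}, m_{2j}) = (2\mu_j - 1, 2\mu_j)$, producing the claimed form $\bm = (2\mu_1 - 1, 2\mu_1, \dots , 2\mu_r - 1, 2\mu_r)$. The strict inequalities $m_1 < m_2 < \cdots < m_k$ translate to $\mu_1 < \mu_2 < \cdots < \mu_r$, and $m_k \leq 2n$ yields $\mu_r \leq n$.

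For the skew-symmetric Schubert symbol, reversing the iterated Cartan conjugation (as in \eqref{Eqn5.9}) yields the ordered skew-symmetric factorization
\[ B \,=\, A_{(\theta_1, x_1')} \cdots A_{(\theta_r, x_r')} \cdot A_{(\theta_r,\, \bj x_r')} \cdots A_{(\theta_1,\, \bj x_1')}, \]
with $\mbH\langle x_j' \rangle \subset_{\min} \C^{2\mu_j}$. Matching this expression against the parametrization $\psi^{(sk)}_{\bm^{(sk)}}$ defined in \eqref{Eqn4.13} places $B$ in $\psi^{(sk)}_{\bm^{(sk)}}(E^{(sk)}_{\bm^{(sk)}}) = S^{(sk)}_{\bm^{(sk)}}$ for $\bm^{(sk)} = (\mu_1, \dots , \mu_r)$, which by Theorem \ref{Thm4.13}(b) uniquely determines the skew-symmetric Schubert symbol. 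The main technical hurdle is verifying at each stage that the Cartan-conjugated element $B^{(j)}$ stays in $\cC^{(sk)}_{2n}$ so that Lemma \ref{Lem5.7} continues to apply; this reduces to invariance of the Cartan model under the Cartan action recorded as property (i) of $M$.
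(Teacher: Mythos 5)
Your proposal is correct and follows essentially the same route as the paper: iterating Lemma \ref{Lem5.7} via the Cartan conjugation step \eqref{Eqn5.8}, noting that each conjugate stays in the Cartan model so the lemma reapplies, and reading off the paired entries $(2m_j-1,2m_j)$ after $k/2$ steps to get the skew-symmetric factorization \eqref{Eqn5.9} and its symbol. Your closing appeal to the parametrization \eqref{Eqn4.13} and Theorem \ref{Thm4.13}(b) just makes explicit what the paper leaves implicit.
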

\par
To use the preceding results for the global Milnor fibers, we use in each case 
the Iwasawa decomposition, which is given for $SL_n$ by the 
Gram-Schmidt process, to determine the Schubert cell decomposition.
\par
\subsection*{Global Milnor Fibers for the Variety of Singular 
$m \times m$-Matrices} \hfill 
\par
This is the simplest case and was essentially covered in Proposition 
\ref{Prop1.1}.  Given $B \in F_m$, the global Milnor fiber, we have $F_m = 
SL_m(\C)$.  To obtain its representation in the Iwasawa decomposition 
$SL_m(\C) = SU_m\cdot A_m\cdot N_m$ where $A_m$ denotes the group 
of diagonal matrices with positive entries, and $N_m$ is the nilpotent group 
of upper triangular complex matrices with $1$\rq on the diagonal.  We may 
apply the Gram-Schmidt process to the columns of $B$ to obtain $B = 
A\cdot C$, where $A$ is unitary and $C$ is upper triangular  with positive 
entries on the diagonal.  As $\det(B)= 1$, $\det(A)$ is a unit complex 
number, and $\det(C) > 0$; it follows that both $\det(A)= \det(C) = 1$; 
thus, $C$ belongs to $\Sol_m = A_m\cdot N_m$. Then by applying the 
method of \S 3 for giving an ordered factorization for $A$ gives the 
Schubert symbol for $A$, which we shall also use for $B$.  Thus, we may 
describe the Schubert decomposition for the global Milnor fiber $F_m$ as 
follows.
\begin{Thm}
\label{5.13}
The Schubert decomposition of the global Milnor fiber $F_m$ for the variety 
of $m \times m$ general complex matrices is given, 
via the diffeomorphism with $SL_m(\C)$, by the disjoint union of the {\em 
Schubert cells} $S_{\bm}\cdot Sol_m$ where the $S_{\bm}$ are the 
Schubert cells of $SU_m$ for all Schubert symbols $\bm = (m_1, \dots , 
m_k)$ with $1 < m_1 < \dots < m_k \leq m$. 
\end{Thm}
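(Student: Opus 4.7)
The plan is to combine three ingredients that are already in place: the identification $F_m \simeq SL_m(\C)$ from \S\ref{S:sec1}, the Iwasawa decomposition $SL_m(\C) = SU_m \cdot \Sol_m$ realized concretely by Gram--Schmidt on the columns, and the Schubert decomposition of $SU_m$ from Theorem~\ref{Thm3.5}. The proposition \ref{Prop1.1} has in fact already been set up precisely to allow this kind of assembly; the task of the present proof is to verify that the hypothesis of that proposition applies to the cells $S_{\bm}$ of $SU_m$ and to package the conclusion as a statement about $F_m$.

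First, I would observe that for any $B \in F_m = SL_m(\C)$, Gram--Schmidt applied to the columns yields a unique factorization $B = A\cdot C$ with $A \in U_m$ and $C$ upper triangular with positive diagonal. The determinant condition $\det(B)=1$ combined with $\det(C)>0$ forces $\det(A) = \det(C)=1$, so $A \in SU_m$ and $C \in \Sol_m$. This means the map $SU_m \times \Sol_m \to SL_m(\C)$, $(A,C)\mapsto A\cdot C$ is a real-algebraic diffeomorphism, exactly the hypothesis needed to feed into Proposition \ref{Prop1.1} with the compact model $Y = SU_m$.

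Next, I would apply Theorem \ref{Thm3.5} to write $SU_m = \bigsqcup_{\bm} S_{\bm}$, where $\bm = (m_1, \dots , m_k)$ ranges over increasing sequences with $1 < m_1 < \cdots < m_k \leq m$ (together with the empty symbol corresponding to $I$). Each $S_{\bm}$ is an open cell via the homeomorphism $\psi_{\bm}: E_{\bm} \to S_{\bm}$, and the closures satisfy the boundary condition (c) of Theorem \ref{Thm3.5}. Plugging this cell family $\{e_i\} = \{S_{\bm}\}$ into Proposition \ref{Prop1.1} immediately gives that $F_m = \bigsqcup_{\bm} S_{\bm}\cdot \Sol_m$ as a disjoint union; that each $S_{\bm}\cdot \Sol_m$ is homeomorphic to a cell of real dimension $\dim_{\R}S_{\bm} + 2\binom{m}{2}+m-1$; and that the closure relation $\overline{S_{\bm}\cdot \Sol_m} = \overline{S_{\bm}}\cdot \Sol_m$ still lies inside the union of lower-dimensional cells.

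The potential subtleties are minor and essentially bookkeeping: one should check that the Gram--Schmidt map really is the same as the Iwasawa projection used to establish the product diffeomorphism $SU_m \times \Sol_m \simeq F_m$ (so that the disjointness of the products $S_{\bm}\cdot \Sol_m$ follows from the disjointness of the $S_{\bm}$, and not merely from set-theoretic intersection in $SL_m(\C)$), and that the Schubert symbol of $B$ is well-defined as the Schubert symbol of its unitary factor $A$. Both are routine given what has already been established: the first is the standard fact that Gram--Schmidt gives the $KAN$ factorization for $SL_m(\C)$, and the second is immediate from the uniqueness of the Schubert symbol within $SU_m$ (Theorem \ref{Thm3.5}(a), together with Lemma \ref{Lem3.4}). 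I do not foresee any genuine obstacle; the content of the theorem is essentially that the Schubert decomposition of the compact model transports, via the Iwasawa product, to the global Milnor fiber.
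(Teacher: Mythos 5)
Your proposal is correct and follows essentially the same route as the paper: the paper likewise obtains the Iwasawa factorization $B = A\cdot C$ by Gram--Schmidt on the columns, uses the determinant argument to place $A \in SU_m$ and $C \in \Sol_m$, assigns to $B$ the Schubert symbol of its unitary factor, and then invokes Proposition~\ref{Prop1.1} together with Theorem~\ref{Thm3.5} to transport the cell decomposition of $SU_m$ to $F_m$. The bookkeeping points you flag (Gram--Schmidt realizing the Iwasawa projection, uniqueness of the symbol) are exactly the ones the paper treats as routine.
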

\par
\subsection*{Global Milnor Fibers for the Variety of Singular 
$m \times m$-Symmetric Matrices} \hfill
\par
If $B \in F_m^{(sy)}$, then we want to relate $B$ to a matrix $C \in F_m^{(sy)\, 
c} = SU_m \cap Sym_m(\C) = \cC^{(sy)}_m$.  As $B$ is symmetric and 
$\det(B) = 1$, as in \cite[Table 1]{D3} we may diagonalize the quadratic 
form $X^T\cdot B\cdot X$, for column vectors $X$ so there is a $C \in 
SL_m(\C)$ so that $(CX)^T\cdot B\cdot CX = X^T\cdot X$. Thus, $C^T\cdot 
B\cdot C = I_m$ or 
$B = (C^{-1})^T\cdot C^{-1}$.  Then, by Iwasawa decomposition $C^{-1} = 
A\cdot E$, with $A \in SU_m$ and $E \in Sol_m$.  Then, $B = 
E^T\cdot(A^T\cdot A)\cdot E$, and $A^T\cdot A \in \cC_m^{(sy)}$.  If 
$\bm = (m_1, m_2, \dots , m_k)$ is the Schubert symbol for $\tilde{A} = 
A^T\cdot A$, it is also the symmetric Schubert symbol and so $\tilde{A} =  
A^T\cdot A \in S^{(sy)}_{\bm}$ and conversely.  \par
We let $Sol_m^T$ denote the group of lower triangular complex matrices $E$ 
with positive entries on the diagonal and $\det(E) = 1$.  Then, there is the 
action of $Sol_m^T$ on $\cC_m^{(sy)}$ as follows : 
$$Sol_m^T \times \cC_m^{(sy)} \to \cC_m^{(sy)}\qquad \mbox{sending} 
\qquad (E,\tilde{A}) \mapsto E\cdot \tilde{A}\cdot E^T\, .  $$  
Then, the action applied to each Schubert cell $S^{(sy)}_{\bm}$ gives by 
Proposition \ref{Prop1.1} the Schubert cell for $F_m^{(sy)}$ which we denote 
by $Sol_m^T\cdot (S^{(sy)}_{\bm})$.  Combining this with Theorem 
\ref{Thm4.5}  we obtain  
\begin{Thm}
\label{Thm5.14}
The Schubert decomposition of the global Milnor fiber $F_m^{(sy)}$ for the 
variety of $m \times m$ symmetric complex matrices is given by 
the disjoint union of the {\em symmetric Schubert cells} $Sol_m^T\cdot 
(S^{(sy)}_{\bm})$ for $S_{\bm}^{(sy)}$ the symmetric Schubert cells of 
$SU_m/SO_m$ for all symmetric Schubert symbols $\bm^{(sy)} = (m_1, 
\dots , m_k)$ with $1 < m_1 < \dots < m_k \leq m$.  \par
Furthermore, the preceding algorithm using ordered factorization gives the 
symmetric Schubert symbol for a given matrix in $F_m^{(sy)}$.  
\end{Thm}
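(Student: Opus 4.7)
The plan is to combine Theorem~\ref{Thm4.5} (Schubert decomposition of the Cartan model $\cC_m^{(sy)} \cong SU_m/SO_m$) with the parametrization of $F_m^{(sy)}$ established in the paragraph preceding the theorem, transferring the cell decomposition from the compact model to the global Milnor fiber via the $\Sol_m^T$-action $(\tilde A, F) \mapsto F \tilde A F^T$.  This is the symmetric-matrix analogue of Proposition~\ref{Prop1.1}.

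I would proceed in three steps. First, I would formalize the setup: the paragraph before the theorem exhibits, for every $B \in F_m^{(sy)}$, a factorization $B = F \tilde A F^T$ with $\tilde A \in \cC_m^{(sy)}$ and $F \in \Sol_m^T$, obtained by diagonalizing the quadratic form $X^T B X$ as $C^T B C = I_m$ and Iwasawa-decomposing $C^{-1} = A \cdot E$, so that $\tilde A = A^T A$ and $F = E^T$.  This expresses $F_m^{(sy)} = \bigcup_{\tilde A \in \cC_m^{(sy)}} \Sol_m^T \cdot \tilde A$.  Second, by Theorem~\ref{Thm4.5} we have the disjoint decomposition $\cC_m^{(sy)} = \bigsqcup_{\bm} S^{(sy)}_{\bm}$ indexed by symmetric Schubert symbols; partitioning the $\Sol_m^T$-orbits accordingly yields $F_m^{(sy)} = \bigsqcup_{\bm} (\Sol_m^T \cdot S^{(sy)}_{\bm})$.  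The closure-containment property of Theorem~\ref{Thm4.5}(c) transfers by continuity of the $\Sol_m^T$-action, and each piece has the structure of a cell using the contractibility of $\Sol_m^T$ together with the cell structure of $S^{(sy)}_{\bm}$.  Third, the algorithmic claim follows directly: given $B$, extract $\tilde A$ via the factorization procedure, then apply Corollary~\ref{Cor5.6} to read off $\bm^{(sy)}(\tilde A)$, which by construction is the symbol of the cell containing $B$.

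The main obstacle is showing that the symbol $\bm^{(sy)}(\tilde A)$ depends only on $B$ and not on the choice of $C$ used in the diagonalization --- equivalently, that the cells $\Sol_m^T \cdot S^{(sy)}_{\bm}$ are genuinely disjoint in $F_m^{(sy)}$.  Replacing $C$ by $OC$ for $O \in SO_m(\C)$ changes the Iwasawa decomposition of $C^{-1}$ and produces a new pair $(\tilde A', F')$ in which $\tilde A'$ differs from $\tilde A$ by an $SU_m$-type Cartan conjugation arising when one re-runs the Iwasawa decomposition on $E O^{-1}$.  Proving invariance of the Schubert symbol under this change should reduce to verifying that the ordered symmetric factorization algorithm of the section produces the same symbol on any two such representatives, which in turn rests on the uniqueness of the Schubert factorization in $SU_m$ supplied by Theorem~\ref{Thm3.5} together with Corollary~\ref{Cor5.6}.
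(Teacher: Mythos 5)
Your proposal follows the paper's argument: the paper obtains exactly the factorization $B = E^T(A^T\cdot A)E$ by diagonalizing the quadratic form and applying Iwasawa decomposition to $C^{-1}$, and then transfers the cell decomposition of Theorem~\ref{Thm4.5} to $F_m^{(sy)}$ through the $\Sol_m^T$-action $(E,\tilde{A})\mapsto E\tilde{A}E^T$ by invoking Proposition~\ref{Prop1.1}, with Corollary~\ref{Cor5.6} supplying the algorithmic statement, just as you describe. The only divergence is in how the disjointness you flag as the "main obstacle" is settled: the paper does not argue via uniqueness of Schubert factorizations but relies on the product structure $\cC_m^{(sy)}\times \Sol_m^T\simeq F_m^{(sy)}$ coming from the homogeneous-space description $F_m^{(sy)}\simeq SL_m(\C)/SO_m(\C)$, which is what Proposition~\ref{Prop1.1} presupposes and which yields both disjointness of the cells $\Sol_m^T\cdot S_{\bm}^{(sy)}$ and their cell structure at once.
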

\par
\subsection*{Global Milnor Fibers for the Variety of Singular 
$m \times m$ Skew-Symmetric Matrices} \hfill
\par
For the case of $B \in F_m^{(sk)}$ with $m = 2n$, we follow an analogous 
argument to the preceding. We first want to relate $B$ to a matrix $C \in 
F_m^{(sk)\, c} = SU_m \cap Sk_m(\C)$, and then use the relation $F_m^{(sk)\, 
c}\cdot J_n^{-1}  = \cC^{(sk)}_m$ to determine the skew-symmetric 
factorization for $C\cdot J_n^{-1}$ to determine its skew-symmetric 
Schubert type.  \par
As $B$ is skew-symmetric with $\Pf(B) = 1$, as in \cite[Table 1]{D3} we 
may block diagonalize the quadratic form $X^T\cdot B\cdot X$, for column 
vectors $X$ so there is a $C \in SL_m(\C)$ so that $(CX)^T\cdot B\cdot CX 
= X^T\cdot J_n\cdot X$. Thus, $C^T\cdot B\cdot C =  J_n$ or 
$B = (C^{-1})^T\cdot J_n\cdot C^{-1}$.  Then, we again apply Iwasawa 
decomposition $C^{-1} = A\cdot E$, with $A \in SU_m$ and $E \in Sol_m$.  
Then, $B = E^T\cdot(A^T\cdot J_n\cdot A)\cdot E$, and $\tilde{A} = 
A^T\cdot J_n\cdot A \in  SU_m \cap Sk_m(\C)$.  It follows $\tilde{A}\cdot 
J_n^{-1} \in \cC_m^{(sk)}$.  The Schubert symbol $\bm = (2m_1-1, 2m_1, 
2m_2-1, 2m_2, \dots , 2m_k-1, 2m_k)$ for $\tilde{A}\cdot J_n^{-1}$ is 
obtained from the ordered factorization of $\tilde{A}\cdot J_n^{-1}$.  By 
\eqref{Eqn5.9b}, this may be alternatively written as a skew-symmetric 
factorization of $\tilde{A}$
\begin{equation}
\label{Eqn5.15b} 
\tilde{A} \, \, = \,\, A_{(\theta_1, x_1^{\prime})} \cdots A_{(\theta_{k}, 
x_{k}^{\prime})}\cdot J_n \cdot A_{(\theta_{k}, x_{k}^{\prime})}^T 
\cdots A_{(\theta_1, x_1^{\prime})}^T\, .
\end{equation}
By Corollary \ref{Cor5.10}, $\bm^{(sk)} = (m_1, m_2, \dots , m_k)$ is the 
skew-symmetric Schubert symbol.  Then, under the map $\cC_m^{(sk)} \to 
F_m^{(sk)}$ given by right multiplication by $J_n$, i.e.  
$\tilde{A}\cdot J_n^{-1} \mapsto \tilde{A} \in SU_m \cap Sk_m(\C) = 
F_m^{(sk)}$, we have $S^{(sk)}_{\bm}$ mapping diffeomorphically to 
$S^{(sk)}_{\bm}\cdot J_n \subset F_m^{(sk)}$.  
Hence, we again use the action of $Sol_m^T$ but on $F_m^{(sk)}$ given by : 
$$Sol_m^T \times F_m^{(sk)} \to F_m^{(sk)}\qquad \mbox{sending} \qquad 
(E,\tilde{A}) \mapsto E\cdot \tilde{A}\cdot E^T\, .  $$  
Then, from the action applied to each Schubert cell $S^{(sk)}_{\bm}$ after 
right multiplication by $J_n$ gives by Proposition \ref{Prop1.1} the Schubert 
cell for $F_m^{(sk)}$ which we denote by $Sol_m^T\cdot 
(S^{(sk)}_{\bm}\cdot J_n)$.  Combining this with Theorem \ref{Thm4.5}  we 
obtain  
\begin{Thm}
\label{5.15}
The Schubert decomposition of the global Milnor fiber $F_m^{(sk)}$ for the 
variety of $m \times m$ skew-symmetric complex matrices (with $m = 2n$) 
is given by the disjoint union of the {\em skew-symmetric Schubert cells} 
$Sol_m^T\cdot (S^{(sk)}_{\bm}\cdot J_n)$ corresponding to the 
skew-symmetric Schubert cells $S^{(sk)}_{\bm}$ of $\cC_m^{(sk)}$, for all 
skew-symmetric Schubert symbols $\bm^{(sk)} = (m_1, \dots , m_k)$ with 
$1 < m_1 < \dots < m_k \leq n$.  \par
Furthermore, the preceding algorithm using ordered factorization gives the 
associated skew-symmetric Schubert symbol for a given matrix in 
$F_m^{(sk)}$.  
\end{Thm}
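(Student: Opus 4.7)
The plan is to assemble the pieces developed immediately before the statement: the block diagonalization of the skew-symmetric form, the Iwasawa decomposition, and the Schubert decomposition of the compact model $\cC_m^{(sk)}$ from Theorem \ref{Thm4.13}. The structural template is the same as that used for Theorems \ref{5.13} and \ref{Thm5.14}, with Proposition \ref{Prop1.1} serving as the bridge between the compact model and the global Milnor fiber.

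Given $B \in F_m^{(sk)}$, I would first apply the block diagonalization of the quadratic form $X^T \cdot B \cdot X$ described in \cite[Table 1]{D3} to obtain $C \in SL_m(\C)$ satisfying $C^T \cdot B \cdot C = J_n$, equivalently $B = (C^{-1})^T \cdot J_n \cdot C^{-1}$. Next I would apply Iwasawa decomposition $C^{-1} = A \cdot E$ with $A \in SU_m$ and $E \in Sol_m$, so that $B = E^T \cdot (A^T \cdot J_n \cdot A) \cdot E$. Setting $\tilde{A} = A^T \cdot J_n \cdot A$, the identity $\tilde{A}^T = -\tilde{A}$ together with $\tilde{A} \in SU_m$ shows that $\tilde{A} \cdot J_n^{-1} \in \cC_m^{(sk)}$.

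At this point I would invoke Theorem \ref{Thm4.13} to place $\tilde{A} \cdot J_n^{-1}$ in a unique Schubert cell $S_{\bm}^{(sk)}$, and then use Corollary \ref{Cor5.10} to conclude that the ordered skew-symmetric factorization \eqref{Eqn5.9b} for $\tilde{A}$ produces the corresponding skew-symmetric Schubert symbol $\bm^{(sk)} = (m_1, \dots, m_k)$. Right multiplication by $J_n$ gives a diffeomorphism from $\cC_m^{(sk)}$ onto $F_m^{(sk)\, c} = SU_m \cap Sk_m(\C)$, which carries each $S_{\bm}^{(sk)}$ to $S_{\bm}^{(sk)} \cdot J_n$. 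The action $(E, \tilde{A}) \mapsto E \cdot \tilde{A} \cdot E^T$ of $Sol_m^T$ on $F_m^{(sk)}$ then sweeps out the claimed Schubert cells $Sol_m^T \cdot (S_{\bm}^{(sk)} \cdot J_n)$, and the algorithmic assertion follows because the ordered factorization was produced intrinsically from $\tilde{A} \cdot J_n^{-1}$.

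The main obstacle will be to verify that the resulting cells are genuinely disjoint and exhaust $F_m^{(sk)}$, i.e.\ that the map $\cC_m^{(sk)} \times Sol_m^T \to F_m^{(sk)}$ sending $(\tilde{A}\cdot J_n^{-1}, E) \mapsto E^T \cdot \tilde{A} \cdot E$ is a diffeomorphism. The surjectivity and decomposition step are covered by the construction above, while injectivity reduces to two uniqueness facts: the uniqueness of the Iwasawa factorization $C^{-1} = A \cdot E$ in $SL_m(\C)$, and the uniqueness up to $Sp_n(\C)$ (absorbed into the $A$ factor) of the diagonalization $C^T \cdot B \cdot C = J_n$ of a skew form with fixed Pfaffian. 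Once these are invoked, the disjointness of the Schubert cells in $F_m^{(sk)}$ follows from the disjointness of the $S_{\bm}^{(sk)}$ in $\cC_m^{(sk)}$ together with the $Sol_m^T$-equivariance, exactly as in the proof of Proposition \ref{Prop1.1}.
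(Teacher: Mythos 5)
Your proposal follows the paper's own argument essentially verbatim: block diagonalization of the skew form to get $C^T\cdot B\cdot C = J_n$, Iwasawa decomposition $C^{-1}=A\cdot E$, passage to $\tilde{A}=A^T\cdot J_n\cdot A$ and $\tilde{A}\cdot J_n^{-1}\in\cC_m^{(sk)}$, then Theorem \ref{Thm4.13}, Corollary \ref{Cor5.10}, right multiplication by $J_n$, and the $Sol_m^T$-action combined with Proposition \ref{Prop1.1}. The extra remarks on disjointness and exhaustion are a reasonable elaboration of what the paper leaves implicit in its appeal to Proposition \ref{Prop1.1}, but the route is the same.
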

\par
\section{Representation of the Dual Classes in Cohomology}
\label{S:sec6} 
Having given the Schubert decomposition for the global Milnor fibers in terms 
of the corresponding Cartan models, we now consider how the Schubert 
decomposition corresponds to the (co)homology of the global Milnor fibers as 
given in \cite{D3}, which was deduced from that of the corresponding 
symmetric spaces.  We will refer to the closures of the Schubert cells in 
each case as {\em Schubert cycles} of the appropriate type.  We shall see 
that for both the general and skew-symmetric cases the Schubert cycles 
are cycles whose fundamental classes define 
$\Z$-homology classes.  For the symmetric case, the symmetric Schubert 
cycles are only $\mod 2$-cycles which define unique $\Z/2\Z$-homology 
classes.  The situation is somewhat similar to that for real Grassmannians 
where the $\Z/2\Z$-cohomology classes correspond to real Schubert cycles, 
while the rational classes are more difficult to identify in terms of the 
Schubert decomposition. 
\par
This identification is made using the standard method (see e.g. \cite[Chap. 
IX, \S 4]{Ma}) for computing the (co)homology of a finite CW-complex $X$ 
with skeleta $\{ X^{(k)}\}$ with coefficient ring $R$ using the finite algebraic 
complex $C_k(\{ X^{(k)}\}) = H_k(X^{(k)}, X^{(k-1)}; R)$, with boundary map 
given by the boundary map for the exact sequence of a triple.  Then, 
$\rk_R(C_k(\{ X^{(k)}\}))$ equals the number of cells $q_k$ of dimension 
$k$.  Thus, $\rk_R H_k(X; R) \leq q_k$ with equality iff the closures of the 
cells of dimension $k$ give a free set of generators for $H_k(X; R)$.  
Likewise the cohomology is computed from the complex $C^k(\{ X^{(k)}\}) = 
H^k(X^{(k)}, X^{(k-1)}; R)$ using the coboundary map for the exact sequence 
of a triple in cohomology. 
\par
  
\subsection*{Milnor Fiber for the Variety of Singular 
$m \times m$-Matrices} \hfill 
\par
We consider the Schubert decomposition for $F_m$ obtained from that for the compact model $F_m^{c} = SU_m$ as a result of Theorem \ref{5.13}.  
Then, the homology of $SU_m$ can be computed from the algebraic complex 
with basis formed from the Schubert cells $S_{\bm}$.  By a result of Hopf, 
the homology of $SU_m$ (which is isomorphic as a graded $\Z$-module to its 
cohomology) is given as a graded $\Z$-module by

$$  H_*(SU_m; \Z) \,\,  \simeq \,\, \gL^*\Z \langle s_3, s_5, \dots , 
s_{2m-1} \rangle \, . $$
where $s_{2j-1}$ has degree $2j-1$.  Then, a count shows that $H_q(SU_n; 
\Z)$ is spanned by $s_{2m_1-1}\cdot s_{2m_2 - 1}\cdots s_{2m_k-1}$ 
where $1 < m_1 < m_2 < \cdots < m_k \leq m$ and  $q = \sum_{j = 1}^{k} 
(2m_j -1)$.  This equals the number of Schubert cells $S_{\bm}$ of real 
dimension $q$.  Thus, each $\overline{S_{\bm}}$ defines a $\Z$-homology 
class of dimension $\dim_{\R} S_{\bm}$.  Together they form a basis for 
$H_{q}(SU_m; \Z)$.  Also, $\psi_{\bm}(\tilde{S}_{\bm}) = \overline{S_{\bm}}$ 
and $\tilde{S}_{\bm}$ has a top homology class in $H_{q}(\tilde{S}_{\bm}; 
\Z)$ for $q = \dim_{\R}(\tilde{S}_{\bm})$, which we can view as a 
fundamental class for $\tilde{S}_{\bm}$ for Borel-Moore homology.  We have 
a similar dimension count in cohomology, so that the duals of the classes 
$\overline{S_{\bm}}$ via the Kronecker pairing give a $\Z$-basis for 
cohomology.  \par 
Then, as $F_m^{c} = SU_m$ and the inclusion $\iti_m : F_m^{c} 
\hookrightarrow F_m$  is a homotopy equivalence, we obtain the following
\begin{Thm}
\label{Thm6.1}
The homology $H_*(F_m; \Z)$ has for a free $\Z$-basis the fundamental 
classes of the Schubert cycles, given as images $\iti_{m\, *}\circ 
\psi_{\bm\, *}([\tilde{S}_{\bm}]) = \psi_{\bm\, *}(\tilde{S}_{\bm}) = 
\overline{S_{\bm}}$ as we vary over the Schubert decomposition of 
$SU_m$.  The Kronecker duals of these classes give the $\Z$-basis for the 
cohomology 
$$  H^*(SU_m; \Z) \,\,  \simeq \,\, \gL^*\Z \langle e_3, e_5, \dots , 
e_{2m-1} \rangle \, . $$ 
Moreover, the Kronecker duals of the {\em simple Schubert classes} 
$S_{(m_1)}$ are homogeneous generators of the exterior algebra 
cohomology. 
\end{Thm}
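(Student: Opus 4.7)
The plan is to exploit the match between the Schubert CW structure on $SU_m$ given by Theorem \ref{Thm3.5} and Hopf's computation of $H_*(SU_m;\Z)$, then transport everything across the homotopy equivalence $\iti_m : F_m^c \hookrightarrow F_m$.

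First, I would perform a dimension count. A Schubert symbol $\bm = (m_1,\dots,m_k)$ with $1 < m_1 < \cdots < m_k \le m$ gives a cell $S_{\bm}$ of real dimension $\sum_{j}(2m_j - 1)$, by the cell structure of $\tilde{S}_{\bm}$ described before Theorem \ref{Thm3.5}. On the Hopf side, a monomial basis element $s_{2m_1-1}\wedge\cdots\wedge s_{2m_k-1}$ of $\gL^*\Z\langle s_3, s_5, \dots, s_{2m-1}\rangle$ is indexed by the same data (since the generators are $s_{2j-1}$ for $2 \le j \le m$) and has the same degree. So the number of Schubert cells in dimension $q$ equals $\rk_{\Z} H_q(SU_m;\Z)$.

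Second, I would argue that this numerical match forces the cellular chain differentials of the Schubert CW structure to vanish. The cellular complex $C_*$ has $\rk C_q = q_q :=$ number of $q$-cells, and the identity $\rk C_q = \rk\ker d_q + \rk\mathrm{im}\,d_q$ combined with $\rk H_q = \rk\ker d_q - \rk\mathrm{im}\,d_{q+1}$ gives $\rk\mathrm{im}\,d_q + \rk\mathrm{im}\,d_{q+1} = \rk C_q - \rk H_q = 0$. Since $H_*(SU_m;\Z)$ is torsion-free, this forces $d_* = 0$, so each cell $S_{\bm}$ represents a free $\Z$-generator of $H_*(SU_m;\Z)$. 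I would then identify this generator with the pushforward $\psi_{\bm\, *}[\tilde{S}_{\bm}]$: because $\tilde{S}_{\bm}$ is a product of suspensions of complex projective spaces (hence a closed oriented pseudomanifold with a Borel--Moore fundamental class), and because by Theorem \ref{Thm3.5}(b) the restriction $\psi_{\bm}|_{E_{\bm}}$ is a homeomorphism onto $S_{\bm}$ with $\psi_{\bm}(\tilde{S}_{\bm}\setminus E_{\bm})$ contained in lower-dimensional cells (Theorem \ref{Thm3.5}(c)), the image class matches the cellular generator associated to $S_{\bm}$. Transferring via the homotopy equivalence $\iti_m$ gives the claimed $\Z$-basis $\{\iti_{m\,*}\psi_{\bm\,*}[\tilde{S}_{\bm}]\}$ of $H_*(F_m;\Z)$.

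Third, universal coefficients combined with freeness gives that the Kronecker pairing $H^*(F_m;\Z)\otimes H_*(F_m;\Z)\to\Z$ is a perfect pairing, so the Kronecker duals of the Schubert classes form a free $\Z$-basis for $H^*(F_m;\Z) \simeq \gL^*\Z\langle e_3, e_5, \dots, e_{2m-1}\rangle$. For the final statement on simple Schubert symbols, in degree $2m_1-1$ the cohomology group is $\Z\langle e_{2m_1-1}\rangle$ of rank one, and the only Schubert cell of this dimension is $S_{(m_1)}$; hence the Kronecker dual of $\overline{S_{(m_1)}}$ is $\pm e_{2m_1-1}$, so these duals can be taken as the homogeneous generators of the exterior algebra.

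The main obstacle is the cellular-representability step: one must verify that the pushforward of the Borel--Moore fundamental class of the singular space $\tilde{S}_{\bm}$ really corresponds to the generator of $H_q(X^{(q)},X^{(q-1)};\Z)$ attached to the cell $S_{\bm}$. This is where the explicit product-of-suspensions description of $\tilde{S}_{\bm}$ and the homeomorphism $\psi_{\bm}|_{E_{\bm}}$ of Theorem \ref{Thm3.5}(b) are essential; everything else is a dimension-counting and torsion-free bookkeeping argument.
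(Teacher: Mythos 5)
Your first two steps — the count matching Schubert cells in dimension $q$ against the rank of $H_q(SU_m;\Z)$ from Hopf's theorem, the vanishing of the cellular differentials, the identification of the cellular generators with $\psi_{\bm\,*}[\tilde{S}_{\bm}]$, and the transfer to $F_m$ and to cohomology via the Kronecker pairing — are exactly the paper's argument, and they correctly establish everything except the final assertion.

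The proof of the last statement has a genuine gap. You claim that in degree $2m_1-1$ the cohomology is rank one and $S_{(m_1)}$ is the only Schubert cell of that dimension. This fails as soon as $2m_1-1 \ge 15$: for example $15 = 3+5+7$, so $H^{15}(SU_m;\Z)$ for $m\ge 8$ has rank $2$, spanned by $e_{15}$ and $e_3e_5e_7$, and there are two Schubert cells of dimension $15$, namely $S_{(8)}$ and $S_{(2,3,4)}$. So the Kronecker dual of $\overline{S_{(8)}}$ is a priori only some element of a rank-two lattice, and even knowing it is a member of a $\Z$-basis of that graded piece would not make it an algebra generator (e.g.\ $e_3e_5e_7$ completes a basis of $H^{15}$ but is decomposable). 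One needs an argument for indecomposability. The paper gets this by induction on $m$ using the tower $SU_{n-1}\hookrightarrow SU_n$: the Schubert decomposition respects the inclusion, the kernel of $i_{n-1}^*$ is the ideal $(e_{2n-1})$ and is spanned by the duals of the cells with $m_k=n$, and $S_{(n)}$ is the unique such cell in degree $2n-1$, which pins its dual down to $\pm e_{2n-1}$; the duals of $S_{(m_1)}$ with $m_1<n$ are then identified by restriction, where $i_{n-1}^*$ is injective in degrees below $2n-1$. (Alternatively, the Hopf-algebra primitivity argument of Theorem \ref{Thm6.9} does the job.) You should replace your rank-one argument with one of these.
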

\begin{proof}
The preceding discussion establishes all of the theorem except for the last 
statement about the generators of the cohomology algebra.  We prove this 
by induction on $m$.  It is trivially true for $m = 1, 2$.  Suppose it is true for 
$m < n$ and let $i_{n-1} : SU_{n-1} \hookrightarrow SU_n$ denote the 
natural inclusion $A \mapsto \bigl( \begin{smallmatrix} A & 0 \\ 0 & 1 
\end{smallmatrix}\bigr)$.  The Schubert decomposition preserves the 
inclusion so that any $S_{\bm}$ for $\bm = (m_1, m_2, \cdots , m_k)$ with  
$m_k < n$ is contained in the image of $i_{n-1}$ and so is also a Schubert 
cell for $SU_{n-1}$; while if $m_k = n$, then $S_{\bm}$ is in the complement 
of the image of 
$SU_{n-1}$.  Thus, if the result is true for $SU_{n-1}$, the Kronecker 
duals to the simple $S_{(m_1)}$ with $m_1 < n$ restrict via $i_{n-1}^*$ to 
the Kronecker duals of the  $S_{(m_1)}$ with $m_1 < n$ viewed as Schubert 
cells of $SU_{n-1}$.  Thus, they map to the generators of the exterior 
algebra $\gL^*\Z <e_3, e_5, \cdots e_{2n-3}>$.  Also, the Kronecker dual 
to any $S_{\bm}$ with $m_k = n$ is zero on any Schubert cell of $SU_{n-1}$ 
so by a counting argument the kernel of $i_{n-1}^*$, which is the ideal 
generated by $e_{2n-1}$, is spanned by the Kronecker duals of the Schubert 
cells with $m_k = n$.  \par 
Now there is a unique Schubert class of this type of degree $2n-1$, and 
hence its Kronecker dual is the added generator which together with the 
others for $S_{(m_1)}$ with $m_1 < n$ generate $H^*(SU_n ; \Z)$.
\end{proof}
\par
There is also the question of identifying the Kronecker dual of the Schubert 
cycle $[\overline{S}_{\bm}]$ for $\bm = (m_1, m_2, \cdots , m_k )$, which 
we denote by $e_{\bm}$.  We claim it is given up to sign by the cohomology 
class $e_{2m_1-1}\cdot e_{2m_2 - 1}\cdots e_{2m_k-1}$ (where the 
products denote cup-products).  
We show this using the product structure of the group $SU_m$ to give a 
product representation for the closures of Schubert cells together with the 
Hopf algebra structure of $H^*(SU_m)$.  \par 
We let $\overline{S_{\bm}}\cdot \overline{S_{\bm^{\prime}}}$ denote the 
group product in $SU_m$ of the closures of Schubert cells 
$\overline{S_{\bm}}$ and $\overline{S_{\bm^{\prime}}}$.  We also use the 
simpler notation $S_{m_1}$ to denote the Schubert cell $S_{\bm}$ when 
$\bm = (m_1)$.  In particular, we emphasize that 
$$ S_{m_1} \,\, = \,\,  \{ A_{(-\theta, e_1)}\cdot A_{(\theta, x_1)} : 
\theta \in (0, 2\pi), x_1 \in_{\min} \C^{m_1}\}\, . $$ 
\par
First, as result of Lemma \ref{Lem3.1} we obtain the following version of a 
Lemma due to J.H.C. Whitehead (see e.g. \cite[Lemma 4.2]{KM} or 
\cite[Lemma 2.2]{Mi}).  \par
\begin{Lemma}
\label{Lem6.2}
For Schubert cells in $\cC_m$ for $SU_m$, 
\begin{itemize}
\item[1)] If $1 < m_1 < m_2\leq m$ then
$$  \overline{S_{m_2}}\cdot \overline{S_{m_1}} \,\, = \,\, 
\overline{S_{m_1}}\cdot \overline{S_{m_2}} \,\, = \,\,  \overline{S_{(m_1, 
m_2)}} \, . $$
\item[2)] If $1 < m^{\prime} \leq m$, then 
$$  \overline{S_{m^{\prime}}}\cdot \overline{S_{m^{\prime}}} \,\, 
\subseteq \,\,  \overline{S_{(m^{\prime}-1, m^{\prime})}} \, . $$
\end{itemize}
\end{Lemma}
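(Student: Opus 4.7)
Both parts follow by direct manipulation of products of pseudo-rotations via Whitehead's rearrangement Lemma~\ref{Lem3.1} applied to the explicit forms \eqref{Eqn3.4} of the parametrizing maps $\psi_{m}$ and $\psi_{(m_1,m_2)}$. I will verify the equalities on generic representatives of the open cells; the statements for closures then follow automatically because each $\tilde S_{\bm}$ is compact and each $\psi_{\bm}$ is continuous, so images of products are closed and identifications on open dense cells extend to their closures.

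For part 1), a generic element of $S_{m_1}\cdot S_{m_2}$ is
$$\bigl[A_{(-2\pi t_1, e_1)} \cdot A_{(2\pi t_1, L_1)}\bigr]\cdot \bigl[A_{(-2\pi t_2, e_1)} \cdot A_{(2\pi t_2, L_2)}\bigr],$$
with $L_i\subset_{\min}\C^{m_i}$. Since $1 < m_1$, Lemma~\ref{Lem3.1}(1) lets me transpose $A_{(2\pi t_1, L_1)}$ past $A_{(-2\pi t_2, e_1)}$, replacing $L_1$ by $L_1' = A_{(2\pi t_2, e_1)}L_1$; the line $L_1'$ remains minimally in $\C^{m_1}$ because $A_{(\cdot, e_1)}$ preserves the standard flag. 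Fusing the two $A_{(\cdot, e_1)}$ factors into $A_{(-2\pi(t_1+t_2), e_1)}$ then produces exactly $\psi_{(m_1,m_2)}((t_1, L_1'),(t_2, L_2)) \in S_{(m_1,m_2)}$. Running the swap backward (setting $L_1 = A_{(-2\pi t_2, e_1)}L_1'$) furnishes the reverse inclusion. For the order $\overline{S_{m_2}}\cdot\overline{S_{m_1}}$, I apply Lemma~\ref{Lem3.1}(1) twice: first to move $A_{(2\pi t_2, L_2)}$ past $A_{(-2\pi t_1, e_1)}$, and then to move the resulting $A_{(2\pi t_2, L_2')}$ past $A_{(2\pi t_1, L_1)}$, which is permitted since $m_2 > 1$ and $m_2 > m_1$. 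The endpoint is again of $\psi_{(m_1,m_2)}$ form, giving the double equality.

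For part 2), the same opening combination produces $A_{(-2\pi(t_1+t_2), e_1)} \cdot A_{(2\pi t_1, L_1')} \cdot A_{(2\pi t_2, L_2)}$ with both $L_1', L_2 \subset_{\min} \C^{m'}$. Lemma~\ref{Lem3.1}(2) then rewrites the final pair as $A_{(\tilde\theta_1, \tilde x_1)}\cdot A_{(\tilde\theta_2, \tilde x_2)}$ with $\tilde x_1 \subset_{\min}\C^k$ for some $k \leq m'-1$ and $\tilde x_2 \subset_{\min}\C^{m'}$; generically $k = m'-1$. The $SU_m$ condition forces $\tilde\theta_1 + \tilde\theta_2 \equiv 2\pi(t_1 + t_2) \pmod{2\pi}$, so the compensating factor $A_{(-2\pi(t_1+t_2), e_1)}$ matches the one required by $\psi_{(m'-1,m')}$. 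Since $\CP^{k-1}\subseteq \CP^{m'-2}$ for $k \leq m'-1$, the line $\langle\tilde x_1\rangle$ lies in the parameter space of $\psi_{(m'-1,m')}$; hence the full product lies in $\overline{S_{(m'-1,m')}}$ whether $k = m'-1$ (generic) or $k < m'-1$ (degenerate).

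The principal obstacle is the bookkeeping at each swap: one must verify that the transformed line still lies minimally in the intended $\C^{m_i}$ of the standard flag, and track the compensating $A_{(\cdot, e_1)}$ factors so that the $SU_m$ condition is preserved throughout. Handling closures is straightforward via compactness of each $\tilde S_{\bm}$ and continuity of $\psi_{\bm}$, which force the images of products to be closed subsets of $SU_m$, with dense identifications on open cells automatically extending.
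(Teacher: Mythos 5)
Your proposal is correct and follows essentially the same route the paper takes: deriving both parts from the Whitehead rearrangement Lemma~\ref{Lem3.1}, interchanging and fusing the determinant-compensating factors $A_{(\cdot, e_1)}$, and passing to closures by compactness of the $\tilde{S}_{\bm}$ and continuity of multiplication. The only point worth a passing word is the degenerate case in part 2) where $\langle L_1'\rangle = \langle L_2\rangle$ (excluded from Lemma~\ref{Lem3.1}(2)); there the two factors commute and combine into a single pseudo-rotation lying in $\overline{S_{(m')}} \subseteq \overline{S_{(m'-1, m')}}$, so the inclusion still holds.
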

We note that this differs slightly from the above referred to Lemmas as 
each element in $S_{m_1}$ is a product of two pseudo-rotations, one of 
which is $A_{(-\theta, e_1)}$.  However, by the Lemma, this 
pseudo-rotation can also be interchanged with other $A_{(\theta, x_j)}$, and 
combined via multiplication with other $A_{(-\theta^{\prime}, e_1)}$.  
We also note in the Lemma that 
$\dim_{\R} S_{(m^{\prime}-1, m^{\prime})} \leq 2\cdot \dim_{\R} 
S_{m^{\prime}} - 2$.  \par
We can inductively repeat this to obtain 
\begin{Lemma}
\label{Lem6.3}
For Schubert cells $S_{m_j}$ in $\cC_ m$ (for $SU_m$):
\begin{itemize}
\item[1)] If $\bm = (m_1, m_2, \dots , m_r)$ then
$$  \overline{S_{\bm}} \,\, = \,\, \overline{S_{m_1}}\cdot 
\overline{S_{m_2}} \cdots \overline{S_{m_r}} \, . $$
\item[2)] If $\bm = (m_1, m_2, \dots , m_r)$ and $\bm^{\prime} = 
(m_1^{\prime}, m_2^{\prime}, \dots , m_{r^{\prime}}^{\prime})$ with 
$\{m_1, m_2, \dots , m_r\} \cap \{m_1^{\prime}, m_2^{\prime}, \dots , 
m_{r^{\prime}}^{\prime}\} = \emptyset$ then 
$$  \overline{S_{\bm}}\cdot \overline{S_{\bm^{\prime}}} \,\, = \,\,  
\overline{S_{\bm^{\prime\prime}}} \, . $$
where $\bm^{\prime\prime}$ is the union of $\bm$ and $\bm^{\prime}$ in 
increasing order.
\item[3)] If $\bm = (m_1, m_2, \dots , m_r)$ and $\bm^{\prime} = 
(m_1^{\prime}, m_2^{\prime}, \dots , m_{r^{\prime}}^{\prime})$ with 
$\{m_1, m_2, \dots , m_r\} \cap \{m_1^{\prime}, m_2^{\prime}, \dots , 
m_{r^{\prime}}^{\prime}\} \neq \emptyset$ then 
$$  \overline{S_{\bm}}\cdot \overline{S_{\bm^{\prime}}} \,\, \subset \,\,  
\cC_m^{(q)} \, . $$
where $q \leq \dim_{\R} S_{\bm} + \dim_{\R} S_{\bm^{\prime}} - 2$.
\end{itemize}
\end{Lemma}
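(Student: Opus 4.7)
I would prove (1) directly from the defining formula for $\psi_{\bm}$, then reduce (2) and (3) to iterated applications of Lemma~\ref{Lem6.2}.

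\textbf{Part (1).} A generic element $B$ of $S_{\bm}$ has the canonical form
\[
B \,\,=\,\, A_{(-2\pi\tilde t,\, e_1)} \cdot A_{(2\pi t_1, L_1)} \cdots A_{(2\pi t_r, L_r)}\, ,
\]
with $L_j \subset_{\min} \C^{m_j}$ and $\tilde t = \sum t_j$. Since pseudo-rotations about the common line $\langle e_1\rangle$ commute and add angles, I split the correction factor as $A_{(-2\pi\tilde t, e_1)} = \prod_{i=1}^r A_{(-2\pi t_i, e_1)}$. For each $i\geq 2$, I then use property (iii) of pseudo-rotations (equivalently Lemma~\ref{Lem3.1}(1), applied with $m_j>1$) to slide $A_{(-2\pi t_i, e_1)}$ rightward past each preceding $A_{(2\pi t_j, L_j)}$, producing an updated line $L_j^{\text{new}} = A_{(-2\pi t_i, e_1)}(L_j)$ which remains minimally in $\C^{m_j}$ (the last nonzero coordinate of a generator of $L_j$ is fixed by the action). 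After all these slides, $B$ regroups as
\[
B \,\,=\,\, \prod_{i=1}^r \bigl( A_{(-2\pi t_i, e_1)} \cdot A_{(2\pi t_i, L_i^{\text{new}})}\bigr),
\]
exhibiting $B$ as an element of $\overline{S_{m_1}}\cdot \overline{S_{m_2}}\cdots \overline{S_{m_r}}$. Passing to closures gives the inclusion $\overline{S_{\bm}} \subseteq \overline{S_{m_1}}\cdots \overline{S_{m_r}}$. The reverse inclusion is obtained by running the same manipulations in reverse, sliding all individual correction factors leftward and consolidating them into a single $A_{(-2\pi \tilde t, e_1)}$, which delivers the canonical $\psi_{\bm}$ form.

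\textbf{Parts (2) and (3).} By (1), $\overline{S_{\bm}}\cdot\overline{S_{\bm'}}$ equals the ordered product of the $\ell(\bm)+\ell(\bm')$ simple closures $\overline{S_{m_i}}$ and $\overline{S_{m_j'}}$ (in their original order of appearance). I then perform a bubble-sort: Lemma~\ref{Lem6.2}(1) lets me swap any two adjacent simple factors with distinct indices while preserving the set-product of closures. In the disjoint case (Part (2)), the sort terminates with the indices in strictly increasing order, and reassembling via (1) gives $\overline{S_{\bm''}}$. In the overlap case (Part (3)), whenever the sort brings two factors $\overline{S_m}\cdot\overline{S_m}$ adjacent, Lemma~\ref{Lem6.2}(2) absorbs them into $\overline{S_{(m-1,m)}}$, replacing two cells of real dimension $2m-1$ by a cell-closure of dimension $(2m-3)+(2m-1) = 4m-4$, a strict loss of~$2$ from the trivial upper bound $2(2m-1)$. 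Iterating, every component of the resulting union of $\overline{S_{\bm''}}$ has real dimension at most $\dim_{\R} S_{\bm}+\dim_{\R} S_{\bm'}-2$, which is the required skeletal bound.

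\textbf{Main obstacle.} The principal technicality is the careful bookkeeping in Part (1): verifying that each Whitehead slide preserves the minimal-containment $L_j^{\text{new}}\subset_{\min} \C^{m_j}$ and that the identification of the two sides extends cleanly to closures, so that boundary degenerations (some $t_i = 0$ or $L_i$ non-generic) on one side correspond exactly to boundary degenerations on the other and do not escape the target closure. Once (1) is in place, Parts (2) and (3) reduce to a purely combinatorial sorting argument, with Lemma~\ref{Lem6.2} supplying every swap or absorption needed.
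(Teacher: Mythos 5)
Your proposal is correct and follows essentially the same route as the paper: part (1) by sliding the $A_{(\cdot\,, e_1)}$ correction factors via the Whitehead lemma (the paper consolidates them leftward starting from the product of simple cells, you distribute them starting from the canonical form, but both directions are used to get equality), and parts (2) and (3) by the same sorting argument with Lemma~\ref{Lem6.2}(1) for disjoint swaps and Lemma~\ref{Lem6.2}(2) for the dimension-dropping absorption of repeated indices.
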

\begin{proof}
For 1) we consider a product in $S_{m_1}\cdot S_{m_2} \cdots S_{m_r}$ 
which has the form 
\begin{equation}
\label{Eqn6.4} 
B \, \, = \,\, (A_{(-\theta_1, e_1)}\cdot A_{(\theta_1, x_1)}) \cdot (A_{(-
\theta_2, e_1)}\cdot A_{(\theta_2, x_2)}\cdots (A_{(-\theta_r, 
e_1)}\cdot A_{(\theta_r, x_r)})
\end{equation}   
where each $x_j \in_{\min} \C^{m_j}$.  
Then, we may repeatedly apply the Whitehead lemma to move each 
$A_{(-\theta_j, e_1)}$ to the left and obtain a factorization in the form
\begin{equation}
\label{Eqn6.5} 
B \, \, = \,\, A_{(-\tilde{\theta}, e_1)}\cdot A_{(\theta_1, x_1^{\prime})} 
\cdot A_{(\theta_2, x_2^{\prime}}) \cdots A_{(\theta_r, x_r^{\prime})})
\end{equation}
 where $\tilde{\theta} = \sum_{j = 1}^{r} \theta_j$ and each $x_j^{\prime} 
\in_{\min} \C^{m_j}$.  Hence, $B \in S_{\bm}$.  Conversely we can reverse 
the process beginning with $B$ in \eqref{Eqn6.5} and obtain a factorization 
as in \eqref{Eqn6.4}.  This gives the equality for the Schubert cells.  Since 
the closures are compact, we obtain the equality of 1) by taking closures of 
the Schubert cells.  \par
Given 1) we may write 
\begin{equation}
\label{Eqn6.6}
S_{\bm}\cdot S_{\bm^{\prime}}\, \, = \,\,  (S_{m_1}\cdot S_{m_2} \cdots 
S_{m_r}) \cdot (S_{m_1^{\prime}}\cdot S_{m_2^{\prime}} \cdots 
S_{m_{r^{\prime}}^{\prime}})
\end{equation}
If $\{m_1, m_2, \dots , m_r\} \cap \{m_1^{\prime}, m_2^{\prime}, \dots , 
m_{r^{\prime}}^{\prime}\} = \emptyset$, then we can repeatedly apply a) 
of the Whitehead Lemma to move an element of $S_{m_j^{\prime}}$ across 
an element of $S_{m_i}$ when $m_i > m_j^{\prime}$ while preserving the 
order of the $m_i$\rq s and $m_j^{\prime}$\rq s.  We arrive at an ordered 
factorization with increasing order $\bm^{\prime\prime}$, which is the union 
of $\bm$ and $\bm^{\prime}$ in increasing order.  Taking closures of the 
Schubert cells then gives 2).  \par
Finally, for 3), we may begin with \eqref{Eqn6.6}.  There are smallest 
$m_{\ell} = m^{\prime}_{k}$.  Then, if $m^{\prime}_j  < m^{\prime}_{k}$ then it differs from all $m_{i}$.  Hence, we can first move the elements in 
$S_{m_j^{\prime}}$ across all of those in $S_{m_{i}}$ as in the previous case by 2) of Lemma \ref{Lem6.3}.   Next, we can move elements in $S_{m_{k^{\prime}}}$ across those in $S_{m_j}$ as long as $m_j > m_{\ell}$.  Then, we arrive at a factorization where we have successive terms in $S_{m_{\ell}}$ and $S_{m_{k^{\prime}}}$ with $m_{\ell} = m^{\prime}_{k}$.  Then, we may 
apply b) of the Whitehead lemma (or 2) of Lemma \ref{Lem6.2}) and obtain a new 
pair in $S_{\tilde{m}}$ and $S_{m_{\ell}}$ with $\tilde{m} \leq m_{\ell}-1$.  
This has the effect of reducing the sum of the Schubert symbol values in the 
product by at least $1$.  Also, further application of the Whitehead Lemma will 
not increase the sum.  Hence, by further application of the Whitehead Lemma 
we obtain a product in the union of Schubert cells of dimension 
$q \leq \dim_{\R} S_{\bm} + \dim_{\R} S_{\bm^{\prime}} - 2$.
  Thus, it lies in the $q$-skeleton of $\cC_m$.  This gives 3) when 
we take closures.
\end{proof}
\par
Now we will use the Hopf structure of $H^*(SU_n)$ to relate the 
fundamental classes from the Schubert decomposition with the cohomology 
classes via the Kronecker pairing.  Let $\mu : SU_n \times SU_n \to SU_n$ 
denote the multiplication map.  Then, we can use Lemma \ref{Lem6.3} to 
determine the effect of $\mu_*$ for homology using the complex $C_k(\{ 
X^{(k)}\})$ and then the coproduct map $\mu^*$ for the Hopf algebra.  We 
obtain as a corollary of Lemma \ref{Lem6.3}.
\begin{Corollary}
\label{Cor6.7}
We let $s_{\bm}$ denote the homology class obtained from $\psi_{\bm\, 
*}([\tilde{S}_{\bm}])$ with restriction to positive orientation for $E_{\bm}$.  
For $\bm = (m_1, m_2, \dots , m_r)$ and $\bm^{\prime} = (m_1^{\prime}, 
m_2^{\prime}, \dots , m_{r^{\prime}}^{\prime})$ we let $\itm = \{ m_1, 
m_2, \dots , m_r\} \cap \{ m_1^{\prime}, m_2^{\prime}, \dots , 
m_{r^{\prime}}^{\prime}\}$ and let $\bm^{\prime\prime} = 
(m_1^{\prime\prime}, m_2^{\prime\prime}, \dots , 
m_{r^{\prime\prime}}^{\prime\prime})$ denote the union of the elements of 
$\bm$ and $\bm^{\prime}$ written in increasing order.  Then,
\begin{equation}
\label{Eqn6.8}
\mu_*(s_{\bm} \otimes s_{\bm^{\prime}})\,  = \, \begin{cases}
\gevar_{\bm, \bm^{\prime}}\cdot s_{\bm^{\prime\prime}}& \text{ if } \itm 
= \emptyset, \\
0& \text{ if } \itm \neq \emptyset\, .
\end{cases}
\end{equation}
where $\gevar_{\bm, \bm^{\prime}}$ is the sign of the permutation which 
moves $(\bm, \bm^{\prime})$ to increasing order.
\end{Corollary}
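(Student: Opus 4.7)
The plan is to reduce the statement to a direct application of Lemma \ref{Lem6.3}, carried out at the level of the cellular chain complex $C_*(\{\cC_m^{(k)}\})$ associated with the Schubert filtration of $SU_n = \cC_m$. Under the Künneth identification, the class $s_{\bm}\otimes s_{\bm^{\prime}}$ is represented, up to sign, by the image of the product fundamental class of $\tilde{S}_{\bm}\times\tilde{S}_{\bm^{\prime}}$ under $\psi_{\bm}\times\psi_{\bm^{\prime}}$, and $\mu_*(s_{\bm}\otimes s_{\bm^{\prime}})$ is then represented by the image of that product class under $\mu\circ(\psi_{\bm}\times\psi_{\bm^{\prime}})$, whose support is precisely $\overline{S_{\bm}}\cdot\overline{S_{\bm^{\prime}}}$.

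I would then split into the two cases of Lemma \ref{Lem6.3}. In case $\itm\neq\emptyset$, part 3) of the Lemma places the support of $\mu_*(s_{\bm}\otimes s_{\bm^{\prime}})$ in the $q$-skeleton $\cC_m^{(q)}$ with $q\le\dim_{\R}S_{\bm}+\dim_{\R}S_{\bm^{\prime}}-2$. The degree of $s_{\bm}\otimes s_{\bm^{\prime}}$ is $\dim_{\R}S_{\bm}+\dim_{\R}S_{\bm^{\prime}}$, strictly larger than $q$, and a cellular chain supported below its own degree is zero in $H_*(\cC_m^{(k)},\cC_m^{(k-1)})$; hence $\mu_*(s_{\bm}\otimes s_{\bm^{\prime}})=0$.

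In case $\itm=\emptyset$, part 2) of Lemma \ref{Lem6.3} gives $\overline{S_{\bm}}\cdot\overline{S_{\bm^{\prime}}}=\overline{S_{\bm^{\prime\prime}}}$, and a dimension count using $\dim_{\R}S_{\bm}=\sum(2m_j-1)$ shows $\dim_{\R}S_{\bm^{\prime\prime}}=\dim_{\R}S_{\bm}+\dim_{\R}S_{\bm^{\prime}}$. Therefore $\mu_*(s_{\bm}\otimes s_{\bm^{\prime}})=\pm s_{\bm^{\prime\prime}}$, and it remains to identify the sign. The orientation on $E_{\bm}\times E_{\bm^{\prime}}$ is the product orientation of factors $E_{m_1},\ldots,E_{m_r},E_{m_1^{\prime}},\ldots,E_{m_{r^{\prime}}^{\prime}}$ in that order, while the positive orientation on $E_{\bm^{\prime\prime}}$ is given by the same factors reordered by increasing Schubert label. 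Since $\dim_{\R}E_{m_j}=2m_j-1$ is odd for every $j$, every adjacent transposition of two factors introduces a sign $(-1)^{\text{odd}\cdot\text{odd}}=-1$, and the total sign is therefore the signature of the permutation taking $(\bm,\bm^{\prime})$ to $\bm^{\prime\prime}$, i.e.\ $\gevar_{\bm,\bm^{\prime}}$.

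The main obstacle is the sign bookkeeping in the second case: one must verify that the ordering of factors used to define $s_{\bm^{\prime\prime}}$ via $\psi_{\bm^{\prime\prime}}$ matches the ordering produced by the sequence of Whitehead interchanges in the proof of Lemma \ref{Lem6.3}(2), so that the only discrepancy between the two orientations is the shuffle permutation. Once this is checked, the odd-dimensionality of every cell factor makes the sign computation immediate and forces the shuffle sign to equal $\gevar_{\bm,\bm^{\prime}}$.
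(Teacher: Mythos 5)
Your proposal is correct and follows essentially the same route as the paper: the paper derives the corollary from Lemma \ref{Lem6.3} via the cellular chain complex $C_k(\{X^{(k)}\})$, with part 3) forcing the product into a skeleton of dimension at most $\dim_{\R}S_{\bm}+\dim_{\R}S_{\bm^{\prime}}-2$ (hence zero in the relevant chain group) and part 2) giving $\overline{S_{\bm^{\prime\prime}}}$, the sign arising exactly as you say from interchanging odd-dimensional factors, each contributing $(-1)^{(2m_i-1)(2m_j-1)}=-1$. Your write-up simply makes explicit the Künneth and orientation bookkeeping that the paper leaves implicit.
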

The reason for the factor $\gevar_{\bm, \bm^{\prime}}$ is that each 
interchange of two factors $S_{(m_1)}$ and $S_{(m_2)}$ will change the 
orientation by a factor $(-1)^{(2m_1 -1)(2m_2 - 1)} = -1$. \par 
From the corollary we obtain a formula for the coproduct $\mu^*$ in terms 
of the (Kronecker) dual basis $\{e_{\bm}\}$ in cohomology to Schubert basis 
for homology $\{s_{\bm}\}$.
\begin{equation}
\label{Eqn6.9} 
\mu^*(e_{\bm})\,\, = \,\, \sum (-1)^{\deg(e_{\bm^{\prime}}) 
\deg(e_{\bm^{\prime\prime}})} \gevar_{\bm^{\prime}, 
\bm^{\prime\prime}}\cdot e_{\bm^{\prime}} \otimes 
e_{\bm^{\prime\prime}} \, ,
\end{equation}
where the sum is over all disjoint $\bm^{\prime}$ and $\bm^{\prime\prime}$ 
whose union in increasing order gives $\bm$ (and the terms $(-
1)^{\deg(e_{\bm^{\prime}}) \deg(e_{\bm^{\prime\prime}})}$ arise from 
the property $(\varphi \otimes \psi)(\gs \otimes \nu) = (-1)^{\deg(\varphi) \deg(\psi)} 
\varphi(\gs) \psi(\nu)$).  Since $S_{\bm}$ is a product of odd 
dimensional cells, $\deg(e_{\bm^{\prime}}) (= \dim_{\R} S_{\bm}) \equiv 
\ell(\bm)\, \mod 2$ and the sign in \eqref{Eqn6.9} equals $(-
1)^{\ell(\bm^{\prime}) \ell(\bm^{\prime\prime})}$.  Also, note the sum 
includes the empty symbol which denotes the Schubert cell consisting of just 
$I_n$.  In the case of the simple Schubert symbol $(m_1)$ we obtain
$$ \mu^*(e_{(m_1)})\,\, = \,\, e_{(m_1)} \otimes 1 \, + \, 1 \otimes 
e_{(m_1)} \, .$$
Hence, all of the $e_{(m_1)}$ are independent primitive classes.  
Then there is the following relation between the generators of $H^*(SU_n)$ 
and the Schubert classes.
\begin{Thm}
\label{Thm6.9}
$H^*(SU_n)$ is a free exterior algebra with generators $e_{(m)}$ of degrees 
$2m-1$, for $m = 2, \dots, n$.  Moreover the Kronecker dual to $s_{\bm}$ for 
$\bm = (m_1, m_2, \dots , m_r)$ is $e_{\bm} = (-
1)^{\gb(\bm)} e_{(m_1)} e_{(m_2)} \dots e_{(m_r)}$.  
where $\gb(\bm) = \binom{\ell(\bm)}{2}$ (where we denote $\binom{1}{2} = 
0$).  
\end{Thm}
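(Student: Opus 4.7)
The plan is to exploit the Hopf-algebra structure on $H^*(SU_n;\Z)$ together with the coproduct formula (6.9) and the Pontryagin-product description in Corollary 6.7. The first assertion, that $H^*(SU_n;\Z)$ is a free exterior algebra on the classes $e_{(m)}$ for $m=2,\dots,n$, is essentially the content of Theorem 6.1 already established in the excerpt: the inductive argument on $n$ using the inclusion $SU_{n-1}\hookrightarrow SU_n$ shows that the simple Kronecker duals generate the cohomology as an exterior algebra, which by rank matches $\Lambda^{*}\Z\langle s_{3},\dots,s_{2n-1}\rangle$. Moreover, setting $\bm=(m)$ in (6.9) yields $\mu^{*}(e_{(m)})=e_{(m)}\otimes 1+1\otimes e_{(m)}$, so each $e_{(m)}$ is $\mu^{*}$-primitive; dually, by the standard duality between primitives of a connected, torsion-free, graded-commutative Hopf algebra and indecomposables of its Kronecker dual, each $s_{(m)}$ is $\Delta_{*}$-primitive, i.e.\ $\Delta_{*}s_{(m)}=s_{(m)}\otimes 1+1\otimes s_{(m)}$.

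For the product formula, I would directly compute the Kronecker pairing $\langle e_{(m_{1})}\cdots e_{(m_{r})},\,s_{\bm'}\rangle$ for every Schubert class $s_{\bm'}$ of the correct degree and show it equals $(-1)^{\binom{r}{2}}\delta_{\bm,\bm'}$; since $\{s_{\bm'}\}$ is a $\Z$-basis of $H_{*}$, this forces $e_{(m_{1})}\cdots e_{(m_{r})}=(-1)^{\binom{r}{2}}e_{\bm}$. By the cup--diagonal adjunction,
\[
 \langle e_{(m_{1})}\cdots e_{(m_{r})},\,s_{\bm'}\rangle=\langle e_{(m_{1})}\otimes\cdots\otimes e_{(m_{r})},\,\Delta^{(r)}_{*}s_{\bm'}\rangle,
\]
where $\Delta^{(r)}\colon SU_{n}\to SU_{n}^{r}$ is the iterated diagonal. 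Writing $s_{\bm'}=s_{(m'_{1})}\star\cdots\star s_{(m'_{s})}$ by Corollary 6.7, and applying the Hopf compatibility $\Delta\circ\mu=(\mu\otimes\mu)\circ(\id\otimes T\otimes\id)\circ(\Delta\otimes\Delta)$ iteratively together with the primitivity of each $s_{(m'_{j})}$, I would expand
\[
 \Delta^{(r)}_{*}s_{\bm'}=\prod_{j=1}^{s}\Bigl(\sum_{i=1}^{r}1^{\otimes(i-1)}\otimes s_{(m'_{j})}\otimes 1^{\otimes(r-i)}\Bigr),
\]
the product being formed in $H_{*}(SU_{n})^{\otimes r}$ with componentwise Pontryagin product and Koszul signs.

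Pairing against $e_{(m_{1})}\otimes\cdots\otimes e_{(m_{r})}$, each tensor slot $i$ must carry a nonzero multiple of the simple class $s_{(m_{i})}$: indeed $e_{(m_{i})}$ annihilates both the unit and any Pontryagin product of two or more distinct simple Schubert classes (which have Schubert symbol of length $\geq 2$ and so are Kronecker-orthogonal to $e_{(m_{i})}$). Consequently only a bijective assignment $\phi\colon[s]\to[r]$ with $m'_{\phi^{-1}(i)}=m_{i}$ survives; since $\bm$ and $\bm'$ are strictly increasing of the same length, this forces $\bm=\bm'$ and $\phi=\id$. A brief Koszul check shows the surviving term is $s_{(m_{1})}\otimes\cdots\otimes s_{(m_{r})}$ with coefficient $+1$, and the iterated pairing rule $(\varphi\otimes\psi)(\sigma\otimes\nu)=(-1)^{|\psi||\sigma|}\varphi(\sigma)\psi(\nu)$ then gives
\[
 \langle e_{(m_{1})}\cdots e_{(m_{r})},\,s_{\bm}\rangle=(-1)^{\sum_{i<j}(2m_{i}-1)(2m_{j}-1)}\prod_{i}\langle e_{(m_{i})},s_{(m_{i})}\rangle=(-1)^{\binom{r}{2}},
\]
since every $2m_{j}-1$ is odd and there are $\binom{r}{2}$ ordered pairs $i<j$, establishing the formula.

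The main technical hurdle will be the disciplined Koszul-sign bookkeeping, both inside the graded tensor power $H_{*}(SU_{n})^{\otimes r}$ when expanding $\prod_{j}\Delta^{(r)}_{*}s_{(m'_{j})}$, and in the iterated Kronecker evaluation. The subsidiary $\Delta_{*}$-primitivity of each $s_{(m)}$ rests on the standard duality between primitives and indecomposables for a connected, torsion-free, graded-commutative Hopf algebra over $\Z$, which is applicable here because $H^{*}(SU_{n};\Z)$ is already known to have this structure from the first part of the theorem.
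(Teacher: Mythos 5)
Your route to the product formula is genuinely different from the paper's. The paper stays entirely in cohomology: it shows each $e_{(m)}$ is $\mu^*$-primitive, invokes the Hopf--Samelson theorem to identify these as the exterior generators, and then proves $e_{\bm}=(-1)^{\ell(\bm^{\prime})}e_{(m_1)}e_{\bm^{\prime}}$ by induction, computing $\mu^*\bigl(e_{\bm}-(-1)^{\ell(\bm^{\prime})}e_{(m_1)}e_{\bm^{\prime}}\bigr)$ from \eqref{Eqn6.9} and multiplicativity, cancelling the four types of mixed terms, and concluding the difference must vanish since otherwise it would be an extra primitive. You instead dualize: you use the Pontryagin factorization $s_{\bm^{\prime}}=s_{(m^{\prime}_1)}\star\cdots\star s_{(m^{\prime}_s)}$ from Corollary \ref{Cor6.7}, the Hopf compatibility of $\Delta_*$ with $\mu_*$, and a direct evaluation of $\langle e_{(m_1)}\cdots e_{(m_r)},s_{\bm^{\prime}}\rangle$. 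Granting your inputs, the argument that only the order-preserving matching survives (each tensor slot must receive a length-one Schubert class, since $e_{(m_i)}$ kills the unit and every $s_{\bm^{\prime\prime}}$ with $\ell(\bm^{\prime\prime})\geq 2$) is correct, and the closed-form Koszul computation of the sign $(-1)^{\binom{r}{2}}$ is cleaner than the paper's term-by-term cancellation.

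There is, however, one step whose justification as written is circular: the $\Delta_*$-primitivity of $s_{(m)}$. The duality $P(A^*)\cong Q(A)^*$ only guarantees that the primitives of $H_{2m-1}(SU_n)$ form a rank-one subgroup, namely the functionals on $H^{2m-1}$ killing cup-product decomposables and normalized on $e_{(m)}$. To identify that primitive with $s_{(m)}$ you would need to know that the span of $\{e_{\bm^{\prime}}:\bm^{\prime}\neq(m)\}$ coincides with the decomposables, i.e.\ that $\langle e_{(k_1)}\cdots e_{(k_j)},s_{(m)}\rangle=0$ for $j\geq 2$ --- which is precisely an instance of the identity you are trying to establish. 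The primitivity is nonetheless true and has a standard non-circular proof: $s_{(m)}=\psi_{(m)\,*}\bigl[S\CP^{m-1}\bigr]$ is the image of a class from a suspension, the reduced diagonal of a suspension is null, so by naturality of the diagonal $\Delta_*s_{(m)}=s_{(m)}\otimes 1+1\otimes s_{(m)}$. With that substitution your argument goes through.
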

\begin{proof}
We already have established the first statement about the algebra 
generators in Theorem \ref{Thm6.1}.  We note that it also follows from the 
Hopf algebra structure.  Since the $e_{(m)}$ , for $m = 2, \dots, n$ are 
primitive generators of degree $2m-1$, and $H^*(SU_n)$ is a Hopf algebra 
which is a free exterior algebra on generators of degrees $2m-1$ for $m = 
2, \dots, n$, it follows by a theorem of Hopf-Samuelson that $H^*(SU_n)$ is 
the free exterior algebra generated by the primitive elements $e_{(m)}$ , 
for $m = 2, \dots, n$.  \par
We furthermore claim that the Kronecker dual to the Schubert class 
$s_{\bm}$ for $\bm = (m_1, m_2, \dots , m_r)$ is given by 
$(-1)^{\gb(\bm)} e_{(m_1)} e_{(m_2)} \dots e_{(m_r)}$, which will follow 
from $e_{\bm} = (-1)^{\ell(\bm^{\prime})} e_{(m_1)} e_{\bm^{\prime}}$  
for $\bm^{\prime} = (m_2, m_3, \dots m_r)$.  We prove this by induction 
on $r$.  It is already true for $r = 1$.  Next, consider the case of $\bm = 
(m_1, m_2)$; then $\gevar_{(m_1), (m_2)} = 1$, $\gevar_{(m_2), (m_1)} = 
-1$ and $(-1)^{\ell(m_1) \ell(m_2)} = -1$.  Then, from 
\eqref{Eqn6.9} 
\begin{equation}
\label{Eqn6.10} 
\mu^*(e_{(m_1, m_2)})\,\, = \,\, e_{(m_1, m_2)} \otimes 1 \,   -  
e_{(m_1)} \otimes e_{(m_2)}  \, + \, e_{(m_2)} \otimes e_{(m_1)} \, + \, 1 
\otimes e_{(m_1, m_2)}\, .
\end{equation}
Also, as $\mu^*$ is an algebra homomorphism, 
\begin{align}
\label{Eqn6.11} 
\mu^*(e_{(m_1)}\cdot e_{(m_2)})\,\, &= \,\,  \mu^*(e_{(m_1}) \cdot 
\mu^*(e_{(m_2)})     \notag \\     
\,\, &= \,\, \left( e_{(m_1)} \otimes 1 \, + \, 1 \otimes e_{(m_1)}\right) 
\cdot \left( e_{(m_2)} \otimes 1 \, + \, 1 \otimes e_{(m_2)}\right)  \notag 
\\ 
\,\, &= \,\,  e_{(m_1)}\cdot e_{(m_2)} \otimes 1 \, + \, e_{(m_1)} \otimes 
e_{(m_2)} \, - \, e_{(m_2)} \otimes e_{(m_1)}    \notag \\
& \qquad \qquad \qquad  \qquad \qquad \qquad \qquad \, + \,  1 \otimes 
e_{(m_1)}\cdot e_{(m_2)}\, , 
\end{align}
where the signs on the RHS result from both $ e_{(m_1)}$ and $e_{(m_2)}$ 
having odd degree.  Adding \eqref{Eqn6.11} and \eqref{Eqn6.10} we 
obtain
\begin{align}
\label{Eqn6.11b} 
\mu^*(e_{(m_1, m_2)} + e_{(m_1)}\cdot e_{(m_2)})\, \, &= \, \, \left( 
e_{(m_1, m_2)} + e_{(m_1)}\cdot e_{(m_2)}\right) \otimes 1 \, \, \, + \,  
\notag  \\
& \qquad \qquad \qquad  \qquad \qquad 1 \otimes \left( e_{(m_1, m_2)} + 
e_{(m_1)}\cdot e_{(m_2)}\right)\, . 
\end{align}
This implies that if $e_{(m_1, m_2)} + e_{(m_1)}\cdot e_{(m_2)} \neq 0$, 
then it is a primitive element independent from the other primitive elements 
$e_{(m)}$.  This contradicts the Hopf-Samuelson theorem.  Thus, 
 $e_{(m_1, m_2)} = - e_{(m_1)}\cdot e_{(m_2)}$.  \par
Suppose by induction the result holds for $k < r$.  
Then, for
$\bm = (m_1, \dots , m_r)$, let $\bm^{\prime} = (m_2, \dots , m_r)$.  
First, by \eqref{Eqn6.9} we have  
\begin{equation}
\label{Eqn6.12a} 
\mu^*(e_{\bm})\,\, = \,\, e_{\bm} \otimes 1 \, + \, 1 \otimes e_{\bm}  \, 
+ \,     \sum (-1)^{\ell(\bm^{\prime}) \ell(\bm^{\prime\prime})} 
\gevar_{\bm^{\prime}, \bm^{\prime\prime}}\cdot e_{\bm^{\prime}} 
\otimes e_{\bm^{\prime\prime}} \, ,
\end{equation}
where the sum is over all $\bm^{\prime} = (m_1^{\prime}, m_2^{\prime}, 
\dots , m_k^{\prime})$ and $\bm^{\prime\prime} = (m_1^{\prime\prime}, 
m_2^{\prime\prime}, \dots , m_{k^{\prime}}^{\prime\prime})$ which are 
both nonempty, disjoint, and whose union in increasing order is $\bm$.  
Then, by induction we obtain
\begin{align}
\label{Eqn6.12}
 \mu^*(e_{(m_1)}\cdot e_{\bm^{\prime}}) \,\, &= \,\, 
\mu^*(e_{(m_1})\cdot \mu^*(e_{\bm^{\prime}})  \notag  \\
\,\, &= \,\, \left( e_{(m_1)} \otimes 1 \, + \, 1 \otimes e_{(m_1)}\right) 
\cdot \left( e_{\bm^{\prime}} \otimes 1 \, + \, 1 \otimes 
e_{\bm^{\prime}} + \right.  \notag  \\ 
&  \qquad  \left. \sum (-1)^{\ell(\bm^{\prime\prime}) 
\ell(\bm^{\prime\prime\prime})} \gevar_{\bm^{\prime\prime}, 
\bm^{\prime\prime\prime}}\cdot e_{\bm^{\prime\prime}} \otimes 
e_{\bm^{\prime\prime\prime}}\right)  
\end{align}
where the sum is over $\bm^{\prime\prime}$ and 
$\bm^{\prime\prime\prime}$ which are nonempty, disjoint and whose union 
in increasing order is $\bm^{\prime}$.  
In the sum on the RHS of \eqref{Eqn6.12a}, we have in addition to the terms 
$e_{\bm} \otimes 1$ and  $1 \otimes  e_{\bm}$ the four following types of terms : \flushpar
{\it Four Types of Terms in \eqref{Eqn6.12a}}: \hfill
\par
\begin{itemize}
\item[i)]
$ (-1)^{\ell(\bm^{\prime})} \gevar_{(m_1), \bm^{\prime}}\cdot e_{(m_1)} 
\otimes e_{\bm^{\prime}}\,  = \, (-1)^{\ell(\bm^{\prime})} e_{(m_1)} 
\otimes e_{\bm^{\prime}}$
\item[ii)]
$(-1)^{\ell(\bm^{\prime})} \gevar_{\bm^{\prime}, (m_1)}\cdot 
e_{\bm^{\prime}} \otimes e_{(m_1)}\, = \, e_{\bm^{\prime}} \otimes 
e_{(m_1)}$
\item[iii)]
$(-1)^{\ell(\bm^{\prime\prime})\ell(\bm^{\prime\prime\prime})} 
\gevar_{\bm^{\prime\prime}, \bm^{\prime\prime\prime}}\cdot 
e_{\bm^{\prime\prime}} \otimes e_{\bm^{\prime\prime\prime}}$  \qquad 
\text{ with $m_1$ in } $\bm^{\prime\prime}$
\item[iv)] 
$ (-1)^{\ell(\bm^{\prime\prime})\ell(\bm^{\prime\prime\prime})}
 \gevar_{\bm^{\prime\prime}, \bm^{\prime\prime\prime}}\cdot  
e_{\bm^{\prime\prime}} \otimes e_{\bm^{\prime\prime\prime}}$ 
 \qquad \text{ with $m_1$ in } 
$\bm^{\prime\prime\prime}$
\end{itemize}
 For comparison, we have in addition to the terms 
$(e_{(m_1)} e_{\bm^{\prime}}) \otimes 1$ and  $1 \otimes (e_{(m_1)} 
e_{\bm^{\prime}})$ the corresponding terms from \eqref{Eqn6.12} which have the 
following types:
\flushpar
{\it Corresponding Four Types of Terms in \eqref{Eqn6.12}}: \hfill
\begin{itemize}
\item[i)]
$e_{(m_1)} \otimes e_{\bm^{\prime}}$
\item[ii)]
$(-1)^{\ell(\bm^{\prime})} e_{\bm^{\prime}} \otimes e_{(m_1)} $
\item[iii)]
$(-1)^{\ell(\bm^{\prime\prime})\ell(\bm^{\prime\prime\prime})} 
\gevar_{\bm^{\prime\prime}, \bm^{\prime\prime\prime}}\cdot (e_{(m_1)} 
e_{\bm^{\prime\prime}}) \otimes e_{\bm^{\prime\prime\prime}}$
\item[iv)] 
$(-1)^{\ell(\bm^{\prime\prime})} 
(-1)^{\ell(\bm^{\prime\prime})\ell(\bm^{\prime\prime\prime})}
 \gevar_{\bm^{\prime\prime}, \bm^{\prime\prime\prime}}\cdot  
e_{\bm^{\prime\prime}} \otimes (e_{(m_1)} 
e_{\bm^{\prime\prime\prime}})$
\end{itemize}
\par 
In the first two cases for \eqref{Eqn6.12}, we can view them as a 
decomposition of $\bm$ either as $(\{m_1\}, \bm^{\prime})$ or 
$(\bm^{\prime}, \{m_1\})$.  We see that the corresponding coefficients for 
i) and ii) for \eqref{Eqn6.12} and \eqref{Eqn6.12a} differ by a factor 
$(-1)^{\ell(\bm^{\prime})}$.  The corresponding terms in iii) and iv) for 
\eqref{Eqn6.12} can be viewed as a decomposition  either as $(\{m_1\} \cup 
\bm^{\prime\prime}, \bm^{\prime\prime\prime})$ or 
$(\bm^{\prime\prime}, \{m_1\} \cup \bm^{\prime\prime\prime})$.  The 
corresponding coefficients will also be shown to differ by the same factor 
$(-1)^{\ell(\bm^{\prime})}$.  \par
For example, for iv) let $\tilde{\bm}^{\prime\prime\prime} = \{m_1\} \cup 
\bm^{\prime\prime\prime}$.  Then, $\gevar_{\bm^{\prime\prime}, 
\tilde{\bm}^{\prime\prime\prime}} = (-1)^{\ell(\bm^{\prime\prime})} 
\gevar_{\bm^{\prime\prime}, \bm^{\prime\prime\prime}}$; 
$\ell(\tilde{\bm}^{\prime\prime\prime}) = \ell(\bm^{\prime\prime\prime}) 
+ 1$; and  by the induction hypothesis 
$e_{\tilde{\bm}^{\prime\prime\prime}} = (-
1)^{\ell(\bm^{\prime\prime\prime})} e_{(m_1)}\cdot 
e_{\bm^{\prime\prime\prime}}$.  Then, substituting these values in iv) for 
\eqref{Eqn6.12} yields 
\begin{align}
\label{6.13a}
 &(-1)^{\ell(\bm^{\prime\prime})} 
(-1)^{\ell(\bm^{\prime\prime})\ell(\bm^{\prime\prime\prime})}
 \gevar_{\bm^{\prime\prime}, \bm^{\prime\prime\prime}}\cdot  
e_{\bm^{\prime\prime}} \otimes (e_{(m_1)} 
e_{\bm^{\prime\prime\prime}}) \,\, = \,\,  \notag  \\ 
&(-1)^{\ell(\bm^{\prime\prime})} 
(-1)^{\ell(\bm^{\prime\prime})\ell(\tilde{\bm}^{\prime\prime\prime})} 
(-1)^{\ell(\bm^{\prime\prime})} (-1)^{\ell(\bm^{\prime\prime})} 
(-1)^{\ell(\bm^{\prime\prime\prime})} \gevar_{\bm^{\prime\prime}, 
\tilde{\bm}^{\prime\prime\prime}}\cdot e_{\bm^{\prime\prime}} \otimes 
e_{\tilde{\bm}^{\prime\prime\prime}}  \notag  \\ 
&= \,  
(-1)^{\ell(\bm^{\prime\prime})\ell(\tilde{\bm}^{\prime\prime\prime})} 
 (-1)^{\ell(\bm^{\prime\prime})} (-1)^{\ell(\bm^{\prime\prime\prime})} 
\gevar_{\bm^{\prime\prime}, \tilde{\bm}^{\prime\prime\prime}} \cdot 
e_{\bm^{\prime\prime}} \otimes e_{\tilde{\bm}^{\prime\prime\prime}}  
\notag  \\
&= \,   (-1)^{\ell(\bm^{\prime})} 
\left((-1)^{\ell(\bm^{\prime\prime}) 
\ell(\tilde{\bm}^{\prime\prime\prime})} 
\gevar_{\bm^{\prime\prime}, \tilde{\bm}^{\prime\prime\prime}} \cdot 
e_{\bm^{\prime\prime}} \otimes 
e_{\tilde{\bm}^{\prime\prime\prime}}\right)
\end{align}
A similar, but somewhat simpler, argument works for the terms iii).  
\par
Then, we proceed as in the previous case.  We compute $\mu^*(e_{\bm} - (-
1)^{\ell(\bm^{\prime})} e_{(m_1)} e_{\bm^{\prime}})$ from 
\eqref{Eqn6.12} and \eqref{Eqn6.12a} and by the above all terms of types i) 
- iv) cancel so we obtain
\begin{align}
\label{Eqn6.13b}
\mu^*(e_{\bm} - (-1)^{\ell(\bm^{\prime})} e_{(m_1)} e_{\bm^{\prime}}) 
\,\, &= \,\, (e_{\bm} - (-1)^{\ell(\bm^{\prime})} e_{(m_1)} 
e_{\bm^{\prime}}) \otimes 1 \,\, + \,\,  \notag   \\  
&\quad\quad\quad\quad\quad\quad 1 \otimes (e_{\bm} - (-
1)^{\ell(\bm^{\prime})} e_{(m_1)} e_{\bm^{\prime}})
\end{align}
This again implies that $e_{\bm} - (-1)^{\ell(\bm^{\prime})} e_{(m_1)} 
e_{\bm^{\prime}}$ is a primitive element if it is nonzero.  Hence, it is zero 
and so $e_{\bm} = (-1)^{\ell(\bm^{\prime})} e_{(m_1)} e_{\bm^{\prime}}$.  
Repeated inductive application of this implies that for $\bm = (m_1, m_2, 
\dots , m_r)$
$$e_{\bm} \,\, = \,\,  (-1)^{\gb(\bm)} e_{(m_1)}\cdot  e_{(m_2)} \cdots 
e_{(m_r)}\,  .$$
with $\gb(\bm) = 1 + 2 + \cdots + (r-1) = \binom{\ell(\bm)}{2}$.  
\end{proof}
\par
As a consequence we have determined the Poincar\'{e} duals to the Schubert 
classes.
\begin{Corollary}
\label{Cor6.13}
For each Schubert symbol $\bm = (m_1, m_2, \dots , m_r)$ let the ordered 
complement in $\{2, 3, \dots , n\}$ be denoted by $\bm^{\prime} = 
(m_1^{\prime}, m_2^{\prime}, \dots, m_{n-1-r}^{\prime})$.  
\begin{itemize}
\item[i)] The Poincar\'{e} dual to the Schubert class $\left[ 
\overline{S_{\bm}}\right]$ in $F_n^{c}$ and to the Schubert class $\left[ 
\overline{S_{\bm}}\cdot Sol_n \right]$ in $F_n$ is given by $$(-
1)^{(\gb(\bn)+\gb(\bm))}\gevar_{\bm, \bm^{\prime}}\, e_{(m_1^{\prime})}\cdot  e_{(m_2^{\prime})} \cdots e_{(m_{n-1- r}^{\prime})}$$ 
for $\bn = (2, 3, \dots , n)$.  
\item[ii)] For Schubert symbols $\bm$ and $\bm^{\prime}$ such that 
$\ell(\bm) + \ell(\bm^{\prime}) = n-1$, the intersection pairing satisfies
\begin{equation}
\label{Eqn6.13c}
\langle [\overline{S_{\bm}}], [\overline{S_{\bm^{\prime}}}]\rangle \,  = \, 
\begin{cases}
(-1)^{(\gb(\bn)+\gb(\bm)+\gb(\bm^{\prime}))}\gevar_{\bm, \bm^{\prime}} & \text{ if $\bm^{\prime}$ 
is the ordered }  \\
 & \text{ complement to $\bm$} , \\
0& \text{ otherwise } \, .
\end{cases}
\end{equation}
\end{itemize}

\end{Corollary}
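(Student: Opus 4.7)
The plan is to deduce both statements from a single cup-product calculation in $H^*(SU_n)$ combined with Poincar\'e duality on the compact oriented manifold $F_n^{c} = SU_n$, and then to transport the result to $F_n$ via Proposition \ref{Prop1.1}. Since the Schubert basis $\{s_\bm\}$ and its Kronecker dual $\{e_\bm\}$ have already been identified in Theorem \ref{Thm6.9}, both parts reduce to a careful sign computation.

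The central computation is the cup product $e_\bm \cup e_{\bm'}$ for Schubert symbols with $\ell(\bm)+\ell(\bm') = n-1$. Writing each $e_\bm$ as the signed monomial $(-1)^{\gb(\bm)} e_{(m_1)}\cdots e_{(m_r)}$ from Theorem \ref{Thm6.9}, the product becomes a signed monomial in the primitive generators $e_{(k)}$. If $\bm$ and $\bm'$ share an index, the monomial contains $e_{(k)}^2 = 0$ and the product vanishes; otherwise, $\bm \cup \bm' = \bn = (2,3,\dots,n)$ as unordered sets, and rearranging the concatenated sequence of adjacent odd-degree generators into the canonical increasing order introduces the sign of the sorting permutation, which is precisely $\gevar_{\bm,\bm'}$. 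Combining these three contributions gives
\begin{equation*}
e_\bm \cup e_{\bm'} \,=\, \gevar_{\bm,\bm'} (-1)^{\gb(\bm)+\gb(\bm')+\gb(\bn)}\, e_\bn,
\end{equation*}
and pairing against $[SU_n]=s_\bn$ (on which $e_\bn$ evaluates to $1$) yields $(-1)^{\gb(\bn)+\gb(\bm)+\gb(\bm')}\gevar_{\bm,\bm'}$, proving \eqref{Eqn6.13c} in (ii).

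For (i), Poincar\'e duality on $SU_n$ identifies $PD([\overline{S_\bm}])$ as the unique class $\alpha \in H^{n^2-1-\dim S_\bm}(SU_n)$ characterized by $\langle \alpha \cup e_{\bm''}, [SU_n]\rangle = \langle e_{\bm''}, s_\bm \rangle = \delta_{\bm,\bm''}$ for all $\bm''$. Expanding $\alpha = \sum c_{\bm'''} e_{\bm'''}$ in the Schubert cohomology basis and using the cup-product formula above, the only nonvanishing contribution comes from $\bm''' = \bm'$, the ordered complement of $\bm$ in $\bn$, and solving for $c_{\bm'}$ then converting back from $e_{\bm'}$ to the monomial $e_{(m'_1)}\cdots e_{(m'_{n-1-r})}$ via Theorem \ref{Thm6.9} produces the claimed formula. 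Finally, Proposition \ref{Prop1.1} gives the passage from $F_n^{c}$ to $F_n$: the inclusion $\iti: F_n^{c} \hookrightarrow F_n$ is a homotopy equivalence and the Poincar\'e duals of $\overline{S_\bm}$ and $\overline{S_\bm}\cdot\Sol_n$ agree under $\iti^*$.

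The main obstacle is sign-bookkeeping. Three independent sign sources intermingle: the factor $(-1)^{\gb(\bm)}$ relating $e_\bm$ to the monomial in primitive generators, the permutation sign $\gevar_{\bm,\bm'}$ produced by graded commutativity of the odd-degree generators during the merge into increasing order, and the standard degree-parity sign appearing in the definition of Poincar\'e duality via the cap product. Verifying that these conspire to give exactly $\gevar_{\bm,\bm'}$ (not its conjugate $\gevar_{\bm',\bm} = (-1)^{\ell(\bm)\ell(\bm')}\gevar_{\bm,\bm'}$) in the final formula is the only non-routine step; once this is checked, the remainder is an immediate translation of the Hopf-algebra structure of $H^*(SU_n)$ into a statement about the Schubert basis.
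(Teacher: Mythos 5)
Your proposal is correct and follows essentially the same route as the paper: both rest on the identification $e_{\bm} = (-1)^{\gb(\bm)} e_{(m_1)}\cdots e_{(m_r)}$ from Theorem \ref{Thm6.9}, the resulting cup-product computation $e_{\bm}\cup e_{\bm'} = (-1)^{\gb(\bm)+\gb(\bm')+\gb(\bn)}\gevar_{\bm,\bm'}e_{\bn}$ in the exterior algebra, and the homotopy equivalence $\iti_n : F_n^c \hookrightarrow F_n$ (via a fiber-square/transverse-intersection argument, i.e.\ Proposition \ref{Prop1.1}) to transfer the dual to $F_n$. The only cosmetic difference is that you derive (ii) first and then solve for the Poincar\'e dual in (i), whereas the paper verifies the formula in (i) directly against the characterization $e_{\bm}\cup\nu = (-1)^{\gb(\bn)}e_{(2)}\cdots e_{(n)}$ and then reads off (ii).
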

\begin{proof}
By Theorem \ref{Thm6.9}, the Kronecker dual to $\left[ \overline{S_{\bm}} 
\right]$ is given by $e_{\bm} = (-1)^{\gb(\bm)} e_{(m_1)}\cdot  e_{(m_2)} 
\cdots e_{(m_r)}$.  Also, the fundamental class for $\left[ SU_n \right]$ 
with orientation given by $\left[ \overline{S_{\bn}} \right]$ has Kronecker 
dual $(-1)^{\gb(\bn)} e_{(2)}\cdot  e_{(3)} \cdots e_{(n)}$.  Then, the 
Poincar\'{e} dual to $\left[ \overline{S_{\bm}} \right]$ is given by a 
cohomology class $\nu$ such that 
$e_{\bm} \cup \nu = (-1)^{\gb(\bn)} e_{(2)}\cdot  e_{(3)} \cdots e_{(n)}$.  
This is satisfied by 
$$\nu = (-1)^{(\gb(\bn)+\gb(\bm))}\gevar_{\bm, \bm^{\prime}}\, e_{(m_1^{\prime})}\cdot  
e_{(m_2^{\prime})} \cdots e_{(m_{n-1- r}^{\prime})}\, .$$
  \par
In the case of the Schubert class $\left[ \overline{S_{\bm}}\cdot Sol_n 
\right]$ in $F_n$, we note that $\overline{S_{\bm}}$ is the transverse 
intersection of $F_n^{c} = SU_n$ with $\overline{S_{\bm}}\cdot Sol_n$ in 
$F_n$ and that the inclusion $\iti_n : F_n^{c} \hookrightarrow F_n$ is a homotopy 
equivalence.  Hence, by a fiber square argument, the Poincar\'{e} dual in 
$H^*(F_n; \Z)$ to the fundamental class of $\overline{S_{\bm}}\cdot Sol_n$ for 
Borel-Moore homology, agrees via $\iti_n^*$ with that for the fundamental class of 
$\overline{S_{\bm}}$ in 
$H^*(F_n^{c}; \Z)$.  \par
The consequence for the intersection pairing follows from the above and
\begin{equation}
\label{Eqn6.13d}
\langle [\overline{S_{\bm}}], [\overline{S_{\bm^{\prime}}}] \rangle \, \, = \, 
\, \langle e_{\bm} \cup e_{\bm^{\prime}}, \left[ \overline{S_{\bn}} \right] 
\rangle 
\end{equation}
\end{proof}
\par
\subsection*{Milnor Fiber for the Variety of Singular 
$m \times m$-Skew-Symmetric Matrices} \hfill
\par
We second consider the case of the global Milnor fiber $F_m^{(sk)}$ for 
skew-symmetric matrices with $m = 2n$.  Then, the homology of $SU_{2n}/Sp_n$ 
can be computed from the algebraic complex with basis formed from the 
Schubert cells $S_{\bm}^{(sk)}$.  By a result of Cartan (see e.g. Mimura-Toda 
\cite[Theorem 6.7]{MT}) the homology of $SU_{2n}/Sp_n$ (which is 
isomorphic as a graded $\Z$-module to its cohomology) is given as a graded 
$\Z$-module by
\begin{equation}
\label{Eqn6.14a}
  H_*(SU_{2n}/Sp_n; \Z) \,\,  \simeq \,\, \gL^*\Z \langle s_5, s_9, \dots , 
s_{4n-3} \rangle \, . 
\end{equation}
where $s_{4j-3}$ has degree $4j-3$.  By the universal coefficient theorem 
this holds as well as a vector space over a field $\bk$ of characteristic zero.  
\begin{Thm}
\label{Thm6.14}
The homology $H_*(F_m^{(sk)\, c}; \Z)$ for $m = 2n$ has for a free 
$\Z$-basis the fundamental classes of the skew-symmetric Schubert cycles,  
$\iti_{m\, *}\circ \psi_{\bm\, *}^{(sk)}([\tilde{S}_{\bm}^{(sk)}]) = 
\psi_{\bm\, *}^{(sk)}(\tilde{S}_{\bm}^{(sk)}) = \overline{S_{\bm}^{(sk)}}$ 
as we vary over the Schubert decomposition of $\cC^{(sk)}_m \simeq 
SU_{2n}/Sp_n$.  Moreover, the Kronecker duals of the simple 
skew-symmetric Schubert cycles $\overline{S_{(m_1)}^{(sk)}}$ give 
homogeneous exterior algebra generators for the cohomology. \par 
This likewise extends to $H_*(F_m^{(sk)}; \Z)$ ($m = 2n$) for Borel-Moore 
homology with basis given by the fundamental classes of the global 
skew-symmetric Schubert cycles 
$Sol_m^T\cdot (\overline{S^{(sk)}_{\bm}}\cdot J_n)$ for $F_m^{(sk)}$.  
The Poincar\'{e} duals of these classes form a $\Z$-basis for the cohomology
$$  H^*(F_m^{(sk)}; \Z) \,\,  \simeq \,\, \gL^*\Z \langle e_5, e_9, \dots , 
e_{4n-3} \rangle \, . $$
\end{Thm}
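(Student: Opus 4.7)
The plan is to mimic the argument of Theorem \ref{Thm6.1} in the skew-symmetric setting: use the Schubert decomposition from Theorem \ref{Thm4.13} together with the known rank of $H_*(SU_{2n}/Sp_n;\Z)$ to identify the fundamental classes of the Schubert cycles as a basis, then use the tower structure from \S 2 to pick out the simple cycles as exterior generators, and finally transport the conclusion to the global Milnor fiber via Proposition \ref{Prop1.1} and Theorem \ref{5.15}.

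First, by Theorem \ref{Thm4.13} the skew-symmetric Schubert cells $S_{\bm}^{(sk)}$ for $\bm = (m_1,\ldots,m_k)$ with $1 < m_1 < \cdots < m_k \le n$ give a CW decomposition of $F_m^{(sk)\, c}$, with $\dim_{\R} S_{\bm}^{(sk)} = \sum_j(4m_j-3)$. The monomials $s_{4m_1-3}\cdots s_{4m_k-3}$ in the exterior algebra \eqref{Eqn6.14a} are indexed by exactly the same sequences and have the same total degree. Hence in each degree the number of cells equals the rank of the homology, and a standard rank argument forces the cellular differential to vanish, so each closure $\overline{S_{\bm}^{(sk)}}$ represents a nonzero class in $H_*(F_m^{(sk)\, c};\Z)$. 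Since $\psi_{\bm}^{(sk)}$ is a homeomorphism on the open top-dimensional cell $E_{\bm}^{(sk)}$ of $\tilde S_{\bm}^{(sk)}$, this class is realized as $\psi_{\bm\,*}^{(sk)}[\tilde S_{\bm}^{(sk)}]$ in the sense of Borel-Moore fundamental classes, and together these classes form a free $\Z$-basis for the homology.

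Second, to identify the Kronecker duals of the simple Schubert cycles $\overline{S_{(m_1)}^{(sk)}}$ as exterior algebra generators of the cohomology, I would induct on $n$ using the tower $j_n: SU_{2n}/Sp_n \hookrightarrow SU_{2(n+1)}/Sp_{n+1}$ recorded at the end of \S 2. This inclusion identifies the cell of symbol $\bm$ on the smaller space with the cell of the same symbol on the larger whenever $m_k \le n$, while the new cells with $m_k = n+1$ lie off the image. Running the argument of Theorem \ref{Thm6.1} verbatim, the duals of the simple cycles with $m_1 \le n$ restrict via $j_n^*$ to the corresponding duals on $SU_{2n}/Sp_n$, which by induction generate $\gL^*\Z\langle e_5,\ldots,e_{4n-3}\rangle$; meanwhile the unique simple cycle of symbol $(n+1)$ has degree $4(n+1)-3 = 4n+1$, and a rank count on $\ker j_n^*$ forces its Kronecker dual to supply the missing exterior generator $e_{4n+1}$.

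Third, the passage to $F_m^{(sk)}$ is an immediate application of Proposition \ref{Prop1.1} and Theorem \ref{5.15}: the global skew-symmetric Schubert cells $Sol_m^T\cdot(S_{\bm}^{(sk)}\cdot J_n)$ stratify $F_m^{(sk)}$, and their closures carry Borel-Moore fundamental classes whose Poincar\'{e} duals pull back via the homotopy equivalence $\iti_m$ to the Poincar\'{e} duals of $\overline{S_{\bm}^{(sk)}}$ in $F_m^{(sk)\, c}$; this yields the stated $\Z$-basis in Borel-Moore homology and the exterior algebra structure on $H^*(F_m^{(sk)};\Z)$ with generators in degrees $4j-3$, $j = 2,\ldots,n$. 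The main obstacle is the induction step in paragraph two: one must check that $j_n$ behaves cleanly on Schubert cells (the $\mbH$-line condition in Theorem \ref{Thm4.13} is preserved because pseudo-rotations about lines in $\C^{2n} \subset \C^{2n+2}$ extend as the identity on $\{e_{2n+1}, e_{2n+2}\}$), and that the kernel of $j_n^*$ in the relevant degree is one-dimensional so that a single new primitive class can be extracted, exactly as in the general case.
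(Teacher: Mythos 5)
Your proposal is correct and follows essentially the same route as the paper: the cell-count against the known exterior algebra $\gL^*\Z\langle s_5,\dots,s_{4n-3}\rangle$ to force the cellular differential to vanish and extract the basis of Schubert cycles, the tower induction copied from Theorem \ref{Thm6.1} to identify the Kronecker duals of the simple cycles as exterior generators, and the homotopy equivalence $\iti_m$ together with Proposition \ref{Prop1.1} and Theorem \ref{5.15} to transfer everything to $F_m^{(sk)}$. The only difference is that you spell out details (vanishing of the differential, compatibility of the inclusion $j_n$ with the Schubert cells) that the paper leaves implicit by citing the argument of Theorem \ref{Thm6.1}.
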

\begin{proof}
The proof follows the same lines as that of Theorem \ref{Thm6.1}. 
Then, a count from \eqref{Eqn6.14a} shows that $H_{q}(SU_{2n}/Sp_n; \Z)$ 
is spanned by $s_{4m_1-3}\cdot s_{4m_2 - 3}\cdots s_{4m_k-3}$ where $1 
< m_1 < m_2 < \cdots < m_k \leq n$ and  $q = \sum_{j = 1}^{k} (4m_j -3)$.  
By Theorem \ref{5.15} this equals the number of 
skew-symmetric Schubert cells $S_{\bm}^{(sk)}$ of real dimension $q$.  
Thus, each $\psi_{\bm}^{(sk)}(\tilde{S}_{\bm}^{(sk)}) = 
\overline{S_{\bm}^{(sk)}}$ defines a $\Z$-homology class of dimension 
$\dim_{\R} S_{\bm}^{(sk)}$.  Together they form a basis for 
$H_{q}(SU_{2n}/Sp_n; \Z)$.  That the Kronecker duals of the simple 
Schubert cycles $S_{(m_1)}^{(sk)}$ give algebra generators for the 
cohomology follows by the same argument used in Theorem \ref{Thm6.1}.
\par  
As $\tilde{S}_{\bm}^{(sk)}$ has a top homology class in 
$H_{q}(\tilde{S}_{\bm}^{(sk)}; \Z)$ for $q = \dim_{\R}(\tilde{S}_{\bm}^{(sk)})$, 
we can view it as a fundamental class for $\tilde{S}_{\bm}^{(sk)}$ for 
Borel-Moore homology.  As $F_m^{(sk)\,c} \simeq 
\cC_m^{(sk)} \simeq SU_{2n}/Sp_n$ by multiplication by $J_n$ and the 
inclusion $\iti_m : F_m^{(sk)\, c} \hookrightarrow F_m^{(sk)}$ is a homotopy 
equivalence, we conclude that these classes form a $\Z$-basis for the 
cohomology via $H^{*}(F_m^{(sk)}; \Z) \simeq H^{*}(F_m^{(sk)\, c}; \Z)$.  Their 
Poincar\'{e} duals then form a $\Z$-basis for the the Borel-Moore homology.
\end{proof}
\par
Again there is the question of explicitly identifying the Kronecker dual of the 
fundamental class $\psi_{\bm\, *}^{(sk)}([\tilde{S}_{\bm}^{(sk)}])$ with a 
cohomology class as a polynomial in the cohomology algebra generators 
$e_{4j-3}$,  $j = 2, \dots , n$, and as a consequence explicitly identifying the 
generators for the cohomology algebra.  We shall comment on this 
after next considering the symmetric case.  
\flushpar
\subsection*{Milnor Fiber for the Variety of Singular 
$m \times m$-Symmetric Matrices} \hfill
\par
We next consider the case of $F_m^{(sy)}$.  Again the line of reasoning will be 
similar to the two preceding cases with the crucial difference that the 
(co)homology has two different forms for coefficients $\Z/2\Z$ or a field of 
characteristic zero. 
There is the compact model $F_n^{(sy)\, c} \simeq \cC_n^{(sy)} \simeq 
SU_n/SO_n$ for $F_n^{(sy)}$.  
Then, the homology of $SU_n/SO_n$ can be computed from the algebraic 
complex with basis formed from the Schubert cells $S_{\bm}^{(sy)}$.  By a 
result of Borel and Hopf, see e.g. \cite{Bo} and see \cite{KM}, the homology 
of $SU_n/SO_n$ with $\Z/2\Z$-coefficients (which is isomorphic as a graded 
$\Z/2\Z$-vector space to its cohomology) is given as a graded vector space 
over the field $\Z/2\Z$  
$$  H_*(SU_n/SO_n; \Z/2\Z) \,\,  \simeq \,\, \gL^*\Z/2\Z \langle s_2, s_3, 
\dots , s_{n} \rangle \, . $$
where $s_{j}$ has degree $j$.  
A count shows that 
$$ \dim_{\Z/2\Z} H_*(SU_n/SO_n; \Z/2\Z) \, = \,  2^{n-1}\, .$$
 This is the same as the number of Schubert cells $S_{\bm}^{(sy)}$, for 
$1 < m_1 < \dots < m_k \leq n$ in the cell decomposition of $SU_n/SO_n$.  
Thus, the Schubert cycles $\overline{S_{\bm}^{(sy)}}$, which are 
$\mod 2$-homology cycles, give a $\Z/2\Z$-basis for the homology 
$H_*(SU_n/SO_n; \Z/2\Z)$.  
In particular the $\mod 2$-homology cycles $\overline{S_{\bm}^{(sy)}}$ for which $|\bm | = q$ give a $\Z/2\Z$-basis for $H_{q}(SU_n/SO_n; \Z/2\Z)$ for each $q \geq 0$.  \par  
Thus, we conclude by an analogous argument to that used in the preceding 
two cases
\begin{Thm}
\label{Thm6.15}
The homology $H_*(F_n^{(sy)\, c}; \Z/2\Z)$ has for a $\Z/2\Z$-basis the 
$\Z/2\Z$ fundamental classes of the symmetric Schubert cycles $\left[ 
\overline{S_{\bm}^{(sy)}} \right]$ as we vary over the Schubert 
decomposition of $\cC_n^{(sy)} \simeq SU_n/SO_n$ for all symmetric 
Schubert symbols $\bm^{(sy)} = (m_1, \dots , m_k)$ with $1 < m_1 < \dots 
< m_k \leq n$.  Moreover, the Kronecker duals of the simple symmetric 
Schubert cycles $\overline{S^{(sy)}_{(m_1)}}$ are algebra generators for the 
exterior cohomology algebra with $\Z/2\Z$-coefficients.  \par
This extends to $H_*(F_n^{(sy)}; \Z/2\Z)$ with $\Z/2\Z$-basis given by the 
Borel-Moore $\mod 2$-cycles given by the global symmetric Schubert cycles 
$\left[ Sol_m^T\cdot (S^{(sy)}_{\bm}) \right]$ for $S^{(sy)}_{\bm}$ over the 
symmetric Schubert symbols $\bm^{(sy)}$.  
The Poincar\'{e} duals of these classes form a $\Z/2\Z$-basis for the 
cohomology.  
$$  H^*(F_m^{(sy)}; \Z/2\Z) \,\,  \simeq \,\, \gL^*\Z/2\Z \langle e_2, e_3, 
\dots , e_{n} \rangle \, . $$
\end{Thm}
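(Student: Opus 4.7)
The plan is to proceed in close parallel with the proofs of Theorem \ref{Thm6.1} and Theorem \ref{Thm6.14}, using the cellular complex $C_k(\{X^{(k)}\}) = H_k(X^{(k)}, X^{(k-1)}; \Z/2\Z)$ determined by the Schubert decomposition of Theorem \ref{Thm4.5}. The crucial fact in the symmetric case is the dimension count: since $H_*(SU_n/SO_n; \Z/2\Z)$ is the exterior algebra on generators $s_2, s_3, \dots , s_n$, its total dimension as a $\Z/2\Z$-vector space is $2^{n-1}$, and this equals the number of increasing sequences $\bm = (m_1, \dots, m_k)$ with $1 < m_1 < \cdots < m_k \leq n$ parametrizing the Schubert cells $S_{\bm}^{(sy)}$. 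More refinedly, for each fixed degree $q$, the number of such $\bm$ with $|\bm| = q$ equals the rank of the monomials $s_{m_1}s_{m_2}\cdots s_{m_k}$ of degree $q$, so matches $\dim_{\Z/2\Z} H_q(SU_n/SO_n; \Z/2\Z)$.

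Since the number of Schubert cells in each dimension equals the rank of $H_q$, and the cellular complex has $\rk C_q = \dim_{\Z/2\Z} H_q$, it follows that all differentials in the cellular chain complex must vanish (working over $\Z/2\Z$ avoids orientation issues, so the $\tilde{S}_{\bm}^{(sy)}$ need only be $\mod 2$ fundamental cycles, which they automatically are as closures of cells in a finite CW-structure). Thus the $\Z/2\Z$ fundamental classes $[\overline{S_{\bm}^{(sy)}}] = \psi_{\bm\, *}^{(sy)}[\tilde{S}_{\bm}^{(sy)}]$ form a $\Z/2\Z$-basis of $H_*(SU_n/SO_n; \Z/2\Z)$.

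To identify the Kronecker duals of the simple Schubert cycles $\overline{S_{(m_1)}^{(sy)}}$ as exterior algebra generators, I would argue by induction on $n$ using the tower structure noted after Remark \ref{Rem2.3}. The inclusion $i_{n-1} : SU_{n-1}/SO_{n-1} \hookrightarrow SU_n/SO_n$ respects the Schubert decomposition: any $S_{\bm}^{(sy)}$ with $m_k < n$ lies in the image, while those with $m_k = n$ lie in its complement. By the induction hypothesis, the Kronecker duals to the simple $S_{(m_1)}^{(sy)}$ with $m_1 < n$ restrict via $i_{n-1}^*$ to algebra generators of $H^*(SU_{n-1}/SO_{n-1}; \Z/2\Z)$. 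A counting argument then shows that the kernel of $i_{n-1}^*$ is spanned (as a module over the image) by the Kronecker duals to Schubert classes with $m_k = n$, and the unique simple such class in the lowest new degree $n$ is $\overline{S_{(n)}^{(sy)}}$, whose dual provides the needed new algebra generator $e_n$.

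Finally, the extension to the global Milnor fiber $F_n^{(sy)}$ follows directly from Proposition \ref{Prop1.1} combined with Theorem \ref{Thm5.14}: the Iwasawa (Gram-Schmidt) decomposition identifies $F_n^{(sy)} \simeq \cC_n^{(sy)} \times Sol_n^T$ via the action $(E, \tilde{A}) \mapsto E\cdot\tilde{A}\cdot E^T$, so each $Sol_n^T\cdot (S_{\bm}^{(sy)})$ is a Euclidean cell and its closure carries a Borel-Moore $\mod 2$ fundamental class whose Poincar\'{e} dual pulls back under the homotopy equivalence $\iti_n : F_n^{(sy)\, c} \hookrightarrow F_n^{(sy)}$ to that of $\overline{S_{\bm}^{(sy)}}$ by the fiber-square argument of Proposition \ref{Prop1.1}. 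The cohomology identification then follows via Poincar\'{e} duality combined with the preceding homology basis. The main obstacle — and the reason this theorem sits in the $\Z/2\Z$ setting rather than the integral one — is that for a general symmetric Schubert symbol $\bm^{(sy)}$, the space $\tilde{S}_{\bm}^{(sy)}$ is a product of cones, not a closed manifold, and there is no integral top-dimensional fundamental class; working $\mod 2$ sidesteps this obstruction and makes the cellular count clean.
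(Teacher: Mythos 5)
Your proposal is correct and follows essentially the same route as the paper: the paper establishes the homology basis by exactly the same cellular counting argument (matching the number of symmetric Schubert cells of each dimension against $\dim_{\Z/2\Z}H_q$ of the exterior algebra on $s_2,\dots,s_n$, so that all cellular differentials over $\Z/2\Z$ vanish), obtains the algebra generators "by an analogous argument to the preceding two cases," i.e.\ the same tower-structure induction you describe from Theorem \ref{Thm6.1}, and passes to $F_n^{(sy)}$ via Proposition \ref{Prop1.1} and Theorem \ref{Thm5.14}. Your closing remark about the absence of integral fundamental classes for the products of cones $\tilde{S}_{\bm}^{(sy)}$ matches the paper's own discussion immediately following the theorem.
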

\par 
There are several points to be made regarding this result and that for 
skew-symmetric matrices.  \par
First, unlike the cases of $SU_n$ and $SU_{2n}/Sp_n$, the closure of the 
Schubert cells are not the images of Borel-Moore homology classes of 
singular manifolds.  As mentioned earlier, if we consider instead the quotient 
space $F_m^{(sy)\, c}/(F_m^{(sy)\, c})^{(q-1)}$, and $|\bm | = q$, then the 
composition of the map  
$$ \tilde{\psi}_{\bm}^{(sy)} : \prod_{i = 1}^{k} (C\RP^{m_i-1})\,\,  
\longrightarrow \,\, SU_n/SO_n \simeq F_m^{(sy)\, c} $$
with the quotient map 
$pr_{q} : F_m^{(sy)\, c} \to F_m^{(sy)\, c}/(F_m^{(sy)\,c})^{(q-1)}$ factors through 
to give a map
$$pr_{q}\circ \tilde{\psi}_{\bm}^{(sy)}: \prod_{i = 1}^{k} S\RP^{m_i-1} \,\,  
\longrightarrow \,\, F_m^{(sy)\, c}/(F_m^{(sy)\, c})^{(q-1)}\, . $$
As $pr_{q} : (F_m^{(sy)\, c}, (F_m^{(sy)\, c})^{(q-1)}) \to (F_m^{(sy)\, 
c}/(F_m^{(sy)\, c})^{(q-1)}, *)$, for $*$ the point representing $(F_m^{(sy)\, 
c})^{(q-1)}$ in the quotient, is a relative homeomorphism,
$$pr_{q\, *} : H_{q}(F_m^{(sy)\, c}, (F_m^{(sy)\, c})^{(q-1)}; \Z/2\Z) \,\, \simeq 
\,\,  H_{q}(F_m^{(sy)\, c}/(F_m^{(sy)\, c})^{(q-1)}, *; \Z/2\Z)\, . $$
Then, the closure $\overline{S_{\bm}^{(sy)}}$ corresponds via the 
isomorphism to the image of the fundamental class of $\prod_{i = 1}^{k} 
(S\RP^{m_i-1})$ under $pr_{q\, *}\circ \tilde{\psi}_{\bm\, *}^{(sy)}$. 
\par
Moreover, as noted earlier for the simple Schubert symbol $(m_1)$, there is 
a factored map 
$ \tilde{\psi}_{(m_1)}^{(sy)} : S\RP^{m_i-1} \to SU_n/SO_n \simeq 
F_m^{(sy)\, c} $ with image $\overline{S_{(m_1)}^{(sy)}}$, giving it a 
Borel-Moore fundamental homology class for $\Z/2\Z$-coefficients.  
\par
However, for cohomology with rational coefficients, see e.g. \cite[Chap. 3, 
Thm 6.7 (2)]{MT} or Table 1 in \cite{D3}, many of these Schubert cells do 
not contribute homology classes.  This is similar to the situation for oriented 
Grassmannians for $\Z/2\Z$ versus rational coefficients.  This relation 
extends further.  Over $SU_n/SO_n$ is a natural $n$-dimensional real 
oriented vector bundle  $E_n = (SU_n \times_{SO_n} \R^n)$ where $\R^n$ 
has the natural representation of $SO_n$.  This bundle can be viewed 
geometrically as the set of oriented real subspaces $V \subset \C^n$ with 
$\dim_{\R} V = n$ such that $\C\langle V\rangle \,= \C^n$.  Then, by e.g. 
\cite[Chap. 3, Thm 6.7 (3)]{MT} the cohomology of $SU_n/SO_n$, already 
quoted in Theorem \ref{Thm6.15} has $e_j = w_j(E_n)$, the $j$-th 
Stiefel-Whitney class.  This bundle pulls-back by the homotopy equivalence 
$SU_n/SO_n \simeq F_n^{(sy)\, c} \simeq F_n^{(sy)}$ to give an $n$-dimensional 
real oriented vector bundle, which we denote by $\tilde{E}_n$ and then
$$ H^*(F_n^{(sy)}; \Z/2\Z) \,\, \simeq  \gL^* \Z/2\Z < w_2, w_3, \dots , 
w_n >  $$
where $w_j = w_j(\tilde{E}_n)$ for each $j =  2, 3, \dots , n$.
We will see in the next section that this algebra naturally pulls back to a 
characteristic subalgebra of Milnor fibers for general symmetric matrix 
singularities generated by the Stiefel-Whitney classes of the pull-back of 
$\tilde{E}_n$ to the Milnor fiber.
\par
Although both 
$$H^*(F_n^{(sy)}; \Z/2\Z) \simeq H^*(SU_n/SO_n; \Z/2\Z)\quad \text{ and } 
\quad H^*(F_{2n}^{(sk)}; \Z) \simeq H^*(SU_{2n}/Sp_n; \Z)$$ are exterior 
algebras, neither is a Hopf algebra.  Hence, the full argument given for 
$H^*(F_{n}; \Z)$ for the relation between the cohomology and the Schubert 
decomposition cannot be given using Hopf algebra methods.  However, it does 
suggest the following conjecture is true and constitutes work in progress.  
\par
\vspace{1ex} 
\flushpar
{\bf Conjecture:}  For both $F_n^{(sk)\, c}$ and $F_n^{(sy)\, c}$, the Kronecker duals 
to the Schubert classes $S_{(\bm)}^{(sk)}$, resp. $S_{(\bm)}^{(sy)}$ for 
Schubert symbols $\bm^{(sk)}$, or $\bm^{(sy)} = (m_1, m_2, \dots , m_r)$ 
are given up to sign by $e_{(m_1)} \cdot e_{(m_2)} \cdots e_{(m_r)}$ in the 
corresponding cohomology algebra. \par

\section{Characteristic Subalgebra in the Cohomology of General Matrix 
Singularities}
\label{S:sec7} 
\par
In the preceding section we have identified for the Milnor fibers $F_m$, 
$F_m^{(sy)}$, and $F_m^{(sk)}$ (for m = 2n), their cohomology and the 
decomposition of their homology using the Schubert decomposition.  We see 
how this applies  to the structure of Milnor fibers of general matrix 
singularities of each of these types.  \par
Let $M$ denoting any one of the three spaces of complex $m \times m$ 
matrices which are general $M_{m, m}(\C)$, symmetric $Sym_m(\C)$, or 
skew-symmetric $Sk_m(\C)$ with $m = 2n$.  Also, let $\cD_m$, resp. 
$\cD_m^{(sy)}$, or $\cD_m^{(sk)}$ denote the variety of singular matrices 
of the corresponding type.  We suppose that each type is defined by $H : M 
\to \C$, which denotes either the determinant $\det$ for $\cD_m$ or 
$\cD_m^{(sy)}$, or the Pfaffian $\Pf$ for $\cD_m^{(sk)}$.  \par  
\subsection*{Matrix Singularities of a Given Type} \hfill
\par
A matrix singularity of any of the given types is defined by a holomorphic 
germ $f_0 : \C^s, 0 \to M, 0$, and the singularity is defined by $X_0 = 
f_0^{-1}(\cV), 0$ where $\cV$  denotes the appropriate variety of singular 
matrices.  
We impose an additional condition on $f$ which can take several forms based 
on forms of $\cK$-equivalence preserving $\cV$.  There is the equivalence 
defined using the parametrized action by points in $\C^s$ of the group $G = 
GL_m(\C)$ acting by $C \mapsto A\cdot C\cdot A^T$ in the  symmetric or 
skew-symmetric cases.  For the general $m \times m$ matrix case, the 
action of $G = GL_m(\C)$ acting by left multiplication suffices for studying 
the Milnor fiber.  However, for the general equivalence studying the pull-back 
of $\cD_m$ the action is given by $G = GL_m(\C) \times GL_m(\C)$ acting 
by $C \mapsto A\cdot C\cdot B^{-1}$ .  We denote the equivalence for any 
of the general, symmetric, or 
skew-symmetric cases as $\cK_M$-equivalence.  The second equivalence 
allows the action of germs of diffeomorphisms of $\C^s \times M, (0, 0)$ of 
the form $\varphi(x, y) = (\varphi_1(x), \varphi_2(x, y))$ which preserve 
$\C^s \times \cV$, and is denoted $\cK_{\cV}$ equivalence.  The third is a 
subgroup of $\cK_{\cV}$ which preserves the defining equation of $\C^s 
\times \cV$,  $H \circ pr_M$, with $pr_M$ denoting projection onto $M$.  It 
is denoted $\cK_{H}$.  See for example \cite{DP2}, \cite{D2}, or \cite{D1} 
for more details about the groups of equivalence and their relations and the 
properties of germs which have finite codimension for one of these 
equivalences.  In particular, for the three classes of varieties of singular 
matrices, $\cK_{\cV}$ and $\cK_{M}$ equivalences agree.  \par
If $f_0$ has finite $\cK_{\cV}$-codimension, then it may be deformed to 
$f_t$ which is transverse to $\cV$ in a neighborhood $B_{\gevar}(0)$ of $0 
\in \C^s$ for $t \neq 0$.  Then it is shown in \cite{DM} that one measure of 
the vanishing topology of $X_0$ is by the\lq\lq singular Milnor fiber\rq\rq 
$\tilde{X}_t = f_t^{-1}(\cV) \cap B_{\gevar}(0)$.  It is homotopy equivalent 
to a bouquet of real spheres of dimension $s-1$.  If $s < \codim_M 
(\sing(\cV))$, then this is the usual Milnor fiber of $\cV_0$.   This condition 
requires $s < 4$, resp. $3$, resp. $6$, for the three types of matrices.  
\par 
In the special case that $\cV$ is a free divisor and holonomic in the sense of 
Saito \cite{Sa} and satisfies a local weighted homogeneity condition 
\cite{DM} or is a free divisor and H-holonomic \cite{D1}, then the singular 
Milnor number is given by the length of the normal space 
$N \cK_{H \, e} f_0$, which is a determinantal module.  \par
For the three classes of varieties of singular matrices, the varieties are not 
free divisors.  Nonetheless, when $s \leq \codim_M (\sing(\cV))$, Goryunov 
and Mond \cite{GM} give a formula for the Milnor number which adds a 
correction term for the lack of freeness given by an Euler characteristic of 
a Tor complex.  Instead, Damon-Pike \cite{DP3} give a formula valid for all 
$s$ but which is presently restricted to a limited range of matrices.  It is 
given by a sum of terms which are lengths of determinantal modules, based 
on placing the varieties in a tower of free divisors \cite{DP2}.  
\par
\subsection*{Cohomology Structure of Milnor Fibers of General Matrix 
Singularities} \hfill
\par
We explain how the results in earlier sections provide information about the 
cohomology of the Milnor fiber for a matrix singularity $X_0$ for all $s$.  
\par
We consider the defining equation $H : \C^N, 0 \to \C, 0$ for $\cV$, where 
$M \simeq \C^N$ for each case.  For $\cV$ there exists $0 < \gd << \eta$ 
such that for balls $B_{\gd} \subset \C$ and$B_{\eta} \subset \C^N$  (with 
all balls centered $0$), we let $\cF_{\gd} = H^{-1}(B_{\gd}) \cap B_{\eta}$ 
so $H : \cF_{\gd} \to B_{\gd}$ is the Milnor fibration of $H$, with Milnor fiber 
$\cV_w = H^{-1}(w) \cap B_{\eta}$ for each $w \in B_{\gd}$.  By continuity, 
there is an $\gevar > 0$ so that $f_0(B_{\gevar}) \subset 
\cF_{\gd}$.  By possibly shrinking all three values, 
$H \circ f_0 : f_0^{-1}(\cF_{\gd}) \cap  B_{\gevar} \to B_{\gd}$ is the 
Milnor fibration of $H \circ f_0$.  Also, by the parametrized transversality 
theorem, for almost all $w \in B_{\gd}$, $f_0$ is transverse to $\cV_w$ and 
so the Milnor fiber of $H \circ f_0$ is given by $$X_w \,\, = \,\, (H \circ 
f_0)^{-1}(w) \cap B_{\gevar} \,\, = \,\, f_0^{-1}(\cV_w) \cap B_{\gevar}\, 
.$$  
\par
Thus, if we denote $f_0 | X_w = f_{0, w}$, then in cohomology with 
coefficient ring $R$, $f_{0, w}^* : H^*(\cV_w ; R) \to H^*(X_w ; R)$.  For any 
of the three types of matrices with $(*)$ denoting $( )$ for general matrices, 
$(sy)$ for symmetric matrices, or $(sk)$ for skew-symmetric matrices, we let
 $$\cA^{(*)}(f_0; R) \,\, \overset{def}{=} \,\, f_{0, w}^* (H^*(\cV_w ; R))\, ,$$
 which we refer to as the {\em characteristic subalgebra} of the cohomology 
of the Milnor fiber $H^*(X_w ; R)$ of $X_0$.  This is an algebra over $R$, and 
the cohomology of the Milnor fiber of the matrix singularity $X_0$ is a graded
module over $\cA^{(*)}(f_0; R)$ (both with coefficients $R$).  \par
 By Theorems \ref{Thm6.1} and 
\ref{Thm6.14} for the $m \times m$ general case or skew-symmetric 
case (with $m = 2n$),  for $R = \Z$-coefficients (and hence 
for any coefficient ring $R$) $\cA^{(*)}(f_0; R)$ is the quotient ring of a 
free exterior $R$-algebra on generators $e_{2j-1}$, for $j = 2, 3, \dots , m$, 
resp. $e_{4j-3}$ for $j = 2, 3, \dots , n$.  
For the $m \times m$ symmetric case there are two important cases where 
either $R = \Z/2\Z$ or is a field of characteristic zero.  In the first case, by 
Theorem \ref{Thm6.15}, $\cA^{(*)}(f_0; \Z/2\Z)$ is the quotient ring of a 
a free exterior algebra on generators $e_j = w_j(\tilde{E}_m)$, for $j = 2, 3, \dots , m$, 
for $w_j(\tilde{E}_m)$ the Stiefel-Whitney classes of the real oriented 
$m$-dimensional vector bundle $\tilde{E}_m$ on the Milnor fiber of $\cD_m^{(sy)}$.  
Hence, $\cA^{(*)}(f_0; \Z/2\Z)$ is a subalgebra generated by the 
Stiefel-Whitney classes of the pull-back vector bundle $f_{0, w}^*(\tilde{E}_m)$ on 
$X_w$.  \par
For the coefficient ring $R = \bk$ a field of characteristic zero, the 
symmetric case breaks-up into two cases depending on whether $m$ is even 
or odd (see \cite[(2),Thm. 6.7, Chap. 3]{MT} or Table 1 of \cite{D3}).  
\begin{equation}
\label{Eqn7.1}
  H^*(F_m^{(sy)}; \bk) \,\,  \simeq \,\, \begin{cases}
\gL^*\bk \langle e_5, e_9, \dots , e_{2m-1} \rangle \,  & \text{ if $m = 
2k+1$ }  \\
\gL^*\bk \langle e_5, e_9, \dots , e_{2m-3} \rangle \{1, e_m\}& \text{if 
$m = 2k$ } \, 
\end{cases}
\end{equation}
where $e_m$ is the Euler class of $\tilde{E}_m$. Hence, in both cases they are 
graded modules over an exterior algebra.  Hence, the Milnor fiber of $X_0$ has 
cohomology over a field of characteristic zero which, via the characteristic subalgebra
 is a graded module over the exterior algebra in either case of \eqref{Eqn7.1}.  \par
We summarize these cases with the following.
\begin{Thm}
\label{Thm7.2}
Let $f_0 : \C^s, 0 \to M, 0$ be a matrix singularity of finite $\cK_M$-codimension 
for $M$ the space of $m \times m$ matrices which are either general, symmetric, or 
skew-symmetric (with $m = 2n$).  Let $\cV$ denote the variety of singular matrices.  
Then, 
\begin{itemize}
\item[i)] The cohomology (with coefficients in a ring $R$) of the Milnor fiber 
of $X_0 = f_0^{-1}(\cV)$ has a graded module structure over the characteristic 
subalgebra $\cA^{(*)}(f_0; R)$ of $f_0$.  
\item[ii)]  In the general and skew-symmetric cases, $\cA^{(*)}(f_0; R)$ is a quotient of the free $R$-exterior algebra with generators given in 
Theorems \ref{Thm6.1} and \ref{Thm6.14} . 
\item[iii)]  In the symmetric case with $R = \Z/2\Z$, $\cA^{(sy)}(f_0; \Z/2\Z)$ 
is the quotient of the free exterior algebra over $\Z/2\Z$ on the Stiefel-Whitney classes of the real oriented vector bundle $\tilde{E}_m$ on the Milnor fiber of $\cV$.
\item[iv)]  In the symmetric case with $R = \bk$, a field of characteristic 
zero, $\cA^{(sy)}(f_0; \bk)$ is a quotient of the $\bk$-algebras in each of the cases 
in \eqref{Eqn7.1}.
\end{itemize}
\end{Thm}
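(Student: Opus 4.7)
The plan is to deduce Theorem \ref{Thm7.2} directly from the cohomology computations of Theorems \ref{Thm6.1}, \ref{Thm6.14}, and \ref{Thm6.15} (together with the description \eqref{Eqn7.1} in the characteristic-zero symmetric case). The central mechanism is the restriction $f_{0, w} = f_0 | X_w : X_w \to \cV_w$, which induces a graded ring homomorphism $f_{0, w}^* : H^*(\cV_w; R) \to H^*(X_w; R)$ whose image is, by definition, the characteristic subalgebra $\cA^{(*)}(f_0; R)$.

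First I would identify $H^*(\cV_w; R)$ with the cohomology of the appropriate global Milnor fiber. Since $H$ ($=\det$ or $\Pf$) is weighted homogeneous and $\cV = H^{-1}(0)$, the local Milnor fiber $\cV_w$ of $H$ is diffeomorphic to the global Milnor fiber $F_m^{(*)}$, as recalled in \S \ref{S:sec1}. Via this identification, Theorems \ref{Thm6.1}, \ref{Thm6.14}, and \ref{Thm6.15} describe $H^*(\cV_w; R)$ as the stated exterior algebra (or, in the characteristic-zero symmetric case with $m$ even, the module described in \eqref{Eqn7.1}).

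For part (i), the conclusion is immediate from the general fact that a cohomology ring is a graded module over any subring, applied to $\cA^{(*)}(f_0; R) \subset H^*(X_w; R)$ via cup product. For part (ii), since $f_{0, w}^*$ is a ring homomorphism, its image is the quotient of the source by its kernel; in the general and skew-symmetric cases the source is a free exterior $R$-algebra on the generators listed, so the image is a quotient of that exterior algebra. For part (iii), I would invoke the naturality of Stiefel-Whitney classes: $f_{0, w}^* (w_j(\tilde{E}_m)) = w_j(f_{0, w}^*(\tilde{E}_m))$, and by Theorem \ref{Thm6.15} the classes $w_j(\tilde{E}_m)$ generate $H^*(F_m^{(sy)}; \Z/2\Z)$ as an exterior algebra, so their pull-backs generate $\cA^{(sy)}(f_0; \Z/2\Z)$ and this subalgebra is a quotient of the free exterior $\Z/2\Z$-algebra on the Stiefel-Whitney classes of $f_{0, w}^*(\tilde{E}_m)$. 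Part (iv) is the same argument applied to \eqref{Eqn7.1}, using additionally that the Euler class $e_m$ is natural under pull-back.

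The main obstacle is not conceptual but bookkeeping: one must check that the characteristic subalgebra is well-defined independently of the choice of regular value $w \in B_{\gd}$. This follows from local triviality of the Milnor fibration $H : \cF_{\gd} \to B_{\gd}$ combined with the continuity of $f_0$, which supply canonical isomorphisms among the $H^*(\cV_w; R)$ as $w$ varies over the connected base, compatible with $f_{0, w}^*$. Once this compatibility is recorded, the remainder of the theorem is essentially a direct transfer of the exterior-algebra structure of the global Milnor fiber cohomology through the ring homomorphism $f_{0, w}^*$.
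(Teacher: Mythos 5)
Your proposal is correct and follows essentially the same route as the paper: the paper's argument is precisely the discussion preceding the theorem statement, which sets up the Milnor fibrations of $H$ and $H\circ f_0$, identifies $X_w = f_0^{-1}(\cV_w)\cap B_{\gevar}$ via parametrized transversality, defines $\cA^{(*)}(f_0;R)$ as the image of the ring homomorphism $f_{0,w}^*$, and then reads off the structure from Theorems \ref{Thm6.1}, \ref{Thm6.14}, \ref{Thm6.15} and \eqref{Eqn7.1}, with naturality of Stiefel--Whitney and Euler classes handling the symmetric case. Your added remark on independence of the choice of regular value $w$ is a harmless refinement the paper leaves implicit.
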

\par
In light of this theorem there are several problems to be solved for 
determining the cohomology of the Milnor fiber of the matrix singularity 
$X_0$ for coefficients $R$.  \par
\vspace{1ex} 
\flushpar
{\it Questions for the Cohomology of the Milnor Fibers of Matrix Singularities} \par
\begin{itemize}
\item[1)]  Determine the characteristic subalgebras as the images of the exterior algebras by determining which monomials map to nonzero elements in $H^*(X_w ; R)$.  
\item[2)]  Find the non-zero monomials in the image by geometrically identifying the 
pull-backs of the Schubert classes. 
\item[3)]  For the symmetric case with $\Z/2\Z$-coefficients, compute the 
Stiefel-Whitney classes of the pull-back of the vector bundle $\tilde{E}_m$.
\item[4)] Determine a set of module generators for the cohomology of 
the Milnor fibers as modules over the characteristic subalgebras.
\end{itemize}
\subsection*{Transversality to Schubert Cycles} \hfill
\par
We can give a first step for these using transversality.  We let $M$ denote 
one of the spaces of $m \times m$ matrices with variety of singular 
matrices denoted by $\cV$.  There is a transitive action on $SL_m(\C)$ on 
the global Milnor fibers of the varieties of singular matrices in all three 
cases.  We let $S_{\bm}^{(*)}$ denote the Schubert cell in the global Milnor 
fiber of the corresponding type.  For each Schubert class $S_{\bm}^{(*)}$ 
and $A \in SL_m(\C)$, we let $A\cdot S_{\bm}^{(*)}$ denote the image 
under the action of $A$.  Also, we let the germ $f_1 = A^{-1} \cdot f_0$ denote 
the germ obtained by applying the constant matrix $A^{-1}$ to $f_0(x)$ 
independent of $x$.  This action preserves the global Milnor fibers of $\cV$.  
Then, deforming either the Schubert cells or $f_0$ by multiplication by $A$ 
yields the following.
\begin{Lemma}
\label{Lem7.3}
Given $f_0 : \C^s, 0 \to M, 0$ of finite $\cK_M$-codimension, for 
almost all $A \in SL_m(\C)$ the germ $f_0$ is transverse to $A\cdot 
S_{\bm}^{(*)}$ for all Schubert cells $S_{\bm}^{(*)}$ in a Milnor fiber 
$\cV_w$ of $\cV$.  Then, for $f_1 = A^{-1} \cdot f_0$ and  $e_{\bm}^{\prime}$ 
the Poincar\'{e} dual to $[\overline{S_{\bm}^*}]$, $f_1^*(e_{\bm}^{\prime})$ is the Poincar\'{e} dual of $[f_1^{-1}(\overline{S_{\bm}^{(*)}})]$.  \par
Then, $f_1$ is $\cK_M$-equivalent to $f_0$, and $f_{0 w}^* = 
f_{1 w}^*$.
\end{Lemma}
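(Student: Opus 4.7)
The plan is to split the lemma into three largely independent steps: a parametric Thom argument for the transversality, naturality of Poincar\'{e} duality under transverse pull-back, and a connectedness argument for the equality of pull-backs.

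For Step 1, I would apply the parametric transversality theorem to the evaluation map
\[
F : B_{\gevar} \times SL_m(\C) \longrightarrow M, \qquad F(x,A) \,=\, A^{-1}\cdot f_0(x),
\]
where $A\cdot$ denotes the relevant $SL_m(\C)$-action (left multiplication in the general case, $C\mapsto ACA^T$ in the symmetric and skew-symmetric cases). Since there are only finitely many Schubert symbols, a common full-measure set of $A$ will handle all Schubert cells simultaneously, and transversality to $\overline{S_{\bm}^{(*)}}$ reduces to transversality to each stratum in its Schubert stratification via clause (c) of Theorems \ref{Thm3.5}, \ref{Thm4.5}, \ref{Thm4.13}. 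At a point $(x_0,A_0)$ with $y_0 = F(x_0,A_0)\in S_{\bm}^{(*)}$ I would verify that $dF$ surjects onto $T_{y_0}M$ in two pieces. Variation in $A$ along a one-parameter subgroup of $SL_m(\C)$ produces $\partial_A F$ whose image is the tangent space to the $SL_m$-orbit through $y_0$; by the transitivity of the action on each global Milnor fibre this image equals $T_{y_0}\cV_w$. For the two remaining real directions I would use the $SL_m$-invariance of $H$ (which in the three cases amounts to $\det(AC)=\det(C)$, $\det(ACA^T)=(\det A)^2\det C$, and $\Pf(ACA^T)=\det(A)\Pf(C)$), forcing $H\circ F(x,A)=H\circ f_0(x)$ and hence $dH\circ\partial_x F(x_0,A_0) = d(H\circ f_0)(x_0)\neq 0$, the inequality holding because $w$ is a regular value of $H\circ f_0$. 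Parametric transversality then gives $f_1 = A^{-1}\cdot f_0$ transverse to every $S_{\bm}^{(*)}$ for generic $A$.

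For Step 2, the transversality of $f_1$ to the stratified cycle $\overline{S_{\bm}^{(*)}}$ makes $f_1^{-1}(\overline{S_{\bm}^{(*)}})$ a stratified Borel-Moore cycle in $B_\gevar$ of the expected codimension, and the identity $f_1^*(e_{\bm}^{\prime}) = \mathrm{PD}\bigl[f_1^{-1}(\overline{S_{\bm}^{(*)}})\bigr]$ follows from the standard fibre-square naturality of Poincar\'{e} duality for Borel-Moore homology, of the same type already used in the proof of Proposition \ref{Prop1.1}.

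For Step 3, constant multiplication by $A^{-1}$ is a special instance of the group action defining $\cK_M$-equivalence in each matrix type (take $(A^{-1},I)$ in the general case and $A^{-1}$ in the symmetric and skew-symmetric cases), so $f_1$ is $\cK_M$-equivalent to $f_0$. Because $SL_m(\C)$ preserves $\cV_w$, the Milnor fibres of $H\circ f_0$ and $H\circ f_1$ coincide as subsets of $B_\gevar$, and on this common $X_w$ we have $f_{1w} = \alpha_{A^{-1}}\circ f_{0w}$ with $\alpha_{A^{-1}}:\cV_w\to\cV_w$ the self-diffeomorphism of the fibre induced by $A^{-1}$. A path from $A^{-1}$ to $I$ in the connected group $SL_m(\C)$ provides an isotopy of self-maps of $\cV_w$, so $\alpha_{A^{-1}}^*$ acts as the identity on $H^*(\cV_w;R)$, and $f_{0w}^* = f_{1w}^*$ follows. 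The only substantive obstacle will be the transversality calculation in Step 1: joint verification that the $A$-derivative sweeps $T_{y_0}\cV_w$ via transitivity, and that the $x$-derivative supplies the real codimension-two direction transverse to $\cV_w$ via regularity of $w$. The other two steps are essentially formal, relying on naturality of Poincar\'{e} duality and on path-connectedness of $SL_m(\C)$.
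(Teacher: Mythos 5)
Your proposal is correct and follows essentially the same route as the paper: parametric transversality via the transitive $SL_m(\C)$-action on the global Milnor fiber, a fiber-square argument identifying $f_{1}^*(e_{\bm}^{\prime})$ with the Poincar\'{e} dual of the preimage cycle, and a path $A_t$ in the connected group $SL_m(\C)$ giving both the $\cK_M$-equivalence and the equality $f_{0 w}^* = f_{1 w}^*$. The only difference is that you explicitly verify the submersivity of the evaluation map (orbit directions sweeping $T\cV_w$ plus the $H$-direction from regularity of $w$), a step the paper leaves implicit.
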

\begin{proof}
\par  
As $SL_m(\C)$ is path-connected, the action of $A$ is homotopic to the 
identity.  Let $A_t$ be such a path from $I_m$ to $A$.  Hence,  $[A_t\cdot 
\overline{S_{\bm}^{(*)}}] = [\overline{S_{\bm}^{(*)}}]$ for all $t$.  \par 
Next, by the parametrized transversality theorem and the transitive acton 
of $SL_m(\C)$ on the global Mlnor fiber, it follows that $f_0$ is transverse 
to $A\cdot \overline{S_{\bm}^{(*)}}$ for almost all $A \in SL_m(\C)$.  As 
there are only a finite number of Schubert cells, then for almost all $A$ this 
simultaneously holds for all of the Schubert cells $S_{\bm}^{(*)}$.  
For such an $A$ with $f_1 = A^{-1} \cdot f_0$, it follows that $f_1 = A \cdot f_0$ is 
transverse to all of the Schubert cells.  If $e_{\bm}^{\prime}$ denotes the Poincar\'{e} dual to $[\overline{S_{\bm}^{(*)}}]$, it is also the Poincar\'{e} dual to 
$[A\cdot\overline{S_{\bm}^{(*)}}]$.  Thus, by a fiber square argument 
$f_{1 w}^*(e_{\bm}^{\prime})$ is the Poincar\'{e} dual to 
$[f_{1 w}^{-1}(A\cdot \overline{S_{\bm}^{(*)}})]$.  \par  
Lastly, the family $f_t = A^{-1}_t \cdot f_0$ is a $\cK_M$-constant family so 
that $f_1 = A^{-1} \cdot f_0$ is $\cK_M$-equivalent  to $f_0$ and $f_{0 w}^* = 
f_{1 w}^*$. 
\end{proof}
\par
\begin{Remark}
\label{Rem7.4}
As a simple consequence of this lemma, we may replace $f_0$ by the $\cK_M$-equivalent $f_1 = A^{-1} \cdot f_0$ transverse to $\overline{S_{\bm}^{(*)}}$.  If $s < \frac{1}{2}\codim_{\R} (S_{\bm}^{(*)}) + 1$, then $f_{1 w}^{-1}(A\cdot \overline{S_{\bm}^{(*)}})$ is empty.  Hence $f_{0 w}^*(e_{\bm}^{\prime}) = 0$.  
\end{Remark} 
\par
\subsection*{Module Structure for  the Milnor Fibers} \hfill
\par
We make several remarks regarding these questions concerning the module structure.
  These involve two cases at opposite extremes, namely $s < \codim_M (\sing(X_0))$ or
 $f_0$ is the germ of a submersion.  In the first case when $s < \codim_M (\sing(\cV))$,
 $X_0$ has an isolated singularity, and the singular Milnor fiber for $f_0$ is the Milnor fiber for $X_0$, so the Milnor number and singular Milnor number agree.  Also, 
$f_{0 w}^*(e_{\bm}^{\prime}) = 0$ for all $e_{\bm}^{\prime}$ of positive degree; thus 
$\cA^{(*)}(f_0, R) = H^0(X_w; R) \simeq R$.  As the Minor fiber is homotopy equivalent 
to a CW-complex of real dimension $s-1$, the corresponding classes which occur for 
the Milnor fiber will have a trivial module structure over $\cA^{(*)}(f_0, R)$.  \par
Second, if $f_0$ is the germ of a submersion, then the Milnor fiber has the form $\cV_w \times \C^k$, where $k = s - \dim_{\C} M$ and so has the same cohomology, so   
we conclude that $f_0^* : H^*(\cV_w ; R)  \simeq H^*(X_w ; R)$ so $\cA^{(*)}(f_0, R) = H^*(X_w ; R)$.  Also, there are no singular vanishing cycles.  Thus, for these two cases there is the following expression for the cohomology of the Milnor fiber, where the 
second summand has trivial module structure shifted by degree $s-1$.
\begin{equation}
\label{Eqn7.12}
   \quad H^*(X_{w}; R) \, \simeq \cA^{(*)}(f_0, R) \oplus R^{\mu}[s-1]   
\end{equation}
where $\mu = \mu_{\cV}(f_0)$ for $\cV = \cD_m^{(*)}$ the corresponding variety of singular matrices.  \par
We ask whether this holds in general or at least for a large class of matrix singularities. 
\par 
\vspace{1ex} 
\flushpar
{\it Question: How generally valid is \eqref{Eqn7.12} for matrix singularities of the three types?} 
\par
\vspace{1ex} 
\par
For this question, we note that for the case of $2 \times 3$ complex 
matrices with $\cV$ denoting the variety of singular matrices and $s = 5$, 
the matrix singularities define Cohen-Macaulay $3$-fold singularities.  A 
stabilization of these singularities gives a smoothing and Milnor fiber.  In 
\cite[Thm. 8.4]{DP3} is given an algebraic formula for the vanishing Euler 
characteristic, which becomes the difference of the Betti numbers 
$b_3 - b_2$ of the Milnor fiber.  While specific calculations in the Appendix of 
\cite{DP3} show that the vanishing Euler characteristic typically increases in 
families with the $\cK_V$-codimension, it is initially not clear how this 
increase is distributed as changes of $b_3$ and $b_2$.  Surprisingly, 
Fr\"{u}hbis-Kr\"{u}ger and Zach \cite{FZ}, \cite{Z} show that for a large class of 
such singularities that $b_2 = 1$.  This suggests it may be possible to 
identify certain classes of $m \times m$ matrix singularities for which there are 
contributions from $\cA^{(*)}(f_0, R)$ for the topology of the Milnor fiber.  This is 
a fundamental question whose answer along with the preceding ones will clarify 
our understanding of the full cohomology of the Milnor fibers of matrix singularities.
\par
\section{Extensions to Exceptional Orbit Varieties, Complements, and Links}
\label{S:sec8} 
\par
We indicate in this section how the methods of the previous sections can be 
extended to exceptional orbit hypersurfaces for prehomogeneous vector 
spaces in the sense of Sato, see \cite{So} and \cite{SK}.  This includes 
equidimensional prehomogeneous spaces, see \cite{D3}, in the cases of both 
block representations of solvable linear algebraic groups \cite{DP2} and the 
discriminants for quivers of finite type in the sense of Gabriel, see \cite{G}, 
\cite{G2}, represented as linear free divisors by Buchweitz-Mond \cite{BM}.  \par 
Second, we can also apply the preceding methods to the complements of 
exceptional orbit hypersurfaces arising as the varieties of singular $m 
\times m$ matrices just considered and the equidimensional prehomogeneous 
spaces just described.  Third, in \cite{D3}, the cohomology of the link of one 
of these singularities is computed as a shift of the (co)homology of the 
complement.  Thus, the Schubert classes for the complement correspond to 
cohomology classes in the link. However, we explain how the multiplicative 
cohomology structure of the complement contains more information than the 
cohomology of the link. \par
\subsection*{Exceptional Orbit Hypersurfaces for the Equidimensional Cases} 
\hfill
\par
\subsubsection*{Block Representations of Linear Solvable Algebraic Groups} 
\hfill
\par
First, for the case of block representations of solvable linear algebraic 
groups, in \cite[Thm 3.1]{DP} the complement was shown to be a 
$K(\pi, 1)$-space where $\pi$ is a finite extension of $\Z^n$ (for $n$ the rank of 
the solvable group) by the finite isotropy group of the action on the open orbit.  
The solvable group is an extension of an algebraic torus by a unipotent group 
which is contractible.  The resulting cell decomposition follows from that for 
the torus times the unipotent group.  Thus, the decomposition is that modulo 
the finite group.  In important cases of (modified) Cholesky-type 
factorization for the three types of matrices and also $m \times (m+1)$ 
matrices the finite group is either the identity or $(\Z/2\Z)^n$ and the 
resulting quotient is shown, see \cite[Thm 3.4]{DP}, to still be the extension 
of a torus by a (contractible) unipotent group.  \par 
Thus, for these cases the cell decomposition follows from the product 
decomposition for the complex torus times the unipotent group, which has as 
a compact model a compact torus of the same rank.  Moreover, by 
\cite[Thm 4.1]{DP}, the cohomology with complex coefficients is an exterior 
algebra which has as generators $1$-forms defined from the defining 
equation of the exceptional orbit hypersurface.  \par
Also, by \cite[Thm 3.2]{DP} the Milnor fiber is again a 
$K(\pi^{\prime}, 1)$-space with $\pi^{\prime}$ a subgroup of $\pi$ (for the 
complement) with quotient $\Z$.  Again, by \cite[Thm 3.4]{DP} for the cases 
of (modified) Cholesky-type factorization of matrices, it is also true that the 
Milnor fiber for these cases is the extension of a torus, except of one lower 
rank, by the unipotent group.  Likewise the cohomology with complex 
coefficients of the Milnor fiber is again an exterior algebra which has one 
fewer generator, as the result of a quotient by a single specified relation.  
\par
\subsubsection*{Discriminants of Quivers of Finite Type} \hfill
\par
The quivers are defined by a finite ordered graph $\gG$ having for 
each vertex $v_i$ a space $\C^{n_i}$ and for each directed edge from 
$v_i$ to $v_j$ a linear map $\varphi_{i j} : \C^{n_i} \to \C^{n_j}$.  Those 
quivers of finite type were classified by Gabriel \cite{G}, \cite{G2}.  The 
discriminants for the quiver spaces of finite type were shown by 
Buchweitz-Mond \cite{BM} to be linear free divisors.  As such these discriminants 
are exceptional orbit hypersurfaces for the action of the group $G = ( \prod_{i = 
1}^{k} GL_{n_i}(\C)) / \C^*$ where $k = | \gG |$.    Since each 
$GL_{n_i}(\C)$ topologically factors as $SL_{n_i}(\C) \times \C^*$, then the 
complement is diffeomorphic to $(\prod_{i = 1}^{k} SL_{n_i}(\C)) \times 
(\C^*)^{k-1}$.  The earlier results for the Schubert decomposition for each 
$SL_{n}(\C)$ via its maximal compact subgroup $SU_n$ and the product cell 
decomposition for $(\C^*)^{k-1}$ gives a {\em product Schubert cell 
decomposition} for the complement. \par
The Milnor fiber has an analogous form $( \prod_{i = 1}^{k} SL_{n_i}(\C)) 
\times (\C^*)^{k-2}$, and a {\em product Schubert cell decomposition} for 
the Milnor fiber.  \par 
The cohomology of the complement is given by \cite[(5.11)]{D3} as an 
exterior algebra on a specific set of  generators.  The cohomology of the 
Milnor fiber is also an exterior algebra except with 
one fewer degree $1$ generator, see \cite[(Thm 5.4)]{D3}.  Furthermore, by 
Theorem \ref{Thm6.1} relating the Schubert decomposition for $SL_{n}(\C)$ 
via its maximal compact subgroup $SU_n$ with the cohomology classes, we 
conclude that for both the complement and the Milnor fiber of the 
discriminant of the space of quivers, the closures of the product Schubert 
cells provide a set of generators for the homology.  \par
\subsection*{Complements of the Varieties of Singular Matrices} \hfill
\par
We can likewise give a Schubert decomposition for the complements of the 
varieties of $m \times m$ matrices which are general, symmetric or 
skew-symmetric.  We note that in \cite{D3} the complements were given as 
$GL_m(\C)$ for the general matrices, $GL_m(\C)/O_m(\C)$ for the 
symmetric matrices, and $GL_{2n}(\C)/Sp_n(\C)$ for the skew-symmetric 
case with $m = 2n$.  These have as compact models the symmetric spaces 
$U_m$, resp. $U_m/O_m$, resp. $U_{2n}/Sp_n$.  Each of these 
has a Schubert decomposition given in \cite{KM}.  As remarked in \S 
\ref{S:sec3}, $U_m$ has a Schubert decomposition by cells $S_{\bm}$ for 
$\bm = (m_1, m_2, \dots , m_r)$, where $m_1$ may equal $1$ and it is not 
required that  $\sum_{i = 1}^{r} \theta_i \equiv 0 \, \mod \, 2\pi$. \par 
 Second, in \cite[\S 5]{KM} is given a Schubert decomposition for 
$U_m/O_m$ using for the symmetric Schubert cell $S_{\bm}^{(sy)}$ the 
symmetric factorization into pseudo-rotations except again $\bm^{(sy)} = 
(m_1, m_2, \dots , m_r)$, where $m_1$ may equal $1$ and it is not required 
that $\sum_{i = 1}^{r} \theta_i \equiv 0 \, \mod \, \pi$.  \par
Third,  in \cite[\S 7]{KM} is given a Schubert decomposition for for 
$U_{2n}/Sp_n$ using for the skew-symmetric Schubert cell 
$S_{\bm}^{(sk)}$ the skew-symmetric factorization into pseudo-rotations 
except again $\bm^{(sk)} = (m_1, m_2, \dots , m_r)$, where $m_1$ may 
equal $1$ and  it is not required that  $\sum_{i = 1}^{r} \theta_i \equiv 0 \, 
\mod \, 2\pi$. \par  
In the case of $U_m$ and $U_{2n}/Sp_n$ the cohomology with integer 
coefficients is an exterior algebra with an added generator of degree $1$; 
and for $U_m/O_m$ the cohomology with $\Z/2\Z$ coefficients is an 
exterior algebra with an added generator of degree $1$.  Hence, a counting 
argument analogous to that for the Milnor fibers show that the closure of 
each Schubert class gives a homology generator for the complement.
\subsubsection*{Complements of the Varieties of Singular $m \times n$ 
Matrices} \hfill
\par
The varieties of singular $m \times n$ complex matrices, $\cV_{m, n}$, with 
$m \neq n$ were not considered earlier because they do not have Milnor 
fibers.  However, the methods do apply to the complement and link as a 
result of work of J. H. C. Whitehead \cite{W}.  Let $M = M_{m, n}(\C)$ denote 
the space of $m \times n$ complex matrices.  We consider the case where 
$m > n$.  The other case $m < n$ is equivalent by taking transposes.  The 
left action of $GL_m(\C)$ acts on $M$ with an open orbit consisting of the 
matrices of rank $n$.  This is the complement to the variety $\cV_{m, n}$ 
of singular matrices and can be described as the ordered set of $n$ 
independent vectors in $\C^m$.  Then, the Gram-Schmidt procedure replaces 
them by an orthonormal set of $n$ vectors in $\C^m$.  This is the Stiefel 
variety $V_n(\C^m)$ and the Gram-Schmidt procedure provides a strong 
deformation retract of the complement $M \backslash \cV_{m, n}$  onto the 
Stiefel variety $V_n(\C^m)$.  Thus, the Stiefel variety is a compact model 
for the complement.  Whitehead \cite{W} computes both the (co)homology of 
the Stiefel variety using a Schubert decomposition which he gives.  The 
cohomology for integer coefficients of the complement of the variety 
$\cV_{m, n}$ is given by: 
\begin{equation}
\label{Eqn8.1}
 H^*(M_{m, n} \backslash \cV_{m, n}; \Z) \,\, \simeq \,\, \gL^*\Z\langle 
e_{2(m-n)+1}, e_{2(m-n)+3}, \dots , e_{2m-1}\rangle   
\end{equation}
with degree of $e_j$ equal to $j$.  
Again the Schubert decomposition gives for the closure of each Schubert cell 
a homology generator.  
\subsection*{Cohomology of the Links and Schubert Decomposition of the 
Complement} \hfill
\par
Consider an exceptional orbit variety $\cE$ of a prehomogeneous vector 
space $V$ of $\dim_{\C} V = N$.  Suppose there is a compact manifold $K 
\subset V \backslash \cE$ oriented for a coefficients field $\bk$, which is a 
compact model for the complement $V \backslash \cE$.  Then the 
cohomology of the link $L(\cE)$ is given, see \cite[Prop. 1.9]{D3}, by the 
following formula \par
\vspace{1ex} 
\flushpar
{\em Cohomology of the Link $L(\cE)$: } \par
\begin{equation}
\label{Eqn8.2}   
\widetilde{H}^*(L(\cE); \bk) \,\, \simeq   \widetilde{H^*(K; \bk)}\left[ 2N - 2 
- \dim_{\R} K \right] \, , 
\end{equation}
where the graded vector space 
$\widetilde{H^*(X; \bk)}\left[ r\right]$ will denote the vector space $H^*(X; 
\bk)$, truncated at the top degree and shifted upward by degree $r$.  
Furthermore, to a basis of vector space generators of $H_q(K; \bk)$, $q < 
\dim_{\R}K$, there corresponds by Alexander duality a basis of vector space 
generators of $H^{2N-2-q}(K; \bk)$.  \par
As a consequence of this and the preceding established relations between 
the Schubert decomposition (or product Schubert decomposition) of the 
complement and the homology, we obtain the following conclusions.
\begin{Thm}
\label{Thm8.4}
For the following exceptional orbit varieties $\cE$ there are the following 
relations between the Schubert (or product Schubert) decomposition for a 
compact model of the complement and the cohomology of the link obtained 
by shifting the cohomology of the compact model (for coefficients a field of 
characteristic zero $\bk$ unless otherwise stated).  
\begin{itemize}
\item[1)]  For the equidimensional solvable case for (modified) 
Cholesky-type factorizations of $m \times m$ matrices of all three types or 
$(m + 1) \times m$ matrices, the cohomology of the link is given by the 
shifted cohomology of the compact model torus, see \cite[Thm 4.5]{D3}.  
The closures of the cells of the product cell decomposition of nonmaximal 
dimension give a homology basis which correspond to the cohomology basis 
of the link after the shift.  
\item[2)] For the discriminant of the quiver space for a quiver of finite type, 
the cohomology of the link is the shifted cohomology of the compact model 
described above with shift given by \cite[Thm. 5.4]{D3}.  The closures of 
cells of the product Schubert decomposition of nonmaximal dimension for 
the complement give a homology basis which correspond after the shift to 
the cohomology basis for the link.
\item[3)] For the varieties of singular $m \times m$ complex matrices, in 
the general case or the skew-symmetric case with $m$ even, the cohomology of 
the link is the shifted cohomology of the compact 
symmetric spaces $U_m$, resp. $U_{2n}/Sp_n$ ($m = 2n$) given above with 
shift given in \cite[Table 2]{D3}.  The closures of the Schubert cells of 
nonmaximal dimension in each case give a homology basis which corresponds 
to the cohomology basis of the link after the shift.  
\item[4)] For the varieties of singular $m \times m$ complex symmetric 
matrices, the shifted cohomology of $H^*(U_m/O_m ; \Z/2\Z)$, described 
above, gives the cohomology of the link for $\Z/2\Z$-coefficients, where the 
shift is $\binom{m+1}{2} -2$.  The closures of the Schubert cells of 
nonmaximal dimension in the Schubert decomposition give a basis of 
$\Z/2\Z$-homology classes corresponding to the cohomology basis of the 
link after the shift.  For coefficients in a field $\bk$ of characteristic zero, the 
cohomology of $U_m/O_m$, is an exterior algebra which depends on whether 
$m$ is even or odd and the shifts are given in \cite[Table 2]{D3}, without a 
direct relation with the Schubert decomposition.  
\item[5)]
For the variety of singular $m \times n$ complex matrices, $\cV_{m, n}$ 
(with $m > n$), the cohomology of the compact model, the Stiefel variety 
$V_n(\C^m)$, for the complement is given by \eqref{Eqn8.1}.  The 
cohomology of the link is given in \eqref{Eqn8.2} as the upper truncated and 
cohomology $H^*(M_{m, n} \backslash \cV_{m, n},\bk)$ shifted by $n^2-2$ 
(as a graded vector space).  The closures of the Schubert cells of 
nonmaximal dimension give a homology basis for the cohomology of the link 
after the shift.
\end{itemize}
\end{Thm}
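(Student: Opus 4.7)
The plan is to prove each of the five parts by combining equation \eqref{Eqn8.2} for the cohomology of the link with the Schubert (or product Schubert) decompositions for the corresponding compact model $K$ of the complement, together with Alexander duality in $V \cong \C^N$ between homology of $K$ and cohomology of the link. The unifying observation is that in every case listed, the Schubert decomposition has already been shown (earlier in this paper or in \cite{KM}, \cite{W}, \cite{D3}, \cite{DP}) to produce a vector space basis for $H_*(K;\bk)$ (or $H_*(K;\Z/2\Z)$ in the symmetric case). Thus the work reduces to: (a) identifying the compact model $K$; (b) recording the shift $2N-2 - \dim_{\R}K$; and (c) transporting the Schubert homology basis to a cohomology basis of the link via the Alexander duality isomorphism.

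For case 1), by \cite[Thm 3.4]{DP} the compact model is a compact torus of the appropriate rank, with product cell decomposition and exterior-algebra cohomology (on degree-one generators) described in \cite[Thm 4.5]{D3}; the closures of the non-top product cells correspond under Alexander duality to the cohomology basis of the link given by \eqref{Eqn8.2}. Case 2) proceeds similarly: each $GL_{n_i}(\C)$ splits topologically as $SL_{n_i}(\C)\times \C^*$, so the complement has compact model $\prod_{i=1}^k SU_{n_i} \times T^{k-1}$ for a compact torus $T^{k-1}$, and the product of the Schubert decompositions given by Theorem~\ref{Thm6.1} with the standard cell decomposition of the torus yields a product Schubert decomposition whose non-top-dimensional closures match the shifted cohomology of the link computed in \cite[Thm 5.4]{D3}.

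For cases 3)--5) the compact models are the symmetric spaces $U_m$, $U_{2n}/Sp_n$, $U_m/O_m$ and the Stiefel manifold $V_n(\C^m)$, whose Schubert decompositions were given by Whitehead, Miller, Yokota and Kadzisa--Mimura (as recalled in \S\ref{S:sec3}--\S\ref{S:sec4}, with the mild modification noted in Remark~\ref{Rem3.6} that for $U_m$, $U_m/O_m$, $U_{2n}/Sp_n$ we also allow Schubert symbols beginning with $m_1 = 1$, producing an additional exterior-algebra generator of degree one). In each case a dimension count exactly analogous to that used in Theorems~\ref{Thm6.1}, \ref{Thm6.14}, \ref{Thm6.15} shows that the number of Schubert cells in each real dimension $q$ equals $\rk H_q(K; R)$, so the Schubert closures give a free $R$-basis of $H_*(K;R)$ (with $R = \Z/2\Z$ in case 4) and $R = \bk$ or $\Z$ otherwise). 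Equation \eqref{Eqn8.2} with the shifts recorded in \cite[Table 2]{D3} then converts this basis into the cohomology basis of the link.

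The main obstacle, and the feature that forces the restriction to non-maximal Schubert cells, is the top-degree truncation built into \eqref{Eqn8.2}: the fundamental class of $K$ itself has no counterpart in the link, so the matching of bases is only a bijection once the top Schubert cell is removed. To turn this into a clean correspondence I would use Alexander duality in $V\cong \C^N$: since $K \hookrightarrow V\setminus \cE$ is a homotopy equivalence, $H_q(V\setminus \cE;\bk) \cong H_q(K;\bk)$, and Alexander duality pairs this with $\widetilde{H}^{2N-1-q}(L(\cE);\bk)$. Combining this with the Schubert basis established above gives parts 1)--3) and 5), and the $\Z/2\Z$ statement of part 4). The final sentence of 4) (rational coefficients in the symmetric case) is not a positive assertion but rather an explicit disclaimer: the rational cohomology of $U_m/O_m$ depends on the parity of $m$ (cf.\ \eqref{Eqn7.1}) and is not generated by Schubert cycles, so no direct Schubert correspondence is claimed, in parallel with the familiar real Grassmannian phenomenon discussed in \S\ref{S:sec6}.
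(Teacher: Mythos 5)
Your proposal follows the paper's own route exactly: Theorem \ref{Thm8.4} is presented there as a direct consequence of formula \eqref{Eqn8.2} (Alexander duality, quoted from \cite[Prop.~1.9]{D3}) together with the previously established facts that the Schubert (or product Schubert) closures give homology bases for the compact models of the complements in each case, and that the top-degree truncation in \eqref{Eqn8.2} accounts for discarding the maximal cell --- which is precisely the argument you give, including the reading of the last sentence of 4) as a disclaimer rather than a claim. (One small index slip: Alexander duality in $S^{2N-1}$ pairs $H_q$ of the complement with $\widetilde{H}^{2N-2-q}(L(\cE);\bk)$, not $\widetilde{H}^{2N-1-q}$, consistent with the shift $2N-2-\dim_{\R}K$ that you correctly record elsewhere.)
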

\par
\subsection*{Complements of the Varieties of Matrix Singularities} \hfill
\par
Given a matrix singularity $f_0 : \C^s, 0 \to M, 0$ with $\cV \subset M$ the 
variety of singular matrices and $X_0 = f_0^{-1}(\cV)$.  Here $M$ can 
denote any of the spaces of matrices and of any sizes.  In the preceding, we 
indicated how the cohomology of the link $L(\cV)$ is expressed as an upper 
truncated and shifted cohomology of the complement $M \backslash \cV$.  
Because of the shift, we showed in \cite{D3} that the cohomology product 
structure is essentially trivial.  Thus, the link is a stratified real analytic set 
whose structure depends upon much more than just the group structure of 
the (co)homology.  On the other hand, we showed in \cite{D3} that the 
cohomology structure of the complement is an exterior algebra, and hence 
contributes considerably more that just the vector space structure of the 
cohomology of the link.  This extra cohomology structure captures part of 
the additional structure. \par
Consequently, for the matrix singularity, using the earlier notation, we note 
that there is a map of complements $f_0 : (B_{\gevar} \backslash X_0) \to 
(B_{\gd} \backslash \cV)$.  Also, $B_{\gd} \backslash \cV \simeq  M 
\backslash \cV$, which has a compact model given by either a symmetric 
space or a Stiefel manifold.  Thus, the cohomology of the complement 
$H^*(B_{\gevar} \backslash X_0 ; R)$ is a module over the characteristic 
subalgebra which is the image of $H^*(B_{\gd} \backslash \cV ; R)$ under 
$f_0^*$.  In turn, this is an exterior algebra.  Hence, the multiplicative 
structure considerably adds to the group structure that would result from 
the link.  This is just as for the Milnor fiber described earlier.

\end{document}